\newcounter{hypC}
\newenvironment{hypC}{\refstepcounter{hypC}\begin{itemize}
  \item[{\bf C\arabic{hypC}}]}{\end{itemize}}
\newcounter{hypA}
\newenvironment{hypA}{\refstepcounter{hypA}\begin{itemize}
  \item[{\bf A\arabic{hypA}}]}{\end{itemize}}
\newcounter{hypAver}
\newenvironment{hypAver}{\refstepcounter{hypAver}\begin{itemize}
  \item[{\bf AVER\arabic{hypAver}}]}{\end{itemize}}
\newtheorem{theo}{Theorem}[section]
\newtheorem{lemma}[theo]{Lemma}
\newtheorem{prop}[theo]{Proposition}
\newtheorem{remark}[theo]{Remark}
\newcommand{\eqdef}{\ensuremath{\stackrel{\mathrm{def}}{=}}}
\def\eqsp{\;}
\def\Rset{\mathbb R}
\newcommand{\plim}{\ensuremath{\stackrel{\mathbb{P}}{\longrightarrow}}}
\newcommand{\aslim}{\ensuremath{\stackrel{\text{a.s.}}{\longrightarrow}}}
\def\PE{\mathbb E}
\def\PP{\mathbb P}
\def\F{\mathcal{F}}
\def\A{\mathcal{A}}
\def\K{\mathcal{K}}
\newcommand{\un}{\ensuremath{\mathbbm{1}}}
\def\Id{\mathrm{Id}}
\def\btheta{\bar{\theta}}
\def\hH{\widehat{H}}
\def\etal{et al.}
\def\ie{i.e.}
\begin{document}

\title{ Central Limit Theorems for Stochastic Approximation  \\
  with controlled Markov chain dynamics}
\author{Gersende Fort \\
  \small{ LTCI, CNRS \& TELECOM ParisTech,} \\
  \small{46 rue Barrault, 75634 Paris Cedex
  13, France. }  \\
  \small{gersende.fort@telecom-paristech.fr }}

\date{}

\maketitle

\keywords{Stochastic Approximation, Limit Theorems, Controlled Markov chain}

\begin{abstract}
  This paper provides a Central Limit Theorem (CLT) for a process $\{\theta_n,
  n\geq 0\}$ satisfying a stochastic approximation (SA) equation of the form
  $\theta_{n+1} = \theta_n + \gamma_{n+1} H(\theta_n,X_{n+1})$; a CLT for the
  associated average sequence is also established.  The originality of this
  paper is to address the case of controlled Markov chain dynamics $\{X_n,
  n\geq 0 \}$ and the case of multiple targets. The framework also accomodates
  (randomly) truncated SA algorithms.
  
  Sufficient conditions for CLT's to hold are provided as well as comments on
  how these conditions extend previous works (such as independent and
  identically distributed dynamics, the Robbins-Monro dynamic or the single
  target case). The paper gives a special emphasis on how these conditions hold
  for SA with controlled Markov chain dynamics and multiple targets; it is
  proved that this paper improves on existing works.
\end{abstract}

\paragraph{Acknowledgements.}
I gratefully acknowledge Prof. P. Priouret for fruitful discussions.

\newpage
\tableofcontents

\clearpage
\newpage

\section{Introduction} Stochastic Approximation (SA)
algorithms were introduced for finding roots of an unknown function $h$ (for
recent surveys on SA, see e.g.
\cite{chen:2002,spall:2003,kushner:yin:2003,borkar:2008,kushner:2010}).  SA
defines iteratively a sequence $\{\theta_n, n\geq 0\}$ by the update rule
\begin{equation}
  \label{eq:SAgeneral}
  \theta_{n+1} = \theta_n + \gamma_{n+1} \Xi_{n+1} \eqsp,
\end{equation}
where $\{\gamma_{n}, n\geq 1 \}$ is a sequence of deterministic step-size and
$\Xi_{n+1}$ is a random variable (r.v.) standing for a noisy measurement of the
unknown quantity $h(\theta_n)$.

\medskip

Our aim is to establish the rate of convergence of the sequence $\{\theta_n, n
\geq 0 \}$ to a limiting point $\theta_\star$ in the following framework.

Let $\Theta \subseteq \Rset^d$; the sequence $\{\theta_n, n\geq 0 \}$ is a
$\Theta$-valued random sequence defined on the filtered probability space
$(\Omega, \mathcal{A}, \PP, \{\F_n, n\geq 0 \})$ and given by
\[
\theta_{n+1} = \theta_n + \gamma_{n+1} \left(h(\theta_n) + e_{n+1} +r_{n+1}
\right) \eqsp, \qquad \theta_0 \in \Theta \eqsp;
\]
where $h: \Theta \to \Rset^d$ is a measurable function, $\{e_n, n\geq 1 \}$ is
a $\F_n$-adapted $\PP$-martingale increment sequence and $\{r_n, n \geq 1 \}$
is a vanishing $\F_n$-adapted random sequence. Such a general description
covers many SA algorithms: as discussed below (see
Section~\ref{sec:SAtheory:assumptions}), it covers the case when $\Xi_{n+1}$ is
of the form $H(\theta_n, X_{n+1})$ where $\{X_n,n \geq 1 \}$ are independent
and identically distributed (i.i.d.) r.v. such that (s.t.) $\PE\left[ H(\theta,
  X) \right] = h(\theta)$; and the more general case when $\{X_n, n\geq 1 \}$
is an adapted (non stationary) Markov chain with transition kernel driven by
the current value of the SA sequence $\{\theta_n, n\geq 0\}$. It also covers
the case of fixed truncated and randomly truncated SA algorithms \ie
situations when given a (possibly random) sequence of subsets $\{\K_n, n \geq 0
\}$ of $\Theta$, the update rule is given by
\begin{equation}
  \label{eq:truncatedSA:general}
  \theta_{n+1} = \left\{ 
  \begin{array}{ll}
\theta_n + \gamma_{n+1} \Xi_{n+1}\eqsp, & \ \text{if $\theta_n  + \gamma_{n+1} \Xi_{n+1} \in \K_{n+1}$} \\
\theta_0 & \ \text{otherwise \eqsp.}
  \end{array}
\right.
\end{equation}
Such a truncated algorithm is used for example to solve optimization problem on a
constraint set $\Theta$ (in this case, $\K_n = \Theta$ for any $n$), or to
ensure stability of the random sequence $\{\theta_n, n\geq 0 \}$ in situations
where the location of the sought-for root is unknown (in this case, $\K_n$ is
an increasing sequence of sets, see~\cite{chen:guo:gao:1988} and \cite[Chapter
2]{chen:2002}).

Our second aim is to extend the previous results to the case of multiple
targets: we provide asymptotic convergence rates of $\{\theta_n, n\geq 0\}$ to
a point $\theta_\star$ given the event $\{\lim_q \theta_q = \theta_\star \}$
for some $\theta_\star$ in the interior of $\Theta$. Note that this paper is
devoted to convergence rates so that sufficient conditions for the convergence
is out of the scope of the paper; for convergence, the interested reader can
refer to
\cite{benveniste:metivier:priouret:1987,duflo:1997,benaim:cours99,chen:2002,andrieu:moulines:priouret:2005,borkar:2008}.

\medskip 

The originality of this paper consists in deriving rates of convergence in a
new framework characterized by \textit{(i)} general assumptions on the noisy
measurement $\Xi_{n+1}$ of $h(\theta_n)$ which weaken the conditions in the
literature and \textit{(ii)} the multiple targets problem.  In
Section~\ref{sec:SAtheory:comment}, our framework will be carefully compared to
the literature.

We derive sufficient conditions on the step-size sequence $\{\gamma_n, n \geq 1
\}$, on the random sequences $\{e_n, r_n, n \geq 1 \}$ and on the limiting
point $\theta_\star$ so that $\gamma_n^{-1/2} (\theta_n - \theta_\star)$
converges in distribution under the conditional probability $\PP(\cdot \vert
\lim_q \theta_q = \theta_\star)$. The limiting distribution is a (mixture of)
centered Gaussian distribution(s) and this distribution is explicitly
characterized.  We also address the rate of convergence of the associated
averaged process $\{\btheta_n, n\geq 0 \}$ defined by
\begin{equation}
  \label{eq:averaged:sequence:def}
 \btheta_n \eqdef \frac{1}{n+1} \sum_{k=0}^{n} \theta_k \eqsp.
 \end{equation}
 We prove that this averaged sequence reaches the optimal rate and the optimal
 variance (in a sense discussed below); such a result was already established
 in the literature in a more restrictive framework.

 The paper is organized as follows. Section~\ref{sec:SAtheory} (resp.
 Section~\ref{sec:averSAtheory}) is devoted to the SA sequence $\{\theta_n,
 n\geq 0\}$ (resp. the averaged SA sequence $\{\btheta_n, n\geq 0 \}$). We
 successively introduce the assumptions, comment these conditions, compare our
 framework to the literature and state a Central Limit Theorem (CLT). In
 Section~\ref{sec:appli}, our results are applied to a randomly truncated SA
 algorithm with controlled Markov chain dynamics; since our conditions are
 quite weak, we are able to obtain better convergence rates than the rates
 obtained in Delyon~\cite{delyon:2000}.  All the proofs are postponed in
 Section~\ref{sec:proofs}.

\section{A Central Limit Theorem for Stochastic Approximation}
\label{sec:SAtheory}
\subsection{Assumptions} 
\label{sec:SAtheory:assumptions}
Let $\Theta \subseteq \Rset^d$.  We consider the $\Rset^d$-valued sequence
satisfying for $n \geq 0$,
\begin{equation}
  \label{eq:definition:theta}
  \theta_{n+1}=\theta_{n}+\gamma_{n+1}h(\theta_{n})+\gamma_{n+1}e_{n+1}+\gamma_{n+1}r_{n+1}
\eqsp, \qquad \qquad \theta_0 \in \Theta \eqsp;
\end{equation}
and we establish a Central Limit Theorem along sequences $\{\theta_n, n\geq
0\}$ converging to some point $\theta_\star \in \Theta$ which is a root of the
function $h$. We assume the following conditions on the attractive target
$\theta_\star$.
\begin{hypC}
\label{hyp:C1}
  \begin{enumerate}[(a)]
  \item \label{hyp:C1:a} $\theta_{\star}$ is in the interior of $\Theta$ and
    $h(\theta_{\star})=0$.
  \item \label{hyp:C1:b} The mean field $h:\Theta\to \Rset^{d}$ is measurable
    and twice continuously differentiable in a neighborhood of $\theta_\star$.
  \item \label{hyp:C1:c} The gradient $\nabla h(\theta_{\star})$ is a Hurwitz
    matrix. Denote by $-L$, $L>0$, the largest real part of its eigenvalues.
\end{enumerate}
\end{hypC}
Let $\{e_n, n\geq 1\}$ be a $\Rset^d$-valued random variables defined on the
filtered space $(\Omega, \mathcal{A},\PP, \{\F_n, n\geq 0\})$. We will denote
by $|\cdot|$ the Euclidean norm on $\Rset^d$; and by $x^T$ the transpose of a
matrix $x$. By convention, vectors are column-vectors. For a set $A$, $\un_A$
is the indicator function. It is assumed
\begin{hypC}\label{hyp:C3}
 \begin{enumerate}[(a)]
 \item \label{hyp:C3:a} \ $\{e_{n},\ n\geq 1\}$ is a $\F_n$-adapted
   $\PP$-martingale-increment sequence \ie $\PE\left[e_n \vert \F_{n-1}
   \right] = 0$ $\PP$-almost surely.
 \item \label{hyp:C3:b} For any $m \geq 1$, there exists a sequence of
   measurable sets $\{\A_{m,k}, k \geq 0 \}$ such that $\A_{m,k} \in \F_k$ and
   there exists $\tau >0$ such that 
    \[\sup_{k \geq 0} \PE \left[|e_{k+1}
      |^{2+\tau}\un_{\A_{m,k}} \right]<\infty \eqsp.
\]
In addition, for any $m \geq 1$, $\lim_k \un_{\A_{m,k}} \un_{\lim_q \theta_q =
  \theta_\star} = \un_{\A_m} \un_{\lim_q \theta_q = \theta_\star}$ and the
limiting set satisfies $\lim_m \PP(\A_m \vert \lim_q \theta_q = \theta_\star)
=1$.
\item \label{hyp:C3:c} $\PE \left[e_{k+1}e_{k+1}^{T}|\F_{k} \right] = U_\star +
  D_{k}^{(1)} + D_{k}^{(2)}$ where $U_\star$ is a symmetric positive
  definite (random) matrix and
 \begin{equation}
      \label{eq:var3}
   \left\{ \begin{array}{l}
D_k^{(1)} \aslim 0  \eqsp,  \qquad \ \text{on the set $\{\lim_q \theta_q =
  \theta_\star \}$}  \\
\lim_n \gamma_n \ \PE\left[ \left| \sum_{k=1}^n D_k^{(2)}  \right|  \un_{\lim_q \theta_q = \theta_\star} \un_{\A_m}\right]  = 0 \eqsp;
\end{array}
\right. \end{equation}
the sequence $\{\A_m, m\geq 1 \}$ is defined  in C\ref{hyp:C3}\ref{hyp:C3:b}.
 \end{enumerate}
\end{hypC} We will show (see remark~\ref{rem:relaxedD2} in Section~\ref{sec:proofs}) that
the condition on the r.v. $\{D_k^{(2)}, k\geq 1 \}$ can be replaced with:
$D_k^{(2)} = D_k^{(2,a)} + D_k^{(2,b)}$
 \begin{equation}
   \label{eq:NewAssumption:C2C}
   \lim_n \gamma_n \ \PE\left[ \left| \sum_{k=1}^n D_k^{(2,a)} \un_{\A_{m,k}}
    \un_{A_k} \right| +  \left| \sum_{k=1}^n D_k^{(2,b)}  \right|  \un_{\A_m}  \un_{\lim_q \theta_q = \theta_\star} \right] = 0 \eqsp, \quad \forall m \geq 1 \eqsp,
 \end{equation}
where $\{A_k, k \geq 1 \}$ is any $\F_k$-adapted sequence of sets satisfying
$\lim_k \un_{A_k} = \un_{\lim_q \theta_q = \theta_\star}$; and $\A_{m,k}$ is
given by C\ref{hyp:C3}\ref{hyp:C3:b}.

For a sequence of $\Rset^d$-valued r.v. $\{Z_n, n\geq 0\}$, we write $Z_n =
O_{w.p.1.}(1)$ if
$\sup_n |Z_n| < \infty$ w.p.1; 
and $Z_n = o_{L^p}(1)$ if $\lim_n \PE[|Z_n|^p] =0$. Let $\{r_n, n\geq 1\}$ be a
$\Rset^d$-valued random variables defined on the filtered space $(\Omega,
\mathcal{A},\PP, \{\F_n, n\geq 0\})$.
\begin{hypC}\label{hyp:C4}  $r_n$ is $\F_n$-adapted. $r_n = r_n^{(1)} + r_n^{(2)}$  with,   for any $m \geq 1$,
  \[
  \left\{
   \begin{array}{l}
\gamma_{n}^{-1/2} r_{n}^{(1)} \ \ \un_{\lim_{q}\theta_{q} =\theta_{\star}}  \un_{\A_m} =
  O_{w.p.1}(1) o_{L^1}(1) \eqsp, \\
\sqrt{\gamma_n} \sum_{k=1}^n r_k^{(2)} \ \ \un_{\lim_q \theta_q = \theta_\star} \un_{\A_m}= O_{w.p.1}(1) o_{L^1}(1) \eqsp.
   \end{array}
\right.
\]
The sequence $\{\A_m, m\geq 1 \}$ is defined in
C\ref{hyp:C3}\ref{hyp:C3:b}.\end{hypC} The last assumption is on the step-size
sequence.
\begin{hypC}\label{hyp:C2}  
 One of the following conditions is satisfied:
   \begin{enumerate}[(a)]
   \item \label{hyp:C2:a:1} $\sum_{k}\gamma_{k}= + \infty,
     \sum_{k}\gamma_{k}^{2}<\infty$ and
     $\log(\gamma_{k-1}/\gamma_{k})=o(\gamma_{k})$.
  \item \label{hyp:C2:a:2} $\sum_{k}\gamma_{k}= + \infty,
    \sum_{k}\gamma_{k}^{2}<\infty$ and there exists $\gamma_{\star}>1/(2L)$
    such that $\log(\gamma_{k-1}/\gamma_{k})\sim\gamma_{k}/\gamma_{\star}$.
  \end{enumerate}
\end{hypC}

\subsection{Comments on the assumptions}
\label{sec:SAtheory:comment}
The framework described by (\ref{eq:definition:theta}) and the conditions
C\ref{hyp:C1} to C\ref{hyp:C2} is general enough to cover many scenarios studied
in the literature and to address new ones. 

For SA algorithms (\ref{eq:SAgeneral}) with $\Xi_{n+1} = H(\theta_n, X_{n+1})$,
$\{X_n, n \geq 1 \}$ i.i.d. r.v. (and independent of $\theta_0$) such that
$h(\theta)= \PE\left[H(\theta, X) \right]$, Eq.~(\ref{eq:definition:theta}) is satisfied with
\[
e_{n+1}= H(\theta_n, X_{n+1}) -h(\theta_n) \eqsp, \qquad \qquad r_{n+1} = 0
\eqsp;
\]
and $\PE\left[e_{n+1} \vert \F_n \right] = 0$. Our framework also addresses the
case when $\{X_n, n\geq 1 \}$ is a $\F_n$-adapted controlled Markov chain \ie
when there exists a family of transition kernels $\{Q_\theta, \theta \in \Theta
\}$ such that
\[
\PP(X_{n+1} \in \cdot \vert \F_n) = Q_{\theta_n}(X_n, \cdot) \eqsp,
\]
each kernel possessing an invariant probability distribution $\pi_\theta$ and
$h(\theta) = \int H(\theta,x) \, \pi_\theta(dx)$ - hereafter, these algorithms
will be called ``SA with controlled Markov chain dynamics''. Introduce the
solution $\widehat{H}_\theta$ of the Poisson equation $H(\theta, \cdot) - h(\theta)
= \widehat{H}_\theta - Q_\theta \widehat{H}_\theta$ (see e.g. \cite[Chapter
8]{hernandez:lasserre:2003} or \cite[Chapter 17]{meyn:tweedie:2009}), and set
\[
e_{n+1} = \widehat{H}_{\theta_n}(X_{n+1}) - Q_{\theta_n} \widehat{H}_{\theta_n}(X_n)
\eqsp, \qquad r_{n+1} = Q_{\theta_n} \widehat{H}_{\theta_n}(X_n) -
Q_{\theta_n} \widehat{H}_{\theta_n}(X_{n+1}) \eqsp;
\]
then $\PE\left[e_{n+1} \vert \F_n \right] =0$ $\PP$-almost surely. We will
provide in Section~\ref{sec:appli} sufficient conditions on the transition
kernels $Q_\theta$ so that these sequences $\{e_n,r_n, n \geq 1 \}$ exist and
satisfy the conditions C\ref{hyp:C3} and C\ref{hyp:C4}. Note that the i.i.d.
case is a special case of the controlled Markov chain framework (set $Q_\theta
= \pi_\theta = \pi$ for any $\theta$); and the so-called Robbins-Monro case
corresponds to $Q_\theta = \pi_\theta$ for any $\theta$.

Truncated SA algorithms~(\ref{eq:truncatedSA:general}) can be written as
\[
\theta_{n+1} = \theta_n + \gamma_{n+1} \Xi_{n+1} + \left( \theta_0 - \theta_n -
  \gamma_{n+1} \Xi_{n+1} \right) \un_{ \theta_n + \gamma_{n+1} \Xi_{n+1} \notin
  \K_{n+1}} \eqsp;
\]
in most (if not any) proof of convergence of this sequence to limiting points
in the interior of $\Theta$, the first step consists in proving that
$\PP$-almost-surely, the number of truncations is finite (see e.g. Andrieu \etal
\cite[Theorem 1]{andrieu:moulines:priouret:2005}). Therefore, the term $\left(
  \theta_0 - \theta_n - \gamma_{n+1} \Xi_{n+1} \right) \un_{ \theta_n +
  \gamma_{n+1} \Xi_{n+1} \notin \K_{n+1}}$ is null for any large $n$ on the set
$\{\lim_q \theta_q = \theta_\star \}$ thus showing that it is part of
$\gamma_{n+1} r_{n+1}^{(1)}$ in the expansion (\ref{eq:definition:theta}).

The condition C\ref{hyp:C1} considers a limiting target $\theta_\star$ which is
assumed to be stable and such that the linear term in the Taylor's expansion of
$h$ at $\theta_\star$ does not vanish (see condition
C\ref{hyp:C1}\ref{hyp:C1:c}). Results for the case of vanishing linear term can
be found in Chen \cite[Section 3.2]{chen:2002}. When $h$ is a gradient function so
that the SA algorithm is a stochastic gradient procedure, the condition
C\ref{hyp:C1}\ref{hyp:C1:a} assumes that $\theta_\star$ is a root of the
gradient. Therefore, our assumptions do not cover the case of constrained
optimization problem with solutions on the boundaries of the constraint set
$\Theta$.  For rates of convergence for these constrained SA algorithms, see
e.g.  Buche and Kushner~\cite{buche:kushner:2001}.

The conditions C\ref{hyp:C3} and C\ref{hyp:C4} are designed to address the case
of multiple targets, a framework which improves on many published results. It
is usually assumed in the literature that there is an unique limiting target
(see e.g. Fabian~\cite{fabian:1978}, Buche and
Kushner~\cite{buche:kushner:2001}, Chen~\cite[Chapter 3]{chen:2002} and Lelong~\cite{lelong:2013}).  While we are interested in proving a Central Limit
Theorem given the tail event $\{\lim_q \theta_q = \theta_\star \}$, it is
assumed in C\ref{hyp:C3}\ref{hyp:C3:a} that the r.v. $e_{n+1}$ in the expansion
(\ref{eq:definition:theta}) is a martingale increment with respect to (w.r.t.)
the probability $\PP$. As discussed above, such an expansion is easily
verified. Note that since the event $\{\lim_q \theta_q = \theta_\star \}$ is in
the tail $\sigma$-field $\sigma(\bigvee_n \F_n)$, it is not true that $\{e_{n},
n \geq 1 \}$ are martingale-increments w.r.t. the probability $\PP(\cdot \vert
\lim_q \theta_q = \theta_\star)$. Therefore, our framework is not a special
case of the single target framework.

The main use of C\ref{hyp:C3} is to prove that the $\{e_n, n\geq 1 \}$
satisfies a CLT under the conditional distribution $\PP(\cdot \vert \lim_q
\theta_q = \theta_\star)$.  We could weaken some of the assumptions, for
example by relaxing the $2+\tau$-moment condition C\ref{hyp:C3}\ref{hyp:C3:b}
which is a way to easily check the Lindeberg condition for martingale
difference array.  Nevertheless, our goal is not only to state a theorem with
weaker assumptions but also to present easy-to-check conditions.

When there exists $\tau>0$ such that $\sup_{k \geq 1} \PE \left[|e_{k}
  |^{2+\tau} \right]<\infty$, C\ref{hyp:C3}\ref{hyp:C3:b} is satisfied with
$\A_m = \A_{m,k} = \Omega$.  When there exist $\tau,\delta >0$ such that
\begin{equation}
  \label{eq:C2b:restrictif}
  \sup_{k \geq 0} \PE \left[|e_{k+1} |^{2+\tau} \un_{|\theta_k
    -\theta_\star| \leq \delta } \right]<\infty \eqsp,
\end{equation}
then C\ref{hyp:C3}\ref{hyp:C3:b} is satisfied with $\A_{m,k} = \bigcap_{m \leq
  j \leq k} \{ |\theta_j -\theta_\star| \leq \delta\}$ and $\A_m = \bigcap_{j
  \geq m} \{ |\theta_j -\theta_\star| \leq \delta\}$.  In most contributions,
rates of convergence are derived under the condition (\ref{eq:C2b:restrictif})
(see e.g. the recent works by Pelletier~\cite{pelletier:1998} and Lelong~\cite{lelong:2013}).
This framework is too restrictive to address the case of SA with controlled
Markov chain dynamics when the ergodic properties of the transition kernels
$\{Q_\theta, \theta \in \Theta \}$ are not uniform in $\theta$. Our assumption
C\ref{hyp:C3}\ref{hyp:C3:b} is designed to address this framework as it will be
shown in Section~\ref{sec:appli}.

C\ref{hyp:C3}\ref{hyp:C3:c} is an assumption on the conditional variance of the
martingale-increment term $e_{n}$, which is more general than what is usually
assumed. In Zhu~\cite{zhu:1996}, Pelletier~\cite{pelletier:1998}, Chen~\cite{chen:2002} and
Leling~\cite{lelong:2013} (resp. in Delyon~\cite{delyon:2000}), a CLT is proved
under the assumption that $\PE \left[e_{k+1}e_{k+1}^{T}|\F_{k} \right] =
U_\star + D_{k}^{(1)} $ (resp. $\PE \left[e_{k+1}e_{k+1}^{T}|\F_{k} \right] =
U_\star + D_{k}^{(2)} $) where $D_k^{(1)}, D_k^{(2)}$ satisfy (\ref{eq:var3})
and $U_\star$ is a deterministic symmetric positive definite matrix. A first
improvement is to remove the assumption that $U_\star$ is deterministic.  A
second improvement is in the combination $D_k^{(1)} + D_k^{(2)}$. The
introduction of the term $D_k^{(2)}$ is a strong improvement since it covers
the case of SA with controlled Markov chain dynamic: observe indeed that in
this case $\PE \left[e_{k+1}e_{k+1}^{T}|\F_{k} \right]$ is a function of $(X_k,
\theta_k)$ and it is really unlikely that this term converges almost-surely to
a (random) variable along the set $\{\lim_q \theta_q = \theta_\star \}$.
Allowing an additional term $D_k^{(2)}$ such that the sum $\sum_{k=1}^n
D_k^{(2)}$ converges in some sense to zero introduces more flexibility (see
Section~\ref{sec:appli} for more details). We will also show in
Section~\ref{sec:appli} how our framework improves on
Delyon~\cite{delyon:2000}.  Examples of SA algorithm where
C\ref{hyp:C3}\ref{hyp:C3:c} holds with resp.  Robbins-Monro and controlled
Markov chain dynamics can be found resp. in Bianchi
\etal~\cite{bianchi:fort:hachem:2013} and Fort
\etal~\cite{fort:jourdain:kuhn:lelievre:stoltz:2013}.

Examples of sequences satisfying the condition C\ref{hyp:C2} are the polynomial
ones. The step size $\gamma_{n}\sim \gamma_\star n^{-a}$ for $a \in (1/2, 1)$
satisfies C\ref{hyp:C2}\ref{hyp:C2:a:1}.  The step size
$\gamma_{n}\sim\gamma_{\star}/n$ satisfies C\ref{hyp:C2}\ref{hyp:C2:a:2}; note
that the condition on ($\gamma_\star, L)$ is well known in the literature (see
e.g. Chen~\cite[Assumption A3.1.4]{chen:2002}).

\subsection{Main result}
\begin{theo}
\label{theo:clt} 
Choose $\theta_0 \in \Theta$ and consider the sequence $\{\theta_n, n \geq 0
\}$ given by (\ref{eq:definition:theta}). Assume C\ref{hyp:C1}, C\ref{hyp:C3},
C\ref{hyp:C4} and C\ref{hyp:C2}. Let $V$ be the positive definite matrix
satisfying w.p.1 on the set $\{\lim_q \theta_q = \theta_\star \}$,
\[
\left\{\begin{array}{ll} V\nabla h(\theta_{\star})^T+\nabla
    h(\theta_{\star})V=-U_{\star}\eqsp, & \text{in case C\ref{hyp:C2}\ref{hyp:C2:a:1}\eqsp,} \\
    V(\Id+2\gamma_{\star}\nabla h(\theta_{\star})^T)+(\Id +2\gamma_{\star}\nabla
    h(\theta_{\star}))V=-2\gamma_{\star}U_{\star}\eqsp, & \text{in case
      C\ref{hyp:C2}\ref{hyp:C2:a:2} \eqsp.}
\end{array}\right.
\]
Under the conditional probability $\PP\left( \cdot \vert \lim_q \theta_q =
  \theta_\star \right)$, $\{\gamma_n^{-1/2} \ \left( \theta_n -
  \theta_\star \right), n\geq 1 \}$ converges in distribution to a r.v. with
characteristic function given for any $t \in \Rset^d$ by
\[
\frac{1}{\PP(\lim_q \theta_q = \theta_\star)} \PE\left[ \un_{\lim_q \theta_q =
    \theta_\star} \ \exp(-\frac{1}{2}t^T V t)\right] \eqsp.
\]
\end{theo}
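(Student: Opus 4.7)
The plan is to follow the classical route for martingale-type CLTs for stochastic approximation: linearize the recursion around $\theta_\star$, iterate the linearized recursion to represent $\theta_n-\theta_\star$ as a weighted sum of martingale increments plus a negligible remainder, and then apply a martingale CLT for triangular arrays. The multiple-target and weakened-noise features will be handled throughout by working on the truncated event $\{\lim_q \theta_q = \theta_\star\} \cap \A_m$ and letting $m\to\infty$ at the end.

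On $\{\lim_q \theta_q = \theta_\star\}$, for $n$ large enough $\theta_n$ lies in the neighborhood given by C\ref{hyp:C1}\ref{hyp:C1:b}, so a second-order Taylor expansion yields $h(\theta_n)=\nabla h(\theta_\star)(\theta_n-\theta_\star)+\rho_n$ with $|\rho_n|\leq C|\theta_n-\theta_\star|^2$. Injecting this into (\ref{eq:definition:theta}) and setting
\[
\Phi_{n,k} \eqdef \prod_{j=k+1}^{n}\bigl(\Id+\gamma_{j}\nabla h(\theta_\star)\bigr), \qquad \Phi_{n,n}=\Id,
\]
I obtain the representation
\[
\theta_n-\theta_\star = \Phi_{n,0}(\theta_0-\theta_\star) + \sum_{k=1}^{n}\gamma_k \Phi_{n,k}\,e_k + \sum_{k=1}^{n}\gamma_k \Phi_{n,k}(r_k+\rho_{k-1}).
\]
By C\ref{hyp:C1}\ref{hyp:C1:c} and C\ref{hyp:C2}, a standard estimate gives $\|\Phi_{n,k}\|\leq C\exp\bigl(-L\sum_{j=k+1}^{n}\gamma_j\bigr)$, so the initial-condition term is exponentially smaller than $\sqrt{\gamma_n}$. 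The quadratic remainder $\rho_{k-1}$ and the $r_k^{(1)}$ piece are absorbed using the first line of C\ref{hyp:C4}: writing $r_k^{(1)}=\sqrt{\gamma_k}\,\varepsilon_k$ with $\varepsilon_k\,\un_{\A_m}\un_{\lim}=O_{w.p.1}(1)o_{L^1}(1)$ and using $\gamma_n^{-1/2}\sum_k \gamma_k^{3/2}\|\Phi_{n,k}\|=O(\sqrt{\gamma_n})$, this contribution is $o_{L^1}(1)$. The $r_k^{(2)}$ piece is handled by a summation by parts against its partial sums, combined with the second line of C\ref{hyp:C4}.

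The main object is therefore the martingale term $M_n\eqdef \gamma_n^{-1/2}\sum_{k=1}^{n}\gamma_k \Phi_{n,k}e_k\un_{\A_{m,k}}$, for which $\{\gamma_k\Phi_{n,k}e_k\un_{\A_{m,k}}\}_{k\leq n}$ is a triangular array of $\F_k$-martingale differences under $\PP$. The Lindeberg condition is verified via the Lyapunov criterion using C\ref{hyp:C3}\ref{hyp:C3:b}: the $(2+\tau)$-moment bound on the truncated increments, combined with $\gamma_n^{-1}\sum_k\gamma_k^{2+\tau}\|\Phi_{n,k}\|^{2+\tau}\to 0$, gives the required control. The conditional quadratic variation is
\[
\gamma_n^{-1}\sum_{k=1}^{n}\gamma_k^{2}\,\Phi_{n,k}\bigl(U_\star+D_k^{(1)}+D_k^{(2)}\bigr)\Phi_{n,k}^{T}.
\]
The $D_k^{(1)}$ contribution vanishes by dominated convergence on the target set; the $D_k^{(2)}$ contribution vanishes after Abel summation against (\ref{eq:var3}). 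A standard discrete-time Lyapunov computation, separated into the two regimes of C\ref{hyp:C2}, shows that $\gamma_n^{-1}\sum_k\gamma_k^{2}\Phi_{n,k}U_\star\Phi_{n,k}^{T}$ converges pathwise to the (random) matrix $V$ solving the Lyapunov equation in the statement.

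The most delicate step is passing from the unconditional CLT to the conditional statement under $\PP(\,\cdot\,|\lim_q\theta_q=\theta_\star)$, since this tail event destroys the martingale property of $\{e_k\}$. I would work throughout with the characteristic function
\[
\varphi_n^{(m)}(t) \eqdef \PE\!\left[\un_{\A_m}\un_{\lim_q\theta_q=\theta_\star}\exp\!\bigl(it^{T}\gamma_n^{-1/2}(\theta_n-\theta_\star)\bigr)\right],
\]
conditioning on $\F_k$ step by step to exploit the martingale property under $\PP$, and combine the CLT for the truncated martingale array with the remainder estimates above to show $\varphi_n^{(m)}(t)\to \PE[\un_{\A_m}\un_{\lim_q\theta_q=\theta_\star}\exp(-\tfrac12 t^{T}Vt)]$. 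Letting $m\to\infty$ using $\PP(\A_m\,|\,\lim_q\theta_q=\theta_\star)\to 1$ from C\ref{hyp:C3}\ref{hyp:C3:b} and dividing by $\PP(\lim_q\theta_q=\theta_\star)$ gives the stated conditional limit. The main obstacle I foresee is the joint handling of the random limiting variance $U_\star$ with the multiple-target structure: one must argue pathwise that $V$ is a well-defined $\sigma(\bigvee_n\F_n)$-measurable positive-definite matrix on the target event, and ensure that all $o$-type estimates are compatible with the truncation sets $\A_{m,k}$ that approximate $\A_m$ only in the limit $k\to\infty$.
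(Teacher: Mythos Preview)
Your overall route matches the paper's, but your treatment of the quadratic Taylor remainder $\rho_{k-1}$ has a circularity that the paper's argument is specifically designed to avoid. You propose to absorb $\rho_{k-1}$ alongside $r_k^{(1)}$ via $\gamma_n^{-1/2}\sum_k\gamma_k^{3/2}\|\Phi_{n,k}\|=O(1)$; this implicitly assumes $\gamma_k^{-1/2}|\rho_{k-1}|$ is controlled in an $O_{w.p.1}(1)o_{L^1}(1)$ sense, i.e.\ $|\theta_{k-1}-\theta_\star|^2=O(\sqrt{\gamma_k})$. No such a priori rate is available---only $\theta_k\to\theta_\star$ a.s.\ on the target event---and any use of the weighted-sum estimate (Lemma~\ref{lem:Hurwitz:aver}) to bound $\gamma_n^{-1/2}\sum_k\gamma_k\|\Phi_{n,k}\||\theta_{k-1}-\theta_\star|^2$ requires $\limsup_k\gamma_k^{-1}|\theta_{k-1}-\theta_\star|^2<\infty$, which is essentially the tightness you are trying to prove. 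The paper breaks this loop by writing $\theta_n-\theta_\star=\mu_n+\rho_n$, where $\mu_n$ solves the \emph{pure} linear recursion $\mu_{n+1}=(\Id+\gamma_{n+1}\nabla h(\theta_\star))\mu_n+\gamma_{n+1}e_{n+1}$ with $\mu_0=0$. Proposition~\ref{prop:mu:ps-L2} gives directly $\gamma_n^{-1}|\mu_n|^2=O_{L^1}(1)+o_{w.p.1}(1)$ from the martingale structure alone. In the recursion for $\rho_n$, the quadratic nonlinearity $(\mu_n+\rho_n)^TR_\bullet^{(n)}(\mu_n+\rho_n)$ is then expanded: the pieces linear in $\rho_n$ are absorbed into a perturbed product $\psi(n,k)$ (still exponentially contracting because the perturbation is $o(1)$ on the target event), leaving only $\mu_n^TR_\bullet^{(n)}\mu_n$ as a source term---now $O(\gamma_n)$ by the first step. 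This bootstrap via $\mu_n$ (Proposition~\ref{prop:rho:rate}) is the missing idea.

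A smaller remark: the passage from the unconditional martingale CLT to the statement under $\PP(\cdot\mid\lim_q\theta_q=\theta_\star)$ is left vague (``conditioning on $\F_k$ step by step''). The paper's device is to introduce, in addition to the moment-control sets $\A_{m,k}$, an adapted sequence $A_k\in\F_k$ with $\un_{A_k}\to\un_{\lim_q\theta_q=\theta_\star}$ (Lemma~\ref{lem:PierreP}), and to replace $\gamma_n^{-1/2}\mu_n\un_{\lim_q\theta_q=\theta_\star}$ by the genuine $\PP$-martingale array $\gamma_n^{-1/2}\sum_k\gamma_k\psi_\star(n,k+1)e_k\un_{A_{k-1}}$; the two differ by a random finite number of terms, each of size $\gamma_n^{-1/2}|\psi_\star(n,\ell)|=o(1)$. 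This is what makes the martingale-array CLT directly applicable to the restricted characteristic function, without invoking stable convergence.
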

When the matrix $U_\star$ in Assumption C\ref{hyp:C3}\ref{hyp:C3:c} is
deterministic, the limiting distribution is a centered multidimensional
Gaussian distribution with (deterministic) covariance matrix $V$.

Given matrices $A,E$, existence of a solution to the equation $\quad VA+
A^TV=-E \quad$ is solved by the Lyapunov theorem (see e.g. Horn and
Johnson~\cite[Theorem 2.2.1.]{horn:johnson:1994}). When $A$ is a (negative)
stable real matrix and $E$ is positive definite, then there exists an unique
positive definite matrix $V$ satisfying the Lyapunov equation $\quad VA+
A^TV=-E \quad$ (see e.g.  Horn and Johnson~\cite[Theorem
2.2.3.]{horn:johnson:1994}).

\paragraph{Sketch of the proof of Theorem~\ref{theo:clt}}
The proof of Theorem~\ref{theo:clt} is detailed in Section~\ref{sec:proofs}.
The key ingredient is the Central Limit Theorem for martingale arrays.

As commented in Section~\ref{sec:SAtheory:comment}, $e_{n}$ is not a
martingale-increment w.r.t. the conditional probability $\PP(\cdot \vert \lim_q
\theta_q = \theta_\star)$. To overcome this technical difficulty, we use that
\begin{equation}
  \label{eq:proof:steps}
 e_{n+1} = e_{n+1} \un_{A_n}  + e_{n+1} \left(1 - \un_{A_n} \right) 
\end{equation}
where $\{A_n, n\geq 1\}$ is a $\F_n$-adapted sequence of sets converging to $\{\lim_q
\theta_q = \theta_\star \}$ (such a sequence always exists, see
Lemma~\ref{lem:PierreP}).  Along the event $\{\lim_q \theta_q = \theta_\star
\}$, the second term in the right hand side (rhs) of (\ref{eq:proof:steps}) is
null for any $n$ larger than some almost-surely finite random time.

We  write $ \theta_n - \theta_\star = \mu_n + \rho_n$, where $\mu_n$
satisfies the equation
\[
\mu_{n+1} = \left( \Id + \gamma_{n+1} \nabla h(\theta_\star) \right) \mu_n +
\gamma_{n+1} e_{n+1} \eqsp; \qquad \mu_0 = 0 \eqsp.
\]
$\Id$ denotes the $d \times d$ identity matrix.  Roughly speaking, the sequence
$\{\mu_n, n\geq 0 \}$ captures the linear approximation of $h(\theta_n)$ and
the martingale-increment noise sequence $\{e_n, n\geq 1 \}$.

We prove that $\gamma_n^{-1/2} \ \rho_n \un_{\lim_q \theta_q =
  \theta_\star}$ converges to zero in probability so that $\{\mu_n,n \geq 0 \}$
is the leading term. We then establish that for any $t\in
\Rset^d$,
\[
\lim_n \PE\left[\un_{\lim_q \theta_q = \theta_\star} \ \exp\left( i
    \gamma_n^{-1/2} \ t^T \mu_n\right) \right] = \PE\left[ \un_{\lim_q
    \theta_q = \theta_\star} \ \exp\left( -\frac{1}{2}t^T V t \right) \right]
\eqsp.
\]

\section{A Central Limit Theorem for Iterate Averaging}
\label{sec:averSAtheory}
Theorem~\ref{theo:clt} shows that the rate of convergence of the sequence
$\{\theta_n, n\geq 0 \}$ to $\theta_\star$ is $O(n^{a/2})$ when $\gamma_n \sim
\gamma_\star /n^a$ for some $a \in (1/2, 1]$.  The maximal rate is reached by
choosing $\gamma_n \sim \gamma_\star /n$, for some $\gamma_\star$ satisfying
the conditions C\ref{hyp:C2}\ref{hyp:C2:a:2}.  The main drawback with such a
choice of the step-size sequence $\{\gamma_n, n\geq 1 \}$ is that in practice,
$-L$ \ie the largest real part of the eigenvalues of $\nabla h(\theta_\star)$
is unknown so that the condition C\ref{hyp:C2}\ref{hyp:C2:a:2} is difficult to
check.

The second comment is on the limiting covariance matrix when the rate is
maximal (\ie in the case $\gamma_n \sim \gamma_{\star}/n$). For any
non-singular matrix $\Gamma$, we could define the algorithm
\[
\tilde \theta_{n+1}= \tilde\theta_{n}+\gamma_{n+1} \Gamma
h(\tilde\theta_{n})+\gamma_{n+1} \Gamma e_{n+1}+\gamma_{n+1} \Gamma r_{n+1}
\eqsp, \qquad \tilde{\theta}_0 \in \Theta \eqsp.
\]
This equation is of the form (\ref{eq:definition:theta}) with a mean field
$\tilde h = \Gamma h$ and noises $\{e_n, r_n, n\geq 1 \}$ replaced with $\{
\Gamma e_n, \Gamma r_n, n\geq 1 \}$.  Then, Theorem~\ref{theo:clt} gives
sufficient conditions so that a CLT for the sequence $\{\tilde \theta_{n},
n\geq 0 \}$ holds: the matrix $V$ is replaced with $\tilde V =\tilde V(\Gamma)
$ satisfying
\[
\tilde V(\Id+2\gamma_{\star}\nabla h(\theta_{\star})^T \Gamma^T)+(\Id
+2\gamma_{\star}\nabla h(\theta_{\star})\Gamma)\tilde V=-2\gamma_{\star}\Gamma
U_{\star}\Gamma^T \eqsp.
\]
A natural question is the ``optimal'' choice of the gain matrix $\Gamma$,
defined as the matrix $\Gamma_\star$ such that for any $\lambda \in \Rset^d$,
$\lambda^T \tilde V(\Gamma) \lambda \geq \lambda^T \tilde V(\Gamma_\star)
\lambda$.  Following the same lines as in Benveniste \etal~\cite[Proposition 4,
Chapter 3, Part I]{benveniste:metivier:priouret:1987}, it can be proved that
$\Gamma_\star = - \gamma_\star^{-1} \nabla h(\theta_\star)^{-1}$ and in this
case,
\[
\tilde V(\Gamma_\star) = \gamma_\star^{-1} \nabla h(\theta_\star)^{-1}
U_\star \nabla h(\theta_\star)^{-T} \eqsp.
\]
Theorem~\ref{theo:clt:aver} below shows that by considering the averaged
sequence $\{\btheta_n, n\geq 0\}$, the optimal rate of convergence (\ie the
rate $\sqrt{n}$) and the optimal asymptotic covariance matrix (optimal in the
sense discussed above) can be reached whatever the sequence $\{\gamma_n, n\geq
1\}$ satisfying C\ref{hyp:C2}\ref{hyp:C2:a:1} used in the basic SA sequence
(\ref{eq:definition:theta}).  Therefore, such an optimality can be obtained
even when $\nabla h(\theta_\star)$ is unknown. Note also that on a practical
point of view, slow decreasing step-size $\gamma_n$ are better (see e.g.
Spall~\cite[Section 4.4.]{spall:2003}) and this simple averaging procedure
improves the rate of convergence of the estimate of $\theta_\star$.

These properties of the averaged sequence were simultaneously established by
Ruppert~\cite{ruppert:1991} and Polyak and Juditsky~\cite{polyak:juditsky:1992}
under more restrictive conditions than those stated below.

\subsection{Assumptions}
\begin{hypAver}\label{hyp:Aver3}
 \begin{enumerate}[(a)]
 \item \label{hyp:Aver3:a}  $\{e_{n},\ n\geq 1\}$ is a $\F_n$-adapted
   $\PP$-martingale-increment sequence.
 \item \label{hyp:Aver3:b} There exists a sequence $\{\A_m, m\geq 1 \}$ such
   that $\lim_m \PP(\A_m \vert \lim_q \theta_q = \theta_\star) =1$, and for any
   $m \geq 1$,
    \[\sup_{k} \PE \left[|e_{k}
      |^{2}\un_{\A_{m,k-1}} \right]<\infty \eqsp,
\]
where $\A_{m,k-1} \in \F_{k-1}$ and $\lim_k \un_{\A_{m,k}} = \un_{\A_m}$ almost-surely on the set $\{\lim_q \theta_q = \theta_\star\}$.
  \item \label{hyp:Aver3:c}  Let
\[
\mathcal{E}_{n+1} = \frac{1}{\sqrt{n+1}}\sum_{k=0}^{n}e_{k+1} \eqsp.
\]
There exists a random matrix $U_\star$, positive definite w.p.1. on the set
$\{\lim_q \theta_q = \theta_\star \}$, such that for any $t \in \Rset^d$,
\[
\lim_n \PE\left[\un_{\lim_q \theta_q = \theta_\star} \ \exp(it^T
  \mathcal{E}_{n+1}) \right] = \PE\left[\un_{\lim_q \theta_q = \theta_\star} \ 
  \exp(- \frac{1}{2} \ t^T U_\star t) \right] \eqsp.
\]
\end{enumerate}
\end{hypAver}
We prove in Lemma~\ref{lem:AVER1} that when $\lim_n n \gamma_n >0$, assumption
C\ref{hyp:C3} implies AVER\ref{hyp:Aver3}.  Note also that since $\lim_m
\PP(\A_m \vert \lim_q \theta_q = \theta_\star) =1$,
AVER\ref{hyp:Aver3}\ref{hyp:Aver3:c} is equivalent to the condition: for any
$m \geq 1$,
\[
\lim_n \PE\left[\un_{\lim_q \theta_q = \theta_\star} \ \exp(it^T
  \mathcal{E}_{n+1}) \un_{\A_m} \right] = \PE\left[\un_{\lim_q \theta_q = \theta_\star} \ 
  \exp(- \frac{1}{2} \ t^T U_\star t)  \un_{\A_m} \right] \eqsp.
\]
For a sequence of $\Rset^d$-valued r.v. $\{Z_n, n \geq 0\}$, we write $Z_n =
O_{L^p}(1)$ if $\sup_n \PE[|Z_n|^p] < \infty$.

\begin{hypAver}\label{hyp:Aver4} $r_n$ is $\F_n$-adapted. $r_n = r_n^{(1)} + r_n^{(2)}$ with  for any $m \geq 1$,
 \begin{enumerate}[(a)] 
 \item \label{hyp:Aver4:a} $ \gamma_n^{-1/2} \ r_{n}^{(1)}
   \un_{\lim_{q}\theta_{q}=\theta_{\star}} \un_{\A_m}= \ O_{w.p.1}(1)
   O_{L^2}(1)$.
 \item \label{hyp:Aver4:a'} $\sqrt{\gamma_n} \sum_{k=1}^n r_k^{(2)} \un_{\lim_q
     \theta_q = \theta_\star} \un_{\A_m} = O_{w.p.1}(1) O_{L^2}(1)$ .
 \item
   \label{hyp:Aver4:b} $n^{-1/2} \sum_{k=0}^n r_{k+1}   \un_{\lim_{q}\theta_{q}=\theta_{\star}} \plim 0$.
\end{enumerate}
The sequence $\{\A_m, m \geq 1 \}$ is defined in
AVER\ref{hyp:Aver3}\ref{hyp:Aver3:b}.
\end{hypAver}
Note that AVER\ref{hyp:Aver4}\ref{hyp:Aver4:b} is equivalent to $n^{-1/2}
\sum_{k=0}^n r_{k+1} \un_{\lim_{q}\theta_{q}=\theta_{\star}} \un_{\A_m} \plim
0$ for any $m\geq 1$.
 
\begin{hypAver}\label{hyp:Aver2}
  $\lim_{n}n\gamma_{n}=+\infty$ and
\[
\lim_{n}\frac{1}{\sqrt{n}}\sum_{k=1}^{n}\gamma_{k}^{-1/2}
\left|1-\frac{\gamma_{k}}{\gamma_{k+1}} \right|=0 \eqsp, \qquad \qquad
\lim_{n}\frac{1}{\sqrt{n}}\sum_{k=1}^{n} \gamma_k =0.
\]
\end{hypAver}
The step size $\gamma_{n}\sim \gamma_\star n^{-a}$ for $a \in (1/2, 1)$
satisfies AVER\ref{hyp:Aver2} but the step size
$\gamma_{n}\sim\gamma_{\star}/n$ does not.  Observe that if the sequence
$\{\gamma_n, n\geq 0 \}$ is non-increasing (or ultimately non-increasing) then
(see Lemma~\ref{lem:AVER3})
\[
\lim_n n \gamma_n =+\infty \Longrightarrow \lim_n
\frac{1}{\sqrt{n}}\sum_{k=1}^{n}\gamma_{k}^{-1/2}
\left|1-\frac{\gamma_{k}}{\gamma_{k+1}} \right|=0 \eqsp.
\]

\subsection{Main results}
We show that the above conditions allow a control of the $L^2$-moment of the
errors $\{\theta_n - \theta_\star, n\geq 0 \}$. This result is a cornerstone
for the proof of Theorem~\ref{theo:clt:aver}. The proof is given in
Section~\ref{sec:proofs}.
\begin{prop}
\label{prop:L2norm:theta}
Assume C\ref{hyp:C1}, C\ref{hyp:C2},
AVER\ref{hyp:Aver3}\ref{hyp:Aver3:a}-\ref{hyp:Aver3:b} and
AVER\ref{hyp:Aver4}\ref{hyp:Aver4:a}-\ref{hyp:Aver4:a'}. Then, for any $m \geq 1$
\[
\gamma_n^{-1} \left\| \theta_n - \theta_\star \right\|^2 \un_{\lim_q
  \theta_q = \theta_\star}  \un_{\A_m} = O_{w.p.1}(1) \ O_{L^1}(1) \eqsp.
\]
\end{prop}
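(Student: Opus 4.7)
My plan is to run a Lyapunov-style recursion on $W_n := (\theta_n-\theta_\star)^T P (\theta_n-\theta_\star)$, where $P$ is the symmetric positive definite solution of $PA + A^T P = -Q$ with $A := \nabla h(\theta_\star)$ and $Q$ some symmetric positive definite matrix; in case C\ref{hyp:C2}\ref{hyp:C2:a:2} I would rather use $P$ solving the Lyapunov equation with $\Id + 2\gamma_\star A$ in place of $A$, which is still Hurwitz thanks to $\gamma_\star > 1/(2L)$. Existence of $P$ comes from the Lyapunov theorem already invoked after Theorem~\ref{theo:clt}. The target is to prove $\PE[W_n \un_{\mathcal{B}_n}] = O(\gamma_n)$ for an $\F_n$-adapted indicator $\un_{\mathcal{B}_n}$ that equals $1$ from some a.s.\ finite index onward on $\A_m \cap \{\lim_q \theta_q = \theta_\star\}$; the desired $O_{w.p.1}(1)O_{L^1}(1)$ bound then follows, with the a.s.-bounded factor capturing the (finite) discrepancy between $\un_{\mathcal{B}_n}$ and $\un_{\A_m}\un_{\{\lim_q \theta_q = \theta_\star\}}$.

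To build $\mathcal{B}_n$, fix $\delta>0$ small enough that $B(\theta_\star,\delta) \subset \Theta$ and $h$ is $C^2$ on this ball with $h(\theta) = A(\theta-\theta_\star) + O(|\theta-\theta_\star|^2)$, and set $\mathcal{B}_n := \A_{m,n} \cap \bigcap_{0\le k \le n}\{|\theta_k-\theta_\star|\le \delta\}$. The adaptedness of $\A_{m,n}$ in AVER\ref{hyp:Aver3}\ref{hyp:Aver3:b}, its convergence to $\un_{\A_m}$ on the limit set, and $\theta_n \to \theta_\star$ together ensure $\un_{\mathcal{B}_n} \to \un_{\A_m}\un_{\{\lim_q \theta_q = \theta_\star\}}$ a.s. Expanding $W_{n+1}$ from (\ref{eq:definition:theta}) with the Taylor expansion of $h$ yields, for $\delta$ small enough to absorb the $O(\gamma_{n+1}|\theta_n-\theta_\star|^3)$ remainder into a fraction of the quadratic contraction,
\[
W_{n+1}\un_{\mathcal{B}_n} \le (1-\lambda\gamma_{n+1})W_n\un_{\mathcal{B}_n} + 2\gamma_{n+1}(\theta_n-\theta_\star)^T P (e_{n+1}+r_{n+1})\un_{\mathcal{B}_n} + C\gamma_{n+1}^2 |e_{n+1}+r_{n+1}|^2\un_{\mathcal{B}_n}\eqsp,
\]
for some $\lambda>0$. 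Since $\mathcal{B}_n \in \F_n$, taking $\PE[\,\cdot \mid \F_n]$ annihilates the $e_{n+1}$ cross term by AVER\ref{hyp:Aver3}\ref{hyp:Aver3:a}; the $|e_{n+1}|^2$ term is uniformly bounded by AVER\ref{hyp:Aver3}\ref{hyp:Aver3:b}; and the $r^{(1)}$ contribution is handled by Young's inequality combined with AVER\ref{hyp:Aver4}\ref{hyp:Aver4:a}.

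The delicate ingredient is $r^{(2)}$, which is only controlled through its partial sums $R_n := \sum_{k=1}^n r_k^{(2)}$ by AVER\ref{hyp:Aver4}\ref{hyp:Aver4:a'}. I would iterate the recursion to obtain
\[
\PE[W_n \un_{\mathcal{B}_{n-1}}] \le \Pi_{n,0}\,\PE[W_0] + 2\sum_{k=0}^{n-1} \gamma_{k+1} \Pi_{n,k}\ \PE\!\left[(\theta_k-\theta_\star)^T P\, r_{k+1}^{(2)}\un_{\mathcal{B}_k}\right] + C\sum_{k=0}^{n-1} \gamma_{k+1}^2 \Pi_{n,k}\eqsp,
\]
with $\Pi_{n,k} := \prod_{j=k+1}^n (1-\lambda\gamma_j)$, and, by C\ref{hyp:C2}, $\Pi_{n,k} \asymp \gamma_n/\gamma_k$ so that the last sum is already $O(\gamma_n)$. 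The middle sum is rearranged by Abel summation in terms of $R_k$: the boundary term is of the form $\gamma_n(\theta_n-\theta_\star)^T P R_n$, handled by Cauchy--Schwarz using AVER\ref{hyp:Aver4}\ref{hyp:Aver4:a'} together with the bound on $W_n$ one is trying to prove; the interior sum gathers coefficient differences, in which the increments $\theta_{k+1}-\theta_k = O(\gamma_{k+1})$ and the step-size ratio $|1-\gamma_k/\gamma_{k+1}|$ from C\ref{hyp:C2} supply extra $\gamma$-factors making the sum $O(\gamma_n)$. A bootstrap Gronwall-style argument then closes the estimate as $\PE[W_n\un_{\mathcal{B}_n}] = O(\gamma_n)$.

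The main obstacle is the clean execution of this Abel rearrangement: the partial sums $R_k$ are only controlled in $L^2$ (and a.s.), not pointwise, so one must perform the rearrangement inside the unrolled recursion while keeping every indicator $\F_k$-adapted so that the martingale-increment cancellations remain valid, and one has to close the estimate in a self-consistent way, since the boundary term of the Abel decomposition already involves the quantity $W_n$ one is trying to bound. A secondary nuisance is the transfer from the adapted indicator $\un_{\mathcal{B}_n}$ to the non-adapted $\un_{\A_m}\un_{\{\lim_q\theta_q=\theta_\star\}}$, which is where the convergence property of $\un_{\A_{m,n}}$ in AVER\ref{hyp:Aver3}\ref{hyp:Aver3:b} plays its role.
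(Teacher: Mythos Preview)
Your Lyapunov-recursion strategy is a natural idea, but it is \emph{not} the route taken in the paper, and as written it has a genuine gap tied to the form of the hypotheses.

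The paper does not run a scalar Lyapunov inequality. It proves a deterministic lemma (Lemma~\ref{lem:averaging:2}) for sequences $x_{n+1}=x_n+\gamma_{n+1}Ax_n+\gamma_{n+1}\zeta^{(1)}_{n+1}+\gamma_{n+1}\zeta^{(2)}_{n+1}$: one splits $x_n\un_{\{\lim x=0\}}=y_n+z_n$, where $y_n$ solves the \emph{linear} recursion driven by $\zeta^{(1)}\un_{\{\lim x=0\}}$ and is controlled directly through the matrix products $\psi_\star(n,k)=\prod_{j=k}^n(\Id+\gamma_jA)$, while the remainder $z_n$ is bounded \emph{pathwise} (no expectation is taken until the very last step). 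Proposition~\ref{prop:L2norm:theta} then follows by checking the hypothesis on $\zeta^{(1)}=(e_{n+1}+r_{n+1})\un_{\A_m}$ in three pieces: $e_{k}\un_{\A_{m,k-1}}$ gives a pure $O_{L^2}$ bound by martingale orthogonality; $e_k(1-\un_{\A_{m,k-1}})\un_{\A_m}$ vanishes for $k$ beyond an a.s.\ finite index and contributes an $O_{w.p.1}(1)$ factor; $r_{k}\un_{\A_m}$ is handled by an Abel transform inside the explicit matrix-product sum.

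The difficulty with your plan is that the assumptions AVER\ref{hyp:Aver4}\ref{hyp:Aver4:a}--\ref{hyp:Aver4:a'} give only $O_{w.p.1}(1)\,O_{L^2}(1)$ control of $r^{(1)}$ and of $\sqrt{\gamma_n}\sum r^{(2)}$ on the \emph{non-adapted} set $\A_m\cap\{\lim_q\theta_q=\theta_\star\}$, not a pure $L^2$ bound. Your target ``$\PE[W_n\un_{\mathcal{B}_n}]=O(\gamma_n)$'' would require, after Young, a bound like $\sup_n\gamma_{n}^{-1}\PE\bigl[|r_n^{(1)}|^2\un_{\mathcal{B}_{n-1}}\bigr]<\infty$; but from $\gamma_n^{-1/2}r_n^{(1)}\un_{\lim_q}\un_{\A_m}=U_nV_n$ with $\sup_n|U_n|<\infty$ a.s.\ and $\sup_n\PE|V_n|^2<\infty$ you cannot conclude $\sup_n\PE[|U_n|^2|V_n|^2]<\infty$, since $U_n$ need not be in $L^\infty$. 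The same obstruction reappears in your Abel boundary term $\gamma_n(\theta_n-\theta_\star)^TPR_n$: after Cauchy--Schwarz you must put the $O_{w.p.1}(1)$ factor from AVER\ref{hyp:Aver4}\ref{hyp:Aver4:a'} under the expectation, and there is no reason it is integrable. Moreover, truncating this $O_{w.p.1}(1)$ factor into your adapted set $\mathcal{B}_n$ is not available, because nothing says the decomposition in AVER\ref{hyp:Aver4} is $\F_n$-adapted. (A minor additional issue: your $\mathcal{B}_n=\A_{m,n}\cap\bigcap_{0\le k\le n}\{|\theta_k-\theta_\star|\le\delta\}$ is decreasing, so if $|\theta_0-\theta_\star|>\delta$ it is empty for all $n$; you would need a free starting index, which again pushes an $O_{w.p.1}(1)$ factor into the argument.)

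The paper's $y_n/z_n$ decomposition is designed precisely so that the $O_{w.p.1}(1)$ factors never have to go \emph{inside} an expectation: the $L^2$ computation is done on the purely adapted martingale piece $\sum_k\gamma_k\psi_\star(n,k+1)e_k\un_{\A_{m,k-1}}$, and every $O_{w.p.1}(1)$ factor is pulled outside the sums pathwise before one takes $\PE$ of the remaining $O_{L^p}$ part. Your Lyapunov scheme would work if AVER\ref{hyp:Aver4}\ref{hyp:Aver4:a}--\ref{hyp:Aver4:a'} were pure $O_{L^2}(1)$ statements, but that stronger hypothesis is exactly what the paper avoids in order to cover controlled Markov chain dynamics.
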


\begin{theo}
\label{theo:clt:aver}
Choose $\theta_0 \in\Theta$ and consider the averaged sequence given
by~(\ref{eq:averaged:sequence:def}).  Assume C\ref{hyp:C1},
C\ref{hyp:C2}\ref{hyp:C2:a:1}, AVER\ref{hyp:Aver3}, AVER\ref{hyp:Aver4} and
AVER\ref{hyp:Aver2}.  Then for any $t \in \Rset^d$,
  \begin{multline*}
    \lim_n \PE\left[\un_{\lim_q \theta_q = \theta_\star} \ \exp\left( i
        \sqrt{n} \ 
        t^T \left( \btheta_n - \theta_\star \right)\right) \right]  \\
    = \PE\left[\un_{\lim_q \theta_q = \theta_\star} \ \exp \left(-
        \frac{1}{2}t^T \nabla h(\theta_\star)^{-1} \ U_\star \ (\nabla
        h(\theta_\star)^{-1})^T t \right) \right] \eqsp.
  \end{multline*}
\end{theo}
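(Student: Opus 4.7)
}
The plan is to follow the classical Polyak--Ruppert route, adapted to the present conditional framework. The starting observation is that the recursion (\ref{eq:definition:theta}) together with a Taylor expansion of $h$ at $\theta_\star$, available by C\ref{hyp:C1}, gives on $\{|\theta_k-\theta_\star|\le\delta\}$
\[
\nabla h(\theta_\star)(\theta_k-\theta_\star)=\gamma_{k+1}^{-1}(\theta_{k+1}-\theta_k)-e_{k+1}-r_{k+1}-R_k,\qquad |R_k|\le C|\theta_k-\theta_\star|^2.
\]
Summing for $k=0,\dots,n$, rescaling by $(n+1)^{-1/2}$, and performing an Abel summation on the telescoping first term produces the key decomposition
\[
\nabla h(\theta_\star)\,\sqrt{n+1}\,(\btheta_n-\theta_\star)=-\mathcal{E}_{n+1}+\sum_{j=1}^{4}T_n^{(j)},
\]
where $T_n^{(1)}$ gathers the Abel boundary term $\gamma_{n+1}^{-1}(\theta_{n+1}-\theta_\star)/\sqrt{n+1}$ and the differences $(n+1)^{-1/2}\sum_{k=1}^n(\gamma_k^{-1}-\gamma_{k+1}^{-1})(\theta_k-\theta_\star)$; $T_n^{(2)}=-(n+1)^{-1/2}\sum_{k=0}^n r_{k+1}$; $T_n^{(3)}=-(n+1)^{-1/2}\sum_{k=0}^n R_k$; and $T_n^{(4)}$ absorbs the initial term $-\gamma_1^{-1}(\theta_0-\theta_\star)/\sqrt{n+1}$.

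The heart of the proof is to show that each $T_n^{(j)}\un_{\lim_q\theta_q=\theta_\star}\un_{\A_m}$ is $o_{\mathbb{P}}(1)$. For $T_n^{(1)}$, I would invoke Proposition~\ref{prop:L2norm:theta}, which yields $|\theta_k-\theta_\star|=O_{L^2}(\sqrt{\gamma_k})$ on the conditioning event: the boundary contribution is then $L^2$-bounded by $C\gamma_{n+1}^{-1/2}/\sqrt{n+1}$, and this vanishes because $n\gamma_n\to\infty$ from AVER\ref{hyp:Aver2}; the Abel-difference sum is dominated by $(n+1)^{-1/2}\sum_{k=1}^n\gamma_k^{-1/2}|1-\gamma_k/\gamma_{k+1}|$, which vanishes by the first limit in AVER\ref{hyp:Aver2}. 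The term $T_n^{(2)}$ is killed directly by AVER\ref{hyp:Aver4}\ref{hyp:Aver4:b}. For $T_n^{(3)}$ the quadratic remainder estimate combined again with Proposition~\ref{prop:L2norm:theta} provides an $L^1$-bound of order $(n+1)^{-1/2}\sum_{k=0}^n\gamma_k$, which tends to zero by the second limit in AVER\ref{hyp:Aver2}. Finally $T_n^{(4)}$ is deterministic of order $1/\sqrt{n+1}$.

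Once negligibility is established, one has on $\{\lim_q\theta_q=\theta_\star\}\cap\A_m$
\[
\sqrt{n+1}\,(\btheta_n-\theta_\star)=-\nabla h(\theta_\star)^{-1}\mathcal{E}_{n+1}+o_{\mathbb{P}}(1),
\]
the matrix $\nabla h(\theta_\star)$ being invertible because it is Hurwitz (C\ref{hyp:C1}\ref{hyp:C1:c}). Setting $s=-\nabla h(\theta_\star)^{-T}t$, I would then apply AVER\ref{hyp:Aver3}\ref{hyp:Aver3:c} in its equivalent version with $\un_{\A_m}$ inserted to compute the limit of $\PE[\un_{\lim_q\theta_q=\theta_\star}\un_{\A_m}\exp(is^T\mathcal{E}_{n+1})]$; since $s^TU_\star s=t^T\nabla h(\theta_\star)^{-1}U_\star\nabla h(\theta_\star)^{-T}t$, letting $m\to\infty$ and using $\lim_m\PP(\A_m\mid\lim_q\theta_q=\theta_\star)=1$ from AVER\ref{hyp:Aver3}\ref{hyp:Aver3:b} together with dominated convergence removes the indicator and delivers the stated characteristic function.

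The main obstacle, as already flagged after Theorem~\ref{theo:clt}, is that $\{e_n\}$ is not a martingale-increment sequence under $\PP(\cdot\mid\lim_q\theta_q=\theta_\star)$, so the whole analysis must stay under the original $\PP$ with the conditioning event and stabilization sets $\A_m$ carried as indicators, and the CLT is invoked only at the end via the already-conditional hypothesis AVER\ref{hyp:Aver3}\ref{hyp:Aver3:c}. A secondary difficulty is the Taylor-remainder bookkeeping: without the a priori $L^1$-control on $\gamma_k^{-1}|\theta_k-\theta_\star|^2$ furnished by Proposition~\ref{prop:L2norm:theta}, the quadratic term $T_n^{(3)}$ cannot be shown to vanish, which is precisely why that proposition is stated first and proved separately.
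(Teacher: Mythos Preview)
Your proof follows essentially the same route as the paper's: the same Abel/telescoping decomposition (the paper packages it via Lemma~\ref{lem:averaging}), the same four remainder pieces, the same use of Proposition~\ref{prop:L2norm:theta} together with AVER\ref{hyp:Aver2} for negligibility, and the same final appeal to AVER\ref{hyp:Aver3}\ref{hyp:Aver3:c} after localizing on $\A_m$. One small caveat on your reading of Proposition~\ref{prop:L2norm:theta}: it asserts $\gamma_n^{-1}|\theta_n-\theta_\star|^2\un_{\cdots}=O_{w.p.1}(1)\,O_{L^1}(1)$, a \emph{product} structure in which the first factor may have no moments, so you do not get $|\theta_k-\theta_\star|=O_{L^2}(\sqrt{\gamma_k})$ and the remainders $T_n^{(1)},T_n^{(3)}$ are shown to vanish only in probability (the paper keeps the $O_{w.p.1}(1)$ factor explicit and concludes via $\plim$), not in $L^1$ or $L^2$ as you write.
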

\paragraph{Sketch of the proof of Theorem~\ref{theo:clt:aver}}
The proof is detailed in Section~\ref{sec:proofs}. Since $\lim_m \PP(\A_m \vert
\lim_q \theta_q = \theta_\star) =1$, we only have to prove that for any $m \geq
1$ and $t \in \Rset^d$,
 \begin{multline*}
    \lim_n \PE\left[\un_{\lim_q \theta_q = \theta_\star} \un_{\A_m} \ \exp\left( i
        \sqrt{n} \ 
        t^T \left( \btheta_n - \theta_\star \right)\right) \right]  \\
    = \PE\left[\un_{\lim_q \theta_q = \theta_\star}  \un_{\A_m} \ \exp \left(-
        \frac{1}{2}t^T \nabla h(\theta_\star)^{-1} \ U_\star \ (\nabla
        h(\theta_\star)^{-1})^T t \right) \right] \eqsp.
  \end{multline*}
 We write
\[
\btheta_n - \theta_\star = - \frac{\nabla h(\theta_\star)^{-1}}{n+1}
\sum_{k=0}^n e_{k+1} + Z_n \eqsp.
\]
We show that $\sqrt{n} Z_n \un_{\lim_q \theta_q = \theta_\star} \un_{\A_m}$
converges to zero in probability for any $m \geq 1$; for this step, the main
tool is Proposition~\ref{prop:L2norm:theta}.  The proof is then concluded by
AVER\ref{hyp:Aver3}\ref{hyp:Aver3:c}.

\section{Application to  SA  with controlled Markov chain dynamics} 
\label{sec:appli}
Let $\{\K_n,n\geq 0 \}$ be a sequence of compact subsets of $\Theta$ such that
\[
\K_n \subseteq \K_{n+1} \eqsp, \qquad \qquad \bigcup_{n \geq 0} \K_n = \Theta \eqsp.
\]
Let $\{Q_\theta, \theta \in \Theta \}$ be a family of Markov transition kernels
onto $(\mathbb{X}, \mathcal{X})$.  We consider the following SA algorithm with
truncation at randomly varying bounds: $\theta_0 \in \K_0, \sigma_0 = 0$ and for
$n \geq 0$,
\begin{list}{}{}
\item set $\theta_{n+1/2} = \theta_n + \gamma_{n+1} H(\theta_n X_{n+1})$.
\item update
\[
(\theta_{n+1}, \sigma_{n+1}) = \left\{ 
  \begin{array}{ll}
(\theta_{n+1/2}, \sigma_n) \eqsp, & \ \text{if $\theta_{n+1/2} \in \K_{\sigma_{n}}$} \eqsp, \\
(\theta_0 , \sigma_n + 1)& \ \text{otherwise,}
  \end{array}
\right. 
\]\end{list}
where $\{X_n,n \geq 0 \}$ is a controlled Markov chain on $(\Omega,
\mathcal{A}, \PP)$ with conditional distribution given by
\begin{equation}
  \label{eq:appli:LawX}
  \PP(X_{n+1} \in A \vert \F_n) = Q_{\theta_n}(X_n, A) \eqsp, \qquad \F_n =
\sigma(\theta_0, X_0, \cdots, X_n) \eqsp.
\end{equation}
The random sequence $\{\sigma_n, n \geq 0 \}$ is a non-negative integer-valued
sequence counting the number of truncations.  Such a truncated SA was
introduced by Chen \etal~\cite{chen:guo:gao:1988} (see also Chen~\cite[Chapter
2]{chen:2002}) to address the boundedness problem of the SA sequence
$\{\theta_n, n \geq 0\}$.  A more general truncated SA algorithm with
controlled Markov chain dynamics is introduced in Andrieu
\etal~\cite{andrieu:moulines:priouret:2005}: when truncation occurs, both the
parameter $\theta_{n+1/2}$ and the draw $X_n$ used to obtain the next point
$X_{n+1}$ are modified.

The key point of the proof of convergence of this algorithm is to show that the
number of truncations is finite with probability one, so that after some random
time, the sequence $\{\theta_n, n\geq 0 \}$ is almost-surely bounded and obeys
the update rule $\theta_{n+1} = \theta_n + \gamma_{n+1} H(\theta_n, X_{n+1})$.
Conditions implying almost-sure boundedness and almost-sure convergence of the
sequence $\{\theta_n, n \geq 0 \}$ when $\{X_n,n \geq 0 \}$ is a controlled
Markov chain can be found in Andrieu \etal~\cite[Section
3]{andrieu:moulines:priouret:2005}.  Since in this paper we are interested in
CLT's, we will assume that
\begin{hypA} \label{hyp:appli:limit} 
  \begin{enumerate}[(a)]
\item \label{hyp:appli:ergo1} For any $\theta \in \Theta$, there exists a probability distribution
     $\pi_\theta$ on $(\mathbb{X}, \mathcal{X})$ such that $\pi_\theta Q_\theta
     = \pi_\theta$. Set 
     \begin{equation}
       \label{eq:appli:definition:h}
       h(\theta) = \int H(\theta,x) \ \pi_\theta(dx) \eqsp.
     \end{equation}
  \item \label{hyp:appli:limit:item} the number of truncations is finite with probability one: $\PP(\limsup_n
  \sigma_n < \infty) =1$ and there exists $\theta_\star \in \Theta$ satisfying
  C\ref{hyp:C1} such that $\PP(\lim_n \theta_n = \theta_\star) >0$.
  \end{enumerate}
\end{hypA}
 For simplicity, we consider the case when $H$ is bounded and the step-size is
 polynomially decreasing. Extensions to the case $H$ is unbounded can be done
 along the same lines as in Andrieu \etal~\cite{andrieu:moulines:priouret:2005}.
\begin{hypA}\label{hyp:appli:HandGamma}
  \begin{enumerate}[(a)]
  \item for any compact set $\K \subseteq \Theta$, $\sup_{\theta \in \K}
    \sup_{x \in \mathbb{X}} |H(\theta,x)| < \infty$.
  \item There exists $a \in (1/2, 1]$ such that $\gamma_n = \gamma_\star /
    n^a$. When $a=1$, $\gamma_\star$ satisfies the condition
    C\ref{hyp:C2}\ref{hyp:C2:a:2}.
  \end{enumerate}
\end{hypA}
 We assume that the transition kernels $\{Q_\theta, \theta \in \Theta \}$ satisfy
 \begin{hypA} \label{hyp:appli:ergo}
   \begin{enumerate}[(a)]
   \item \label{hyp:appli:ergo2} For any $\theta \in \Theta$, there exists a measurable function
     $\hH_\theta: (\mathbb{X}, \mathcal{X}) \to (\Rset^d, \mathcal{B}(\Rset^d))$ such that
     \begin{equation}
       \label{eq:appli:PoissonH}
       H(\theta,x) - h(\theta) = \hH_\theta(x) - Q_\theta \hH_\theta(x) \eqsp.
     \end{equation}
There exists a function $V_1: \mathcal{X} \to [1, \infty)$ such that for any
compact subset $\K \subseteq \Theta$, 
\begin{equation}
  \label{eq:appli:boundhH}
  \sup_{\theta \in \K} \sup_{x \in \mathbb{X}} \frac{|\hH_\theta(x)| + | Q_\theta
  \hH_\theta(x)|}{V_1(x)} < \infty \eqsp.
\end{equation}
\item \label{hyp:appli:ergo3} For any $\theta \in \Theta$, there exists a measurable function
     $U_\theta: (\mathbb{X}, \mathcal{X}) \to (\Rset^{d^2}, \mathcal{B}(\Rset^{d^2}))$ such that
     \begin{equation}
       \label{eq:appli:PoissonF}
       F_\theta(x) - \int F_\theta(x) \ \pi_\theta(dx) = U_\theta(x) - Q_\theta
U_\theta(x) \eqsp, 
     \end{equation}
where $F_\theta(x) = \int Q_\theta(x,dy) \,  \hH_\theta(y)\hH_\theta(y)^T - Q_\theta \hH_\theta(x) \ 
\left( Q_\theta \hH_\theta(x) \right)^T$.  There exists a function $V_2:
\mathcal{X} \to [1, \infty)$ such that for any compact subset $\K \subseteq
\Theta$, 
\begin{equation}
  \label{eq:appli:boundU}
\sup_{\theta \in \K} \sup_{x \in \mathbb{X}} \frac{|U_\theta(x)|  + |Q_\theta U_\theta(x)|}{V_2(x)} <
\infty \eqsp.
\end{equation}
\item \label{hyp:appli:ergo4} There exist $\delta, \tau, \bar{\tau} >0$ such
  that for any $m \geq 1$,
  \begin{align*}
    &\sup_{k \geq m} \PE\left[\left( V_1^{2+\tau}(X_{k+1}) + V_2^{1+\bar
          \tau}(X_{k+1}) \right) \un_{ \bigcap_{m \leq j \leq k} \{|\theta_j -
        \theta_\star| \leq
        \delta\}} \right] < \infty \eqsp, \\
  &  \PE\left[ V_1^{2+\tau}(X_{m}) + V_2^{1+\bar \tau}(X_{m})
    \right] < \infty \eqsp.
  \end{align*}
 \item \label{hyp:appli:ergo5} 
For any compact subset $\K \subseteq \Theta$, there exist $b,C>0$ such that for any $\theta, \theta' \in \K$,
\begin{align*}
 &  \left| Q_\theta \hH_\theta(x) - Q_{\theta'} \hH_{\theta'}(x) \right| \leq C \, |\theta - \theta'|^{1/2+b} \,  V_1(x)  \eqsp, \\
 &  \left| U_\theta(x) - U_{\theta'}(x) \right| \leq C \, |\theta - \theta'|^b \,
V_2(x) \eqsp.
\end{align*}
Furthermore, almost-surely
\[
\lim_n \left( \int F_{\theta_n}(x) \ \pi_{\theta_n}(dx) - \int
  F_{\theta_\star}(x) \ \pi_{\theta_\star}(dx)\right) \un_{\lim_q \theta_q =
  \theta_\star} = 0 \eqsp.
\]   \end{enumerate}
 \end{hypA}
 Conditions implying the existence of $\pi_\theta$ and solutions to the Poisson
 equations (\ref{eq:appli:PoissonH}) and (\ref{eq:appli:PoissonF}) can be found
 e.g. in Hernandez-Lerma and Lasserre~\cite[Chapter 8]{hernandez:lasserre:2003}
 or in Meyn and Tweedie~\cite[Chapter 17]{meyn:tweedie:2009}.  When the
 transition kernel $Q_\theta$ is uniformly ergodic, then $V_1=V_2$ and is equal
 to the constant function $1$. When the kernel is $V$-geometrically ergodic, we
 can choose $V_1=V^{1/p}, V_2 = V^{2/p}$ for any $p \geq 2$. Sufficient
 conditions for \eqref{eq:appli:boundhH} and \eqref{eq:appli:boundU} based on
 Lyapunov drift inequalities when the chain is geometrically ergodic (resp.
 subgeometrically ergodic) are given by Fort \etal~\cite[Lemma
 2.3]{fort:moulines:priouret:2012} (resp.  Andrieu
 \etal~\cite{andrieu:fort:vihola:2013}.  Andrieu \etal~\cite[Proposition
 6.1.]{andrieu:moulines:priouret:2005} gives sufficient conditions to check
 A\ref{hyp:appli:ergo}\ref{hyp:appli:ergo4} (compare this assumption with the
 condition A3(ii) of Andrieu \etal) when the kernels are $V$-geometrically
 ergodic: in this case, for any $p \geq 2$ we can choose $V_1=V^{1/p}, V_2 =
 V^{2/p}$ and $2(1+\bar \tau)/p =1$. The first set of conditions in
 A\ref{hyp:appli:ergo}\ref{hyp:appli:ergo5} is an assumption on the
 regularity-in-$\theta$ of the solution to the Poisson equation.  Andrieu
 \etal~\cite[Proposition 6.1.]{andrieu:moulines:priouret:2005} give sufficient
 conditions in terms of the regularity-in-$\theta$ of the transition kernels
 $Q_\theta$. When $\pi_\theta = \pi$ for any $\theta$, the second set of
 conditions can be established by combining smoothness-in-$\theta$ properties
 of the function $F_\theta$ and the dominated convergence theorem. When
 $\pi_\theta$ depends on $\theta$, Fort \etal~\cite[Theorem 2.11 and
 Proposition 4.3]{fort:moulines:priouret:2012} give sufficient conditions for
 this condition to hold.

 We now show how these assumptions imply the conditions C\ref{hyp:C1} to
 C\ref{hyp:C2}. Under A\ref{hyp:appli:limit}\ref{hyp:appli:limit:item}, the condition C\ref{hyp:C1}
 holds; note also that the conditional probability $\PP(\cdot \vert \lim_q
 \theta_q = \theta_\star)$ is well defined. By using
 (\ref{eq:appli:definition:h}) and (\ref{eq:appli:PoissonH}), we write the
 truncated SA algorithm on the form (\ref{eq:definition:theta}) by setting
 \begin{align*}
   e_{n+1} & = \hH_{\theta_n}(X_{n+1}) - Q_{\theta_n} \hH_{\theta_n}(X_{n}) \eqsp, \\
   r_{n+1} & = Q_{\theta_n}\hH_{\theta_n}(X_n) - Q_{\theta_n}
   \hH_{\theta_n}(X_{n+1}) + (\theta_0 - \theta_{n+1/2}) \un_{\theta_{n+1/2} \notin \K_{\sigma_n}} \eqsp.
 \end{align*}
 Let us prove that the condition C\ref{hyp:C3} holds. Since $\theta_n \in
 \F_n$, Eq.~(\ref{eq:appli:LawX}) implies C\ref{hyp:C3}\ref{hyp:C3:a}. Fix
 $\delta$ such that $B(\theta_\star, \delta) = \{\theta \in \Rset^d, | \theta -
 \theta_\star | \leq \delta \} \subseteq \Theta$. Set 
\[
\A_{m,k} = \left\{
  \begin{array}{ll}
\emptyset & \ \text{if $k<m$}, \\
\bigcap_{m \leq j \leq k} \{ |\theta_j - \theta_\star| \leq \delta,  \theta_j = \theta_{j-1/2} \} & \ \text{otherwise.}
  \end{array}
\right.
\]
Then for any $k,m,$ $\A_{m,k} \in \F_k$; $\lim_k \A_{m,k} = \A_m$ where $\A_m =
\bigcap_{j \geq m} \{ | \theta_j - \theta_\star| \leq \delta, \theta_j =
\theta_{j -1/2} \}$; and $\lim_m \PP(\A_m \vert \lim_q \theta_q = \theta_\star)
=1$ by A\ref{hyp:appli:limit}\ref{hyp:appli:limit:item}. Fix $m \geq 1$; by (\ref{eq:appli:boundhH})
applied with $\K = B(\theta_\star, \delta)$, there exists a constant $C$ such
that
\[
\PE\left[ |e_{k+1}|^{2+\tau} \un_{\A_{m,k}} \right] \leq C \, \PE\left[ \left(
    V_1^{2+\tau}(X_k) + V_1^{2+\tau}(X_{k+1}) \right) \un_{\A_{m,k}} \right] \eqsp,
\]
A\ref{hyp:appli:ergo}\ref{hyp:appli:ergo4} concludes the proof of
C\ref{hyp:C3}\ref{hyp:C3:b}. Observe that $ \PE\left[e_{k+1} e_{k+1}^T \vert
  \F_k \right] = F_{\theta_k}(X_k)$.  By using (\ref{eq:appli:PoissonF}), we
write $\PE\left[ e_{k+1} e_{k+1}^T \vert \F_k \right] = U_\star + D_k^{(1)} +
D_k^{(2,a)} +D_k^{(2,b)}$ with
\begin{align*}
  U_\star & = \int F_{\theta_\star}(x) \ \pi_{\theta_\star}(dx) \eqsp, \\
  D_k^{(1)} & = \int  F_{\theta_k}(x) \ \pi_{\theta_k}(dx) - \int F_{\theta_\star}(x) \ \pi_{\theta_\star}(dx) \eqsp, \\
  D_k^{(2,a)} & =  U_{\theta_k}(X_{k+1}) - Q_{\theta_k}
    U_{\theta_k}(X_k) \eqsp, \\
D_k^{(2,b)} & =  U_{\theta_k}(X_k) - U_{\theta_k}(X_{k+1}) \eqsp.
\end{align*}
By A\ref{hyp:appli:ergo}\ref{hyp:appli:ergo5}, $D_k^{(1)} \aslim 0$ on the set
$\{\lim_q \theta_q = \theta_\star \}$.  By (\ref{eq:appli:LawX}),
$\PE\left[D_k^{(2,a)} \vert \F_{k-1} \right]=0$; by application of the
Burkholder inequality (see e.g. Hall and Heyde~\cite[Theorem
2.10]{hall:heyde:1980}), it holds for any $A_k \in \F_k$ such that $\lim_k A_k
= \{\lim_q \theta_q = \theta_\star \}$
\[
\PE\left[ \left| \sum_{k=1}^n D_{k}^{(2,a)} \un_{A_{k}} \un_{\A_{m,k}}
  \right|^{1+\bar \tau} \right] \leq C \, n^{1 \vee (1+\bar \tau)/2} \eqsp.
\]
The constant $C$ is finite since under
A\ref{hyp:appli:ergo}\ref{hyp:appli:ergo4}, $\sup_k \PE\left[ | D_k^{(2,a)}
  |^{1+ \bar \tau} \un_{\A_{m,k}} \right] < \infty$.  Furthermore,
\[
\sum_{k=m}^n D_k^{(2,b)} = U_{\theta_m}(X_m) - U_{\theta_n}(X_{n+1}) +
\sum_{k=m+1}^n \left( U_{\theta_k}(X_k) - U_{\theta_{k-1}}(X_k)\right)
\]
so that by A\ref{hyp:appli:ergo}\ref{hyp:appli:ergo4}-\ref{hyp:appli:ergo5},
there exists a constant $C$ such that 
\[
\PE\left[ \left| \sum_{k=1}^n D_k^{(2,b)}\right|^{1+\bar \tau} \un_{\A_m}
  \un_{\lim_q \theta_q = \theta_\star} \right] \leq C \left(1 +
  \sum_{k=m}^{n-1} \gamma_{k}^{b(1+\bar \tau)} \right) \eqsp.
\]
The above discussion shows that C\ref{hyp:C3}\ref{hyp:C3:c} is verified if $a
> 1/2 \vee 1/(1+\bar \tau)$.

Finally, let us study $r_n$. We write $r_{n+1} = r_{n+1}^{(1)} + r_{n+1}^{(2)}$ with
\[
r_{n+1}^{(1)} = \left( \theta_0 -\theta_{n+1/2} \right) \un_{\theta_{n+1/2} \notin
  \K_{\sigma_n}} + Q_{\theta_{n+1}} \hH_{\theta_{n+1}}(X_{n+1}) - Q_{\theta_n}
\hH_{\theta_n}(X_{n+1}) \eqsp.
\]
By A\ref{hyp:appli:limit}\ref{hyp:appli:limit:item} and A\ref{hyp:appli:ergo}\ref{hyp:appli:ergo5},
$\gamma_n^{-1/2} r_n^{(1)} \un_{\lim_q \theta_q = \theta_\star} \un_{\A_m}
=o_{w.p.1}(1) + o_{L^1}(1)$ for any fixed $m \geq 1$. In addition, by
(\ref{eq:appli:boundhH}), there exists a constant $C$ such that
\[
\PE\left[ \left| \sum_{k=1}^n r_k^{(2)} \right| \un_{\A_m} \right] \leq
\PE\left[V_1(X_1) \right] +  \PE\left[V(X_{n+1}) \un_{\A_m} \right] \eqsp;
\]
and by A\ref{hyp:appli:ergo}\ref{hyp:appli:ergo4}, this term is uniformly
bounded in $n$.

The above discussion is summarized in the following  proposition
\begin{prop}
  Assume A\ref{hyp:appli:limit}, A\ref{hyp:appli:HandGamma} and
  A\ref{hyp:appli:ergo}. If $a \in (1/2 \vee 1/(1+\bar \tau), 1]$, the conditions
  C\ref{hyp:C1} to C\ref{hyp:C2} are satisfied  and
\[
U_\star = \int \pi_{\theta_\star}(dx) \left( \hH_{\theta_\star}(x) \, \hH_{\theta_\star}(x)^T -
  Q_{\theta_\star} \hH_{\theta_\star}(x) \ \left( Q_{\theta_\star} \hH_{\theta_\star}(x) \right)^T \right) \eqsp.
\]
\end{prop}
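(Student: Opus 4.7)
The plan is to verify each of C\ref{hyp:C1}, C\ref{hyp:C3}, C\ref{hyp:C4}, C\ref{hyp:C2} in turn, working from the explicit decomposition of the truncated controlled-Markov-chain recursion into the canonical form~(\ref{eq:definition:theta}). Using the Poisson equation~(\ref{eq:appli:PoissonH}), write
$H(\theta_n,X_{n+1}) - h(\theta_n) = e_{n+1} + r_{n+1}$
with $e_{n+1} = \hH_{\theta_n}(X_{n+1}) - Q_{\theta_n}\hH_{\theta_n}(X_n)$, and with $r_{n+1}$ collecting the reversed telescoping term $Q_{\theta_n}\hH_{\theta_n}(X_n) - Q_{\theta_n}\hH_{\theta_n}(X_{n+1})$ and the truncation correction $(\theta_0 - \theta_{n+1/2})\un_{\theta_{n+1/2}\notin \K_{\sigma_n}}$. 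Condition C\ref{hyp:C1} is immediate from A\ref{hyp:appli:limit}\ref{hyp:appli:limit:item}, and~(\ref{eq:appli:LawX}) combined with $\PE[\hH_{\theta_n}(X_{n+1})\,|\,\F_n] = Q_{\theta_n}\hH_{\theta_n}(X_n)$ gives C\ref{hyp:C3}\ref{hyp:C3:a}.

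For C\ref{hyp:C3}\ref{hyp:C3:b}, I would fix $\delta>0$ with $\{|\theta-\theta_\star|\leq\delta\}\subset\Theta$ and set $\A_{m,k} = \bigcap_{m\leq j\leq k}\{|\theta_j - \theta_\star|\leq \delta,\ \theta_j = \theta_{j-1/2}\}$ for $k\geq m$, so that $\lim_k \un_{\A_{m,k}} = \un_{\A_m}$ with $\A_m$ the analogous infinite intersection; the fact that $\lim_m \PP(\A_m\mid \lim_q\theta_q = \theta_\star)=1$ follows because the number of truncations is a.s.\ finite by A\ref{hyp:appli:limit}\ref{hyp:appli:limit:item} and $\theta_n \to \theta_\star$ on that event. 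The required $(2+\tau)$-moment bound then comes from applying~(\ref{eq:appli:boundhH}) to the compact $\K = \{|\theta-\theta_\star|\leq\delta\}$ and invoking the moment bound in A\ref{hyp:appli:ergo}\ref{hyp:appli:ergo4}.

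For the variance structure C\ref{hyp:C3}\ref{hyp:C3:c}, I would use the relaxed form~(\ref{eq:NewAssumption:C2C}) and the second Poisson equation~(\ref{eq:appli:PoissonF}) to decompose
\[
\PE[e_{k+1}e_{k+1}^T\mid\F_k] = F_{\theta_k}(X_k) = U_\star + D_k^{(1)} + D_k^{(2,a)} + D_k^{(2,b)}
\]
with the four terms as in the excerpt. The term $D_k^{(1)}$ converges a.s.\ to zero on $\{\lim_q\theta_q = \theta_\star\}$ by the second part of A\ref{hyp:appli:ergo}\ref{hyp:appli:ergo5}. The term $D_k^{(2,a)}$ is a martingale difference, so I would apply Burkholder's inequality with the $(1+\bar\tau)$-moment bound from A\ref{hyp:appli:ergo}\ref{hyp:appli:ergo4} to get $\|\sum_{k=1}^n D_k^{(2,a)}\un_{A_k}\un_{\A_{m,k}}\|_{1+\bar\tau} \lesssim n^{1\vee(1+\bar\tau)/2}$. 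The term $D_k^{(2,b)}$ telescopes up to a boundary term plus $\sum_{k=m+1}^n (U_{\theta_k}(X_k) - U_{\theta_{k-1}}(X_k))$, which the Hölder-type bound in A\ref{hyp:appli:ergo}\ref{hyp:appli:ergo5} controls by $C\sum_k \gamma_k^b V_2(X_k)$ on $\A_m$. Multiplying by $\gamma_n \sim n^{-a}$ forces all of these into $o_{L^1}(1)$ precisely when $a > 1/2 \vee 1/(1+\bar\tau)$, which is where the threshold in the statement enters.

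For C\ref{hyp:C4}, I would split $r_{n+1} = r_{n+1}^{(1)} + r_{n+1}^{(2)}$ with $r_{n+1}^{(1)}$ collecting the truncation correction (eventually null on $\A_m$ by A\ref{hyp:appli:limit}\ref{hyp:appli:limit:item}) and the regularity remainder $Q_{\theta_{n+1}}\hH_{\theta_{n+1}}(X_{n+1}) - Q_{\theta_n}\hH_{\theta_n}(X_{n+1}) = O(|\theta_{n+1}-\theta_n|^{1/2+b} V_1(X_{n+1})) = O(\gamma_{n+1}^{1/2+b} V_1(X_{n+1}))$ via A\ref{hyp:appli:ergo}\ref{hyp:appli:ergo5}, and $r_{n+1}^{(2)} = Q_{\theta_n}\hH_{\theta_n}(X_n) - Q_{\theta_n}\hH_{\theta_n}(X_{n+1})$, whose partial sums telescope to a boundary term controlled in $L^1$ through~(\ref{eq:appli:boundhH}) and A\ref{hyp:appli:ergo}\ref{hyp:appli:ergo4}. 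Condition C\ref{hyp:C2} is built directly into A\ref{hyp:appli:HandGamma}(b). The identification of $U_\star$ in the conclusion is read off from~(\ref{eq:appli:PoissonF}) at $\theta = \theta_\star$. I expect the main technical obstacle to be the Burkholder estimate on $D_k^{(2,a)}$ and its matching against the prefactor $\gamma_n$ in~(\ref{eq:NewAssumption:C2C}); this is the single step that pins down the required lower bound on $a$.
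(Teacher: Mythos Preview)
Your proposal is correct and follows the paper's own argument essentially step by step: the same decomposition of $e_{n+1}$ and $r_{n+1}$ via the Poisson equation, the same choice of $\A_{m,k}$, the same Burkholder/telescoping/H\"older treatment of $D_k^{(2,a)}$ and $D_k^{(2,b)}$ leading to the threshold $a>1/2\vee 1/(1+\bar\tau)$, and the same split of $r_n$ for C\ref{hyp:C4}. One small slip to fix: once you place the regularity remainder $Q_{\theta_{n+1}}\hH_{\theta_{n+1}}(X_{n+1}) - Q_{\theta_n}\hH_{\theta_n}(X_{n+1})$ into $r_{n+1}^{(1)}$, the complementary piece must be $r_{n+1}^{(2)} = Q_{\theta_n}\hH_{\theta_n}(X_n) - Q_{\theta_{n+1}}\hH_{\theta_{n+1}}(X_{n+1})$ (not $Q_{\theta_n}\hH_{\theta_n}(X_n) - Q_{\theta_n}\hH_{\theta_n}(X_{n+1})$), since it is this version whose partial sums genuinely telescope to a single boundary term---this is exactly the paper's choice.
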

By application of Theorem~\ref{theo:clt}, we obtain a CLT for randomly
truncated SA with controlled Markov chain dynamics. Our result improves on
Delyon~\cite[Theorem 25]{delyon:2000}. Under stronger conditions (for example, it is
assumed that $V_1$ and $V_2$ are bounded functions; there is a single target
$\theta_\star$), Delyon~\cite{delyon:2000} establishes a CLT in the case $\gamma_n =
\gamma_\star /n^a$ with the condition $a \in (2/3,1]$. Note that if $V_1,V_2$ are bounded
then A\ref{hyp:appli:ergo}\ref{hyp:appli:ergo4} holds with any $\bar \tau>0$ so
that our approach only requires $a \in (1/2, 1]$ which is the usual range of
values for SA algorithms.

Using similar tools, the conditions of Theorem~\ref{theo:clt:aver} can be
verified; details are left to the interested reader.

\section{Proof}\label{sec:proofs}
\subsection{Definitions and Notations}
\label{sec:notations}
Let $\{A_n,n \geq 0\}$ be a sequence of sets such that
\begin{equation}
  \label{eq:definition:Ansequence}
 A_n \in \F_n \eqsp, \qquad \qquad  \lim_n
\un_{A_n} = \un_{\lim_q \theta_q = \theta_\star} \qquad \text{w.p.1} \eqsp.
\end{equation}
Such a sequence exists by Lemma~\ref{lem:PierreP}. Define recursively two sequences 
\begin{eqnarray}
  \label{eq:definition:mu}
  \mu_{n+1}&=&(\Id+\gamma_{n+1} \nabla h(\theta_{\star}))\mu_{n}+\gamma_{n+1}e_{n+1}  \eqsp, \qquad \qquad \mu_{0}=0 \eqsp; \\
  \label{eq:definition:rho}
  \rho_{n+1}&=&\theta_{n+1}-\theta_{\star}-\mu_{n+1} \eqsp, \qquad \qquad  \rho_{0}=\theta_{0}-\theta_{\star} \eqsp;
\end{eqnarray}
and the matrices $\psi_\star(n,k)$ for $1 \leq k\leq n$,
\begin{equation}
\label{eq:definition:psistar}
 \psi_{\star}(n, k) = \prod_{j=k}^{n}(\Id+\gamma_{j}\nabla h(\theta_{\star})) \eqsp.
\end{equation}
By convention, $\psi_{\star} (n, n +1) = \Id$. Under
C\ref{hyp:C1}\ref{hyp:C1:a}-\ref{hyp:C1:b}, there exist a set of random $d
\times d$ symmetric matrices $\{R_{i}^{(n)},\ i\leq d\}$ such that the entry
$i$ of the column vector $\{h(\theta_{n})-\nabla
h(\theta_{\star})(\theta_{n}-\theta_{\star})\}$ is equal to $\quad
(\theta_{n}-\theta_{\star})^{T}R_{i}^{(n)}(\theta_{n}-\theta_{\star})$ \quad.
More precisely,
\begin{equation}
  \label{eq:definition:matrixR}
  R_{i}^{(n)}(k,\ l) = \int_{0}^{1}\frac{1}{2}(1-t)^{2}\frac{\partial^{2}h_{i}}{\partial\theta_{k}\partial\theta_{l}}(\theta_{n}+t(\theta_{n}-\theta_{\star})) \ dt \eqsp. 
\end{equation}
Let $R_\bullet^{(n)}$ be the tensor such that 
\begin{equation}
  \label{eq:Taylor:h}
  h(\theta_{n}) = \nabla
h(\theta_{\star})(\theta_{n}-\theta_{\star}) +  (\theta_{n}-\theta_{\star})^{T}R_{\bullet}^{(n)}(\theta_{n}-\theta_{\star}) \eqsp.
\end{equation}
Finally, for $1 \leq k\leq n$, define the $d \times d$ matrices
\begin{equation}
  \label{eq:definition:psi}
  \psi(n, k) = \prod_{j=k}^{n}(\Id+\gamma_{j} \{\nabla h(\theta_{\star})+2\mu_{j-1}^{T}R^{(j-1)}_\bullet+\rho_{j-1}^{T}R^{(j-1)}_\bullet\}) \eqsp,
\end{equation}
with the convention that $\psi(n,n+1) = \Id$.

\subsection{Preliminary results on the sequence $\{\mu_n, n\geq 0\}$}
By iterating (\ref{eq:definition:mu}), we have by definition of $\psi_\star$
(see (\ref{eq:definition:psistar}))
\begin{equation}
  \label{eq:expression:mu}
  \mu_{n+1}=\sum_{k=1}^{n+1}\gamma_{k}\psi_{\star}(n+1,k+1)e_{k} \eqsp.
\end{equation}

\begin{prop}
  \label{prop:mu:ps-L2}
  Assume C\ref{hyp:C1}\ref{hyp:C1:b}-\ref{hyp:C1:c},
  C\ref{hyp:C3}\ref{hyp:C3:a}-\ref{hyp:C3:b} and C\ref{hyp:C2}. Then
  \begin{enumerate}[(i)]
  \item $\mu_n \un_{\lim_q \theta_q = \theta_\star} \aslim 0$ when $n \to
    \infty$.
  \item for any $m \geq 1$, $\gamma_{k}^{-1}|\mu_{k}|^{2} \ \un_{\lim_q
      \theta_q = \theta_\star} \un_{\A_m} = O_{L^1}(1) +o_{w.p.1}(1)$.
  \end{enumerate}
\end{prop}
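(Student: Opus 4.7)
The plan is to exploit the explicit martingale-convolution representation \eqref{eq:expression:mu} together with the truncation device from C\ref{hyp:C3}\ref{hyp:C3:b}. Fix $m \geq 1$, decompose $e_k = e_k^{(1)} + e_k^{(2)}$ with $e_k^{(1)} = e_k \un_{\A_{m,k-1}}$ and $e_k^{(2)} = e_k (1-\un_{\A_{m,k-1}})$, and correspondingly $\mu_n = \mu_n^{(1)} + \mu_n^{(2)}$ with
\[
\mu_n^{(j)} = \sum_{k=1}^n \gamma_k \psi_\star(n, k+1) e_k^{(j)} \eqsp.
\]
Since $\A_{m,k-1} \in \F_{k-1}$, $\{e_k^{(1)}\}$ is an $\F_k$-adapted martingale-increment sequence, and C\ref{hyp:C3}\ref{hyp:C3:b} yields $\sup_k \PE[|e_k^{(1)}|^2] < \infty$. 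On $\A_m \cap \{\lim_q \theta_q = \theta_\star\}$, the hypothesis $\un_{\A_{m,k}} \un_{\lim_q \theta_q = \theta_\star} \to \un_{\A_m} \un_{\lim_q \theta_q = \theta_\star}$ provides a random $N < \infty$ such that $e_k^{(2)} = 0$ for all $k > N$.

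The deterministic cornerstone is a uniform estimate on the transition matrices $\psi_\star$. Because $\nabla h(\theta_\star)$ is Hurwitz with largest real part $-L$ and $\gamma_n \to 0$, one has $|\Id + \gamma_n \nabla h(\theta_\star)|^2 \leq 1 - L \gamma_n + O(\gamma_n^2)$ for large $n$; under either case of C\ref{hyp:C2} this yields constants $C, L' > 0$ and
\[
|\psi_\star(n, k+1)|^2 \leq C \exp\bigl(-2 L' \textstyle\sum_{j=k+1}^n \gamma_j\bigr) \eqsp, \qquad \sum_{k=1}^n \gamma_k^2 |\psi_\star(n, k+1)|^2 \leq C \gamma_n \eqsp.
\]
Establishing these bounds uniformly under both alternatives of C\ref{hyp:C2} is the main technical obstacle: it relies on a discrete Abel/Gronwall argument using $\log(\gamma_{k-1}/\gamma_k) = o(\gamma_k)$ (resp. $\gamma_\star > 1/(2L)$) to extract the factor $\gamma_n$ from the summation.

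For (ii), martingale orthogonality and the summation bound give $\PE[|\mu_n^{(1)}|^2] \leq C \sum_{k=1}^n \gamma_k^2 |\psi_\star(n, k+1)|^2 \leq C \gamma_n$, so $\gamma_n^{-1} |\mu_n^{(1)}|^2 = O_{L^1}(1)$. On $\A_m \cap \{\lim_q \theta_q = \theta_\star\}$ the remainder $\mu_n^{(2)}$ reduces to the fixed finite sum $\sum_{k \leq N} \gamma_k \psi_\star(n, k+1) e_k^{(2)}$ whose weights decay exponentially in $n$; C\ref{hyp:C2} ensures $\gamma_n^{-1/2} \exp(-L' \sum_{j=k+1}^n \gamma_j) \to 0$ for each fixed $k$ (the polynomial-type decay of $\gamma_n$ is dominated by the exponential one), so $\gamma_n^{-1} |\mu_n^{(2)}|^2 \un_{\A_m \cap \{\lim_q \theta_q = \theta_\star\}} = o_{w.p.1}(1)$. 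Combining with $|\mu_n|^2 \leq 2|\mu_n^{(1)}|^2 + 2|\mu_n^{(2)}|^2$ yields (ii).

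For (i), the a.s. decay of $\mu_n^{(2)}$ on $\A_m \cap \{\lim_q \theta_q = \theta_\star\}$ is already in hand. For $\mu_n^{(1)}$, squaring the recursion $\mu_{n+1}^{(1)} = (\Id + \gamma_{n+1} \nabla h(\theta_\star)) \mu_n^{(1)} + \gamma_{n+1} e_{n+1}^{(1)}$ and taking conditional expectations yields a Robbins-Siegmund inequality with contraction $1 - L\gamma_{n+1} + O(\gamma_{n+1}^2)$ and summable perturbation $\gamma_{n+1}^2 \PE[|e_{n+1}^{(1)}|^2 | \F_n]$; the Robbins-Siegmund theorem then gives $|\mu_n^{(1)}| \to 0$ almost surely. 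Hence $\mu_n \un_{\A_m \cap \{\lim_q \theta_q = \theta_\star\}} \to 0$ a.s., and sending $m \to \infty$ while using $\lim_m \PP(\A_m \vert \lim_q \theta_q = \theta_\star) = 1$ concludes (i).
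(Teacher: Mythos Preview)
Your treatment of part~(ii) and of the remainder $\mu_n^{(2)}$ matches the paper's argument essentially line for line. The genuine difference is in part~(i) for $\mu_n^{(1)}$: the paper does not use a Robbins--Siegmund recursion but an Abel summation. It introduces the tail martingale $S_n \eqdef \sum_{k\geq n}\gamma_k e_k \un_{\A_{m,k-1}}$ (well defined in $L^2$ by C\ref{hyp:C3}\ref{hyp:C3:b} and $\sum_k\gamma_k^2<\infty$, hence $S_n\to 0$ a.s.), Abel-transforms $\mu_{n+1}^{(1)}$ into $-S_{n+2}+\psi_\star(n+1,2)S_1-\sum_{k=2}^{n}\gamma_k\psi_\star(n+1,k+1)\nabla h(\theta_\star)S_k$, and concludes from $\limsup_n\sum_k\gamma_k|\psi_\star(n+1,k+1)|<\infty$ together with $|\psi_\star(n+1,\ell)|\to 0$. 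Your Robbins--Siegmund route is a legitimate alternative and arguably more self-contained, since it avoids the tail sums; the paper's Abel approach, in turn, recycles the same product bound on $\psi_\star$ already needed for~(ii) and invokes no supermartingale theorem.

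One point in your sketch does need repair. The one-step estimate $|\Id+\gamma_n\nabla h(\theta_\star)|^2\leq 1-L\gamma_n+O(\gamma_n^2)$ is \emph{false} in the Euclidean operator norm when $\nabla h(\theta_\star)$ is non-normal: a Hurwitz matrix can satisfy $|\Id+\gamma H|>1$ for all small $\gamma>0$. The paper's bound on $|\psi_\star(n,k)|$ absorbs this into a front constant $C$ via a Jordan-basis change, so it gives no pointwise contraction. For your Robbins--Siegmund step you must therefore work with a Lyapunov quadratic form $V(\mu)=\mu^T P\mu$ where $P>0$ solves $\nabla h(\theta_\star)^T P+P\nabla h(\theta_\star)=-Q$ with $Q>0$; then $\PE[V(\mu_{n+1}^{(1)})\mid\F_n]\leq(1-c\gamma_{n+1}+O(\gamma_{n+1}^2))V(\mu_n^{(1)})+\gamma_{n+1}^2\,\PE[e_{n+1}^{(1)T}Pe_{n+1}^{(1)}\mid\F_n]$ holds, the perturbation is summable a.s.\ because $\PE\bigl[\sum_n\gamma_{n+1}^2\PE[|e_{n+1}^{(1)}|^2\mid\F_n]\bigr]<\infty$, and Robbins--Siegmund plus $\sum_n\gamma_n=\infty$ gives $V(\mu_n^{(1)})\to 0$ a.s.\ as you claim.
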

\begin{proof}
 Let $m \geq 1$ be fixed.  Set
$\mu_{n+1} = \mu_{n+1}^{(1)} + \mu_{n+1}^{(2)}$, with
\[
\mu_{n+1}^{(1)} = \sum_{k=1}^{n+1} \gamma_k
\psi_{\star}(n+1,k+1)e_{k} \un_{\A_{m,k-1}} \eqsp.
\]
\textit{(i)} Since
\[
\PP\left( \bigcup_m \A_m \Big \vert \lim_q \theta_q = \theta_\star \right) \geq
\lim_{M} \PP\left( \A_M \big \vert \lim_q \theta_q =\theta_\star\right) =1 \eqsp,
\]
we only have to prove that for any $m \geq 1$, $\lim_n \mu_n \un_{\A_m}
\un_{\lim_q \theta_q = \theta_\star} \aslim 0$.  Let $m\geq 1$.  Let us first
consider $\mu_n^{(1)}$ and define $\quad S_{n} \eqdef \sum_{k\geq
  n}\gamma_{k}e_{k}\un_{\A_{m,k-1}} \quad$.  By
(\ref{eq:expression:mu}) and the Abel transform, we have
\begin{align}
\label{eq:Abelsurmu1}
  \mu_{n+1}^{(1)} & =\sum_{k=1}^{n+1}\psi_{\star}(n+1, k+1)(S_{k}-S_{k+1})  \nonumber \\
  & =
  (S_{n+1}-S_{n+2})+\sum_{k=1}^{n}\psi_{\star}(n+1,k+1)S_{k}-\sum_{k=2}^{n+1}\psi_{\star}(n+1,
  k)S_{k} \nonumber \\
  & =
  -S_{n+2}+\psi_{\star}(n+1,2)S_{1}+\sum_{k=2}^{n}(\psi_{\star}(n+1,k+1)-\psi_{\star}(n+1,
  k))S_{k} \nonumber \\
&= -S_{n+2}+\psi_{\star}(n+1,2)S_{1}-\sum_{k=2}^{n}\gamma_{k}\psi_{\star}(n+1, k+1)\nabla h(\theta_{\star})S_{k}
\end{align}
where we used (\ref{eq:definition:psistar}) in the last equality. Under C\ref{hyp:C1}\ref{hyp:C1:b}-\ref{hyp:C1:c} and C\ref{hyp:C2},
Lemmas~\ref{lem:Hurwitz:limit} and \ref{lem:Hurwitz:aver} yield for any fixed
$\ell \geq 1$
\begin{equation}
  \label{eq:auxsurpsistar}
  \lim\sup_{n}\sum_{k=2}^{n}\gamma_{k} | \psi_{\star}(n+1, k+1)| < \infty \eqsp,
\qquad \qquad \lim_n|\psi_{\star}(n+1, \ell) | =0\eqsp.
\end{equation}
Under C\ref{hyp:C3}\ref{hyp:C3:a}, for any $\ell \geq 1$, $\PE\left[|S_\ell|^2
\right] \leq \sum_{k\geq \ell}\gamma_{k}^2 \ \PE\left[|e_{k}|^2
  \un_{\A_{m,k-1}} \right]$.  By C\ref{hyp:C2} and C\ref{hyp:C3}\ref{hyp:C3:b},
the rhs is finite for any $\ell \geq 1$, thus implying that (a) $S_\ell$ is
finite w.p.1.  and (b) $\lim_n S_n = 0$ w.p.1.  (\ref{eq:Abelsurmu1}),
(\ref{eq:auxsurpsistar}) and these properties of $S_n$ imply that $\mu_n^{(1)}
\un_{\A_m} \aslim 0$ when $n \to \infty$.

Let us now consider $\mu_n^{(2)}$. By C\ref{hyp:C3}\ref{hyp:C3:b}, there exists
a random index $K$ such that for any $k \geq K$, $(1-\un_{\A_{m,k}}) \un_{\A_m}
\un_{\lim_q \theta_q = \theta_\star} =0$. Hence, for any $n \geq K$,
\begin{equation}
  \label{eq:munTronque}
  \mu_{n+1}^{(2)} \un_{\A_m} \un_{\lim_q \theta_q = \theta_\star} = \sum_{k=1}^{K} \gamma_k \psi_{\star}(n+1,k+1)e_{k}
 \left(1-\un_{\A_{m,k-1}} \right) \un_{\A_m} \un_{\lim_q \theta_q = \theta_\star} \eqsp.
\end{equation}
Then, by (\ref{eq:auxsurpsistar}), $\mu_n^{(2)}\un_{\A_m} \un_{\lim_q \theta_q
  = \theta_\star} = o_{w.p.1.}(1)$.  This concludes the proof of item
\textit{(i)}.

\textit{(ii)} Under C\ref{hyp:C3}\ref{hyp:C3:a}, (\ref{eq:definition:mu})
implies
\[
\PE \left[|\mu_{n+1}^{(1)}|^{2}
 \right] \leq \sum_{k=1}^{n+1}\gamma_{k}^{2}\PE\left[|\psi_{\star}(n+1,k+1)e_{k}
  \un_{\A_{m,k-1}}|^{2} \right] \eqsp.
\]
By C\ref{hyp:C1}\ref{hyp:C1:c}, C\ref{hyp:C3}\ref{hyp:C3:b}, C\ref{hyp:C2} and
Lemma~\ref{lem:Hurwitz:limit}, there exist positive constants $C, L'$ such that
\begin{align*}
  \PE\left[|\mu_{n+1}^{(1)}|^{2} \right]& \leq \sum_{k=1}^{n+1}\gamma_{k}^{2} \ 
  | \psi_{\star}(n+1,\ 
  k+1)|^{2} \ \PE\left[|e_{k}|^{2}\un_{\A_{m,k-1}}\right] \\
  & \leq C \sup_{k}\PE
  \left[|e_{k}|^{2}\un_{\A_{m,k-1}}\right]\sum_{k=1}^{n+1}\gamma_{k}^{2}\exp(-2L'\sum_{j=k+1}^{n+1}\gamma_{j})
  \eqsp.
\end{align*}
Therefore, by Lemma~\ref{lem:Hurwitz:aver} and C\ref{hyp:C2},
$\limsup_k\gamma_{k}^{-1}\PE[|\mu_{k}|^{2}]<+\infty$. Consider now
$\mu_{n+1}^{(2)}$. By C\ref{hyp:C2} and Lemma~\ref{lem:Hurwitz:limit}, $\lim_n
\gamma_n^{-1} |\psi_{\star}(n, \ell) |^2 \to 0$ for any fixed $\ell$.
Therefore, by (\ref{eq:munTronque}), $\gamma_n^{-1} |\mu_{n+1}^{(2)}|^{2}
\un_{\A_m} \un_{\lim_q \theta_q = \theta_\star}= o_{w.p.1}(1)$. This concludes
the proof of the second item.
\end{proof}

\subsection{Preliminary results on the sequence $\{\rho_n, n\geq 0\}$}
By (\ref{eq:definition:rho}) and (\ref{eq:Taylor:h}),
\begin{align*}
  \rho_{n+1}&=(\Id+\gamma_{n+1}\nabla
  h(\theta_{\star}))\rho_{n}+ \gamma_{n+1}r_{n+1}+\gamma_{n+1}(\theta_{n}-\theta_{\star})^{T}R^{(n)}_\bullet(\theta_{n}-\theta_{\star}) \\
  & = (\Id+\gamma_{n+1}\nabla h(\theta_{\star}))\rho_{n}  +\gamma_{n+1}r_{n+1}+\gamma_{n+1}(\mu_{n}+\rho_{n})^{T}R^{(n)}_\bullet(\mu_{n}+\rho_{n}) \\
  & = \left(\Id+\gamma_{n+1}\nabla
    h(\theta_{\star})+2\gamma_{n+1}\mu_{n}^{T}R^{(n)}_\bullet
    +\gamma_{n+1}\rho_{n}^{T}R^{(n)}_\bullet \right)\rho_{n} \\
  & \hspace{2cm}
  +\gamma_{n+1}r_{n+1}+\gamma_{n+1}\mu_{n}^{T}R^{(n)}_\bullet\mu_{n}\eqsp.
\end{align*}
By induction, this yields
\begin{equation}
 \label{eq:expression:rho}
 \rho_{n} = \psi(n,1)\rho_{0} + \sum_{k=1}^{n}\gamma_{k}\psi(n,
 k+1)\left(r_{k}+\mu_{k-1}^{T}R^{(k-1)}_\bullet \mu_{k-1} \right) \eqsp,
\end{equation}
where $\psi(n,k)$ is given by (\ref{eq:definition:psi}).

\bigskip 
\begin{prop}
  \label{prop:rho:rate}
  Assume C\ref{hyp:C1}, C\ref{hyp:C3}\ref{hyp:C3:a}-\ref{hyp:C3:b} and
  C\ref{hyp:C2}. Let $\theta_0 \in \Theta$.  Then, for any $m \geq 1$,
  \[
 \left\{ \rho_{n} - \sum_{k=1}^{n}
    \gamma_k \psi(n,k+1) r_k \right\} \un_{\lim_{q}\theta_{q}=\theta_{\star}}
  \un_{\A_m}= \gamma_{n}^{1 \wedge (1/2+\kappa)} \ O_{w.p.1}(1)O_{L^1}(1) \eqsp,
\]
with $ \kappa = 1/2$ under C\ref{hyp:C2}\ref{hyp:C2:a:1} and $\kappa \in (0, L
\gamma_\star -1/2)$ under C\ref{hyp:C2}\ref{hyp:C2:a:2}.

Assume in addition C\ref{hyp:C4}.  Then, for any $m \geq 1$,
\[
\sum_{k=1}^{n} \gamma_k \psi(n,k+1) r_k
\un_{\lim_{q}\theta_{q}=\theta_{\star}} \un_{\A_m}=\gamma_{n}^{1/2} \ O_{w.p.1}(1)o_{L^1}(1) \eqsp.
\]
\end{prop}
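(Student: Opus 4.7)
The plan is to start from the explicit expression (\ref{eq:expression:rho}) and split $\rho_n$ into three contributions: the transient term $\psi(n,1)\rho_0$, the quadratic term $Q_n \eqdef \sum_{k=1}^n \gamma_k \psi(n,k+1) \mu_{k-1}^T R_\bullet^{(k-1)} \mu_{k-1}$, and the remainder term $\sum_{k=1}^n \gamma_k \psi(n,k+1) r_k$ that is isolated in the statement. The first assertion amounts to controlling $\psi(n,1)\rho_0 + Q_n$ on the event, and the second is a direct control of the $r_k$-sum under C\ref{hyp:C4}.

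To handle the matrices $\psi(n,k)$ defined in (\ref{eq:definition:psi}), the key observation is that on $\{\lim_q \theta_q = \theta_\star\} \cap \A_m$ both $\mu_{k-1}$ and $\rho_{k-1}$ tend to zero (using Proposition~\ref{prop:mu:ps-L2}(i) for $\mu$ and $\theta_n\to\theta_\star$ for $\rho$), and $R_\bullet^{(k-1)}$ is uniformly bounded by a random constant since $h \in C^2$ near $\theta_\star$. Therefore, for $k$ larger than a random index, the factor in $\psi(n,k)$ is arbitrarily close to $\Id + \gamma_j \nabla h(\theta_\star)$, so by Lemmas~\ref{lem:Hurwitz:limit} and~\ref{lem:Hurwitz:aver} applied with a rate $L'<L$ (under C\ref{hyp:C2}\ref{hyp:C2:a:1}) or $L'<L-1/(2\gamma_\star)$ (under C\ref{hyp:C2}\ref{hyp:C2:a:2}), one has $|\psi(n,k+1)| \leq C \exp(-L'\sum_{j=k+1}^n \gamma_j)$ and $\sum_{k=1}^n \gamma_k |\psi(n,k+1)| = O(1)$ on the event, with random constants. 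In particular $|\psi(n,1)\rho_0| = o(\gamma_n^p)$ for any $p\geq 0$, which absorbs that transient term.

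For the quadratic term, boundedness of $R_\bullet^{(k-1)}$ on the event together with Proposition~\ref{prop:mu:ps-L2}(ii) gives $|\mu_{k-1}|^2 \un_{\lim_q \theta_q=\theta_\star} \un_{\A_m} = \gamma_{k-1}\bigl(O_{L^1}(1)+o_{w.p.1}(1)\bigr)$, so taking $L^1$ norms I would obtain
\[
\PE\left[|Q_n|\un_{\lim_q\theta_q=\theta_\star}\un_{\A_m}\right] \leq C \sum_{k=1}^n \gamma_k^2 |\psi(n,k+1)|.
\]
The last sum is controlled by Lemma~\ref{lem:Hurwitz:aver}: under C\ref{hyp:C2}\ref{hyp:C2:a:1} it is $O(\gamma_n)=O(\gamma_n^{1/2+\kappa})$ with $\kappa=1/2$, and under C\ref{hyp:C2}\ref{hyp:C2:a:2} (i.e.\ $\gamma_n\sim\gamma_\star/n$ with $L\gamma_\star>1/2$) a classical computation with $|\psi_\star(n,k+1)|\sim(k/n)^{L\gamma_\star}$ produces $O(\gamma_n^{1\wedge (1/2+\kappa)})$ for any $\kappa<L\gamma_\star-1/2$. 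The $o_{w.p.1}(1)$ part of $\gamma_{k-1}^{-1}|\mu_{k-1}|^2$ is handled by a Cesaro-type argument applied to the weights $\gamma_k\gamma_{k-1}|\psi(n,k+1)|$.

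For the second assertion I would split $r_k=r_k^{(1)}+r_k^{(2)}$ as in C\ref{hyp:C4}. The $r_k^{(1)}$ piece is direct: writing $\gamma_k^{-1/2} r_k^{(1)}\un = U_k V_k$ with $U_k=O_{w.p.1}(1)$ and $\PE|V_k|\to 0$, a truncation of the a.s.\ bounded factor $U$ combined with $\sum \gamma_k^{3/2}|\psi(n,k+1)|=O(\gamma_n^{1/2})$ (again from Lemma~\ref{lem:Hurwitz:aver}) gives $\gamma_n^{1/2} O_{w.p.1}(1) o_{L^1}(1)$. The $r_k^{(2)}$ piece requires Abel summation: setting $S_n=\sum_{k=1}^n r_k^{(2)}$, one gets
\[
\sum_{k=1}^n \gamma_k \psi(n,k+1) r_k^{(2)} = \gamma_n S_n + \sum_{k=1}^{n-1}\bigl(\gamma_k\psi(n,k+1) - \gamma_{k+1}\psi(n,k+2)\bigr) S_k.
\]
The first term is $\gamma_n^{1/2}\cdot\sqrt{\gamma_n}\,S_n=\gamma_n^{1/2}O_{w.p.1}(1)o_{L^1}(1)$ directly from C\ref{hyp:C4}. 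The sum is rewritten using the recurrence $\psi(n,k+1)=(\Id+\gamma_{k+1}M_k)\psi(n,k+2)$ with $M_k$ bounded on the event, so the bracket equals $\psi(n,k+2)\bigl[(\gamma_k-\gamma_{k+1})\Id+\gamma_k\gamma_{k+1}M_k\bigr]$, and I would bound the resulting sum by the same types of weighted estimates as above, together with $|\gamma_k-\gamma_{k+1}|/\gamma_k = o(\gamma_k)$ implied by C\ref{hyp:C2}.

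The main obstacle I anticipate is exactly this Abel summation step for $r_k^{(2)}$: controlling the differences $\gamma_k\psi(n,k+1)-\gamma_{k+1}\psi(n,k+2)$ in conjunction with the $O_{w.p.1}(1)\,o_{L^1}(1)$ decomposition requires a careful truncation argument to exchange the a.s.\ bound and the expectation (truncate $\sup_k|U_k|\leq M$, absorb the complementary event into a small-probability residual, then let $M\to\infty$), and the bookkeeping with the indicator $\un_{\A_m}\un_{\lim_q\theta_q=\theta_\star}$ must be done at each step so that the random constants produced by Lemmas~\ref{lem:Hurwitz:limit}--\ref{lem:Hurwitz:aver} remain finite.
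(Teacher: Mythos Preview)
Your proposal is essentially the paper's own proof: the same decomposition from (\ref{eq:expression:rho}), the same use of Proposition~\ref{prop:mu:ps-L2} together with Lemmas~\ref{lem:Hurwitz:limit}--\ref{lem:Hurwitz:aver} to bound $\psi(n,k)$ and the weighted sums, and the same Abel summation for the $r_k^{(2)}$ contribution. Two small slips to fix: under C\ref{hyp:C2}\ref{hyp:C2:a:2} the constraint on the rate is $L'>1/(2\gamma_\star)$ (equivalently the paper's $(L-\eta)\gamma_\star>1/2+\kappa$), not $L'<L-1/(2\gamma_\star)$, and consequently $|\psi(n,1)\rho_0|$ is only $O(\gamma_n^{L'\gamma_\star})$, not $o(\gamma_n^p)$ for \emph{every} $p$---but this is already enough for the stated bound.
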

\begin{proof} 
  The proof in given under C\ref{hyp:C2}\ref{hyp:C2:a:2}. The case
  C\ref{hyp:C2}\ref{hyp:C2:a:1} - which is simpler - is on the same lines and
  is omitted. Let $m \geq 1$ be fixed.
  
  \textit{(i)} Let $\eta >0$ and $\kappa \in (0, L \gamma_\star - 1/2)$ such
  that
\begin{equation}
  \label{eq:condition:kappa}
  (L-\eta)\gamma_{\star}>  1/2+\kappa \eqsp.
\end{equation}
Note that such $(\eta,\kappa)$ exist under C\ref{hyp:C2}\ref{hyp:C2:a:2}. This
implies that
\begin{equation}
  \label{eq:rho:rate:tool2}
\lim_{n}\sup\gamma_{n}^{-(1/2+\kappa)}\exp(-(L-\eta)\sum_{j=1}^{n}\gamma_{j})<+\infty
\eqsp.
\end{equation}

We now prove by application of Lemma~\ref{lem:Hurwitz:limit} that there exists
an almost-surely finite positive r.v. $U_{\eta}$ such that for any $1\leq k\leq
n$,
\begin{equation}
  \label{eq:rho:rate:tool1}
  | \psi(n, k)| \un_{\lim_{q}\theta_{q}=\theta_{\star}}\un_{\A_m}\leq
U_{\eta}\exp(-(L-\eta)\sum_{j=k}^{n}\gamma_{j}) \eqsp.
\end{equation}
To that goal, let us prove w.p.1.  $\lim_n (
\rho_{n}^{T}R^{(n)}_\bullet+2\mu_{n}^{T}R^{(n)}_\bullet)\un_{\lim_{q}\theta_{q}=\theta_{\star}}\un_{\A_m}=0$.
By Proposition~\ref{prop:mu:ps-L2}, $
\lim_{n}\mu_{n}\un_{\A_m}\un_{\lim_{q}\theta_{q}=\theta_{\star}}=0$ w.p.l. and
this implies that w.p.1.,
\[
\lim_{n}\rho_{n}\un_{\lim_{q}\theta_{q}=\theta_{\star}}\un_{\A_m}=\lim_{n}(\theta_{n}-\theta_{\star}-\mu_{n})\un_{\lim_{q}\theta_{q}=\theta_{\star}}\un_{\A_m}=0
\eqsp.
\]
In addition, under C\ref{hyp:C1}\ref{hyp:C1:b}, $R^{(n)}_\bullet
\un_{\lim_{q}\theta_{q}=\theta_{\star}} = O_{w.p.1.}(1)$. This concludes the
proof of (\ref{eq:rho:rate:tool1}). Set $\kappa' \eqdef 1 \wedge (1/2+\kappa)$.
By (\ref{eq:expression:rho}), we have
\begin{multline}
\label{eq:rho:rate:tool5}
\rho_{n} - \sum_{k=1}^{n}\gamma_{k}\psi(n, k+1)r_{k} = \psi(n,1)\rho_{0} +
\sum_{k=1}^{n}\gamma_{k}\psi(n, k+1)\left(\mu_{k-1}^{T}R^{(k-1)}_\bullet
  \mu_{k-1} \right) \eqsp.
\end{multline}
Consider the first term.  By (\ref{eq:rho:rate:tool1}), 
\[
\gamma_{n}^{-(1/2+\kappa)}| \psi(n,\ 1)| \ 
|\rho_{0}|\un_{\lim_{q}\theta_{q}=\theta_{\star}}\un_{\A_m}\leq\gamma_{n}^{-(1/2+\kappa)}U_{\eta}\exp(-(L-\eta)\sum_{j=1}^{n}\gamma_{j})|\rho_{0}|,
\]
and by (\ref{eq:rho:rate:tool2}), this term is $O_{w.p.1}(1)$.  For the second
term, it holds by (\ref{eq:rho:rate:tool1})
\begin{multline*}
  \gamma_{n}^{-\kappa'} |\sum_{k=1}^{n}\gamma_{k}\psi(n,
  k+1)\mu_{k-1}^{T}R^{(k-1)}_\bullet\mu_{k-1}|\un_{\lim_{q}\theta_{q}=\theta_{\star}} \un_{\A_m} \\
  \leq\gamma_{n}^{-\kappa' } \sum_{k=1}^{n}\gamma_{k}^{2} \ | \psi(n, k+1)|
  \ (\gamma_{k}^{-1/2}|\mu_{k-1}|)^{2} | R^{(k-1)}_\bullet |
  \un_{\lim_{q}\theta_{q}=\theta_{\star}} \un_{\A_m} \\
  \leq O_{w.p.1}(1) \ 
  \gamma_{n}^{-\kappa'}\sum_{k=1}^{n}\gamma_{k}^{1 + \kappa'}\exp(-(L-\eta)\sum_{j=k+1}^{n}\gamma_{j})(\gamma_{k}^{-1/2}|\mu_{k-1}|)^{2}
  | R^{(k-1)}_\bullet | \un_{\lim_{q}\theta_{q}=\theta_{\star}} \un_{\A_m}\eqsp.
\end{multline*}
By Proposition~\ref{prop:mu:ps-L2}, $ \gamma_{n}^{-1}|\mu_{n}|^2
\un_{\A_m}\un_{\lim_{q}\theta_{q}=\theta_{\star}}= O_{L^1}(1) + o_{w.p.1}(1)$
and under C\ref{hyp:C1}\ref{hyp:C1:b}, $| R^{(k)}_\bullet |
\un_{\lim_{q}\theta_{q}=\theta_{\star}} = O_{w.p.1}(1)$.  By
Lemma~\ref{lem:Hurwitz:aver} and (\ref{eq:condition:kappa}), this term is
$O_{w.p.1}(1) O_{L^1}(1)$. 

\textit{(ii)} Set $r_ k = r_k^{(1)} + r_k^{(2)}$ as in C\ref{hyp:C4}. It holds,
\begin{multline*}
  \gamma_{n}^{- 1/2}|\sum_{k=1}^{n}\gamma_{k}\psi(n,\ 
  k+1)r_{k}^{(1)}|\un_{\lim_{q}\theta_{q}=\theta_{\star}}\un_{\A_m} \\
  \leq\gamma_{n}^{-1/2}\sum_{k=1}^{n}\gamma_{k}^{3/2} \ |\psi(n,\ k+1)| \ 
  |\gamma_{k}^{-1/2}r_{k}^{(1)}|\un_{\lim_{q}\theta_{q}=\theta_{\star}} \un_{\A_m}
\end{multline*}
and by (\ref{eq:condition:kappa}), (\ref{eq:rho:rate:tool1}), C\ref{hyp:C4},
C\ref{hyp:C2} and Lemma~\ref{lem:Hurwitz:aver}, this term is
$O_{w.p.1}(1)o_{L^1}(1)$. For the second term, we use the Abel lemma: set
$\Xi_n \eqdef \sum_{k=1}^n r_k^{(2)} \un_{\A_m} \un_{\lim_q \theta_q
  =\theta_\star}$. Then
\begin{multline*}
\un_{\A_m} \un_{\lim_q \theta_q
  =\theta_\star}  \gamma_{n}^{- 1/2}\sum_{k=1}^{n}\gamma_{k}\psi(n,\ 
  k+1)r_{k}^{(2)} \\
  = \sqrt{\gamma_n} \Xi_n + \gamma_{n}^{- 1/2}\sum_{k=1}^{n-1} \gamma_k
  \gamma_{k+1} \psi(n, k+2) \left\{ (\frac{1}{\gamma_{k+1}}-
    \frac{1}{\gamma_k}) I + H_{k} \right\} \Xi_k
\end{multline*}
where $H_{k} = \nabla h(\theta_\star) + 2 \mu_k^T R_\bullet^{(k)} + \rho_k^T
R_\bullet^{(k)}$. Following the same lines as above, along the event $\{\lim_q
\theta_q = \theta_\star\} \cap \A_m$, $\sup_k |H_k|$ is finite w.p.1. Hence, o
\begin{multline*}
 \un_{\A_m} \un_{\lim_q \theta_q
  =\theta_\star} \gamma_{n}^{- 1/2} \left|\sum_{k=1}^{n}\gamma_{k}\psi(n,\ k+1)r_{k}^{(2)}
  \right| \leq O_{w.p.1}(1) \ \left\{ \sqrt{\gamma_n} |\Xi_n|  \right. \\
  \left. + \gamma_{n}^{- 1/2}\sum_{k=1}^{n-1} \sqrt{\gamma_k} \gamma_{k+1}
    |\psi(n, k+2)| \sqrt{\gamma_k}| \Xi_k| \right\}
\end{multline*}
and the rhs is $O_{w.p.1}(1) o_{L^1}(1)$ by Lemma~\ref{lem:Hurwitz:aver} and
C\ref{hyp:C4}.
\end{proof}

\subsection{Proof of Theorem~\ref{theo:clt}}
\label{sec:proof:theo:clt}
By (\ref{eq:definition:rho}), $\gamma_n^{-1/2} \left( \theta_n - \theta_\star
\right) = \gamma_n^{-1/2} \mu_n + \gamma_n^{-1/2} \rho_n$.  We first prove that on $\{\lim_q \theta_q = \theta_\star \}$,
the second term tends to zero in probability. By C\ref{hyp:C3}\ref{hyp:C3:b},
for any $\epsilon >0$ there exists $m \geq 1$ such that $\PP(\A_m \vert \lim_q
\theta_q = \theta_\star) \geq 1 - \epsilon$.  Therefore, it is sufficient to
prove that for any $m \geq 1$, $\gamma_n^{-1/2} \rho_n \un_{\A_m} \un_{\lim_q
  \theta_q = \theta_\star} \plim 0$ when $n \to \infty$. This property holds by
Proposition~\ref{prop:rho:rate}.

\bigskip

We now prove a CLT for the sequence $\{\gamma_n^{-1/2} \mu_n, n\geq 0\}$.  It
is readily seen that
\[
\lim_n \PE\left[\exp(i \gamma_n^{-1/2} t^T\mu_n) \un_{\lim_q \theta_q =
    \theta_\star} \right] = \PE\left[\exp(-0.5 t^T V t) \un_{\lim_q \theta_q
    =\theta_\star} \right]
\]
if and only if
\[
\lim_n \PE\left[\exp(i  \gamma_n^{-1/2} t^T \mu_n\un_{\lim_q \theta_q =
    \theta_\star} ) \right] = \PE\left[\exp(-0.5 t^T V t\un_{\lim_q \theta_q =
    \theta_\star} )  \right]
\]
Furthermore, by C\ref{hyp:C2} and Lemma~\ref{lem:Hurwitz:limit}, for any fixed
$\ell \geq 1$, $\lim_n \gamma_n^{-1/2} |\psi_\star(n,\ell)| =0$ (where
$\psi_\star$ is given by (\ref{eq:definition:psistar})); this property,
together with (\ref{eq:expression:mu}) and (\ref{eq:definition:Ansequence})
imply that
\[
\lim_n \PE\left[\exp(i \gamma_n^{-1/2} t^T\mu_n\un_{\lim_q \theta_q =
    \theta_\star} ) \right] = \lim_n \PE\left[ \exp\left( i t^T \sum_{k=1}^n
    X_{n+1,k}  \un_{A_{k-1}}\right) \right]
\]
where $ X_{n+1,k} = \gamma_{n+1}^{-1/2}\gamma_{k}\psi_{\star}(n+1,\ k+1)e_{k}$.
By C\ref{hyp:C3}\ref{hyp:C3:a} and (\ref{eq:definition:Ansequence}),
$\PE\left[X_{n+1,k} \un_{A_{k-1}} \vert \F_{k-1} \right]=0$ and the limit in
distribution is obtained by standard results on CLT for martingale-arrays (see
e.g.  Hall and Heyde~\cite[Corollary 3.1.]{hall:heyde:1980}).

\paragraph{ Lindeberg condition} we have to prove that for any $\epsilon>0$, 
\[
\sum_{k=1}^n \PE\left[ |X_{n+1,k}|^2 \un_{|X_{n+1,k} | \geq \epsilon} \ \vert
  \F_{k-1} \right]  \un_{A_{k-1}}\plim 0 \eqsp.
\] 
Following the same lines as above, it can be proved that equivalently, we have
to prove  for any $m \geq 1$,
\[
\un_{\A_m}\un_{\lim_q \theta_q = \theta_\star}  \sum_{k=1}^n \PE\left[ |X_{n+1,k}|^2 \un_{|X_{n+1,k} | \geq \epsilon} \ \vert
\F_{k-1} \right] \plim 0 \eqsp.
\] 
Let $m \geq 1$ be fixed and set $X_{n+1,k} = X_{n+1,k}^{(1)} +
X_{n+1,k}^{(2)}$ with
\[
X_{n+1,k}^{(1)} = X_{n+1,k} \un_{\A_{m,k-1}} \eqsp, \qquad X_{n+1,k}^{(2)} =
X_{n+1,k} \left(1 - \un_{\A_{m,k-1}} \right) \eqsp.
\]
We can assume without loss of generality that $\tau$ given by
C\ref{hyp:C3}\ref{hyp:C3:b} is small enough so that
$(2+\tau)L\gamma_{\star}>1+\tau$. Then,
\begin{multline*}
  \sum_{k=1}^{n+1} \PE\left[|X_{n+1,k}^{(1)}|^{2 +\tau} \right]= \sum_{k=1}^{n+1}\PE
  \left[|\gamma_{n+1}^{-1/2}\gamma_{k}\psi_{\star}(n+1,k+1)
    e_{k} \un_{\A_{m,k-1}}|^{2+\tau} \right] \\
  \leq\sup_{k}\PE\left[|e_k
    \un_{\A_{m,k-1}}|^{2+\tau}\right] \ 
  \gamma_{n+1}^{-1-\tau/2}\sum_{k=1}^{n+1}\gamma_{k}^{2+\tau}|\psi_{\star}(n+1,
  k+1)|^{2+\tau} \eqsp.
\end{multline*}
Under C\ref{hyp:C1}\ref{hyp:C1:b}-\ref{hyp:C1:c}, 
C\ref{hyp:C3}\ref{hyp:C3:b} and C\ref{hyp:C2}, Lemmas~\ref{lem:Hurwitz:limit} and
\ref{lem:Hurwitz:aver} imply
\[
\limsup_{n}\gamma_{n+1}^{-(1+\tau)}\sum_{k=1}^{n+1}\gamma_{k}^{2+\tau}|\psi_{\star}(n+
1,k+1)|^{2+\tau}<+\infty 
\]  since $(2+\tau)L\gamma_{\star}>1+\tau$,
Lemma~\ref{lem:Hurwitz:aver} applies even in the case
C\ref{hyp:C2}\ref{hyp:C2:a:2}). Hence,
\[
 \sum_{k=1}^{n+1} \PE\left[|X_{n+1,k}^{(1)}|^{2 +\tau} \right]= o(\gamma_n^{\tau/2}) \eqsp.
\]
Consider now $X_{n+1,k}^{(2)}$. Since there exists a random variable $K$ such
that $\un_{\A_m} (1-\un_{\A_{m,k-1}}) \un_{\lim_q \theta_q = \theta_\star}=0$
for any $k \geq K$, it holds for any $n \geq K$,
\begin{align*}
& \un_{\lim_q \theta_q = \theta_\star}  \un_{\A_m} \sum_{k=1}^n \PE\left[ |X_{n+1,k}^{(2)}|^2 \un_{|X_{n+1,k} | \geq
      \epsilon}\ \vert
    \F_{k-1} \right]  \\
  & = \un_{\lim_q \theta_q = \theta_\star}\un_{\A_m} \sum_{k=1}^K \PE\left[ |X_{n+1,k}|^2 \un_{|X_{n+1,k} |
      \geq \epsilon} \ \vert
    \F_{k-1} \right] \left( 1- \un_{\A_{m,k-1}} \right) \\
  & \leq \un_{\lim_q \theta_q = \theta_\star}\un_{\A_m} \gamma_n^{-1} \sum_{k=1}^K \gamma_k^2
  |\psi_{\star}(n+1,k+1) |^2\PE\left[ |e_k|^2 \ \vert \F_{k-1} \right] \left(
    1- \un_{\A_{m,k-1}} \right) \eqsp.
\end{align*}
Under C\ref{hyp:C2}, this term is $o_{w.p.1}(1)$.  Therefore, the first
condition of \cite[Corollary 3.1.]{hall:heyde:1980} is satisfied.
\paragraph{ Limiting variance} We prove the second condition of \cite[Corollary
3.1.]{hall:heyde:1980}. Set
\begin{align*}
  V_{n}^{(1)} & \eqdef
  \gamma_{n}^{-1}\sum_{k=1}^{n}\gamma_{k}^{2}\psi_{\star}(n,\ 
  k+1)U_{\star}\psi_{\star}(n,\ k+1)^{T}  \un_{\lim_q \theta_q = \theta_\star}\eqsp,   \\
  \overline{V}_{n}^{(2)} & \eqdef
  \gamma_{n}^{-1}\sum_{k=1}^{n}\gamma_{k}^{2}\psi_{\star}(n,\ 
  k+1) \cdots \\
  & \times  \left(\PE[e_{k}e_{k}^{T}|\mathcal{F}_{k-1}] \un_{A_{k-1}} -U_{\star}
    \un_{\lim_q \theta_q = \theta_\star} \right) \ \psi_{\star}(n,\ k+1)^{T}
  \eqsp;
\end{align*}
We prove that $V_{n}^{(1)} \plim V \un_{\lim_q \theta_q = \theta_\star}$ and
$\overline{V}_{n}^{(2)} \plim 0$.  It holds on $\{\lim_q \theta_q =
\theta_\star \}$,
\begin{align*}
  V_{n+1}^{(1)} & = \gamma_{n+1}U_{\star}+\frac{\gamma_{n}}{\gamma_{n+1}}
  \left(\Id+\gamma_{n+1}\nabla h(\theta_{\star})\right) \ V_{n}^{(1)} \ 
  \left(\Id+\gamma_{n+1}\nabla
    h(\theta_{\star})\right)^{T} \\
  & =V_{n}^{(1)}+\gamma_{n}(U_{\star}+\nabla
  h(\theta_{\star})V_{n}^{(1)}+V_{n}^{(1)}\nabla
  h(\theta_{\star})^{T})+\frac{\gamma_{n}-\gamma_{n+1}}{\gamma_{n+1}}V_{n}^{(1)} \\
  & \hspace{2cm}
  +(\gamma_{n+1}-\gamma_{n})U_{\star}+\gamma_{n}\gamma_{n+1}\nabla
  h(\theta_{\star})V_{n}^{(1)}\nabla h(\theta_{\star})^{T}
\end{align*}
and by Lemma~\ref{lem:LyapunovLimit}, $\lim_n V_n^{(1)} =V \un_{\lim_q \theta_q
  = \theta_\star}$ almost-surely. Following the same lines as above, it can be
proved that $\overline{V}_n^{(2)}$ and $V_n^{(2)}$ given by
\[
V_{n}^{(2)} =\un_{\lim_q \theta_q = \theta_\star} \ 
\gamma_{n}^{-1}\sum_{k=1}^{n}\gamma_{k}^{2}\psi_{\star}(n,\ k+1)
\left(\PE[e_{k}e_{k}^{T}|\mathcal{F}_{k-1}] -U_{\star} \right) \ 
\psi_{\star}(n,\ k+1)^{T}
\]
have the same limit in probability.  By C\ref{hyp:C3}\ref{hyp:C3:c}, we write
$V_{n}^{(2)} = \left(V_{n}^{(2,a)} + V_{n}^{(2,b)}\right)\un_{\lim_q \theta_q =
  \theta_\star}$ with
\begin{align*}
  \hspace{-1cm} V_{n}^{(2,a)} & =
  \gamma_{n}^{-1}\sum_{k=1}^{n}\gamma_{k}^{2}\psi_{\star}(n,\ k+1)
  D_{k-1}^{(1)} \psi_{\star}(n,\ k+1)^{T} \\
  V_{n}^{(2,b)} &=
  \gamma_{n}^{-1}\sum_{k=1}^{n}\gamma_{k}^{2}\psi_{\star}(n,\ k+1)
  D_{k-1}^{(2)} \psi_{\star}(n,\ k+1)^{T} \eqsp.
\end{align*}
We have $ \left|V_{n}^{(2,a)} \right|\leq
\gamma_{n}^{-1}\sum_{k=1}^{n}\gamma_{k}^{2} \left|\psi_{\star}(n,\ k+1)
\right|^2 \ |D_{k-1}^{(1)}|$.  By Lemma~\ref{lem:Hurwitz:aver}, there exists a
constant $C$ such that on $\{\lim_q \theta_q = \theta_\star \}$
\[
\limsup_n \left|V_{n}^{(2,a)} \right| \leq C \ \ \limsup_k \left|
  D_{k}^{(1)}\right| \eqsp,\] where we used (\ref{eq:definition:Ansequence}).
The rhs tends to zero w.p.1.  by C\ref{hyp:C3}\ref{hyp:C3:c}. We now consider
$V_{n}^{(2,b)}$.  Since $\lim_m \PP(\A_m \vert \lim_q \theta_q =
\theta_\star) =1$, it is sufficient to prove that for any $m \geq 1$,
$V_{n}^{(2,b)} \un_{\lim_q \theta_q = \theta_\star} \un_{\A_m} \plim 0$ when
$n \to \infty$. Let $m \geq 1$. Set
\[
\Xi_n \eqdef \sum_{j=0}^n D_j^{(2)} \un_{\lim_q \theta_q = \theta_\star}
\un_{\A_m}\eqsp.
\]
By the Abel transform, we have
\begin{multline*}
  V_{n+1}^{(2,b)} \un_{\A_m} \un_{\lim_q \theta_q = \theta_\star} =
  \gamma_{n+1} \Xi_n + \gamma_{n+1}^{-1} \sum_{k=0}^{n-1} \{
  \gamma_{k+1}^2 \psi_\star(n+1,k+2) \Xi_k \psi_\star(n+1,k+2)^T  \\
  - \gamma_{k+2}^2 \psi_\star(n+1,k+3) \Xi_k \psi_\star(n+1,k+3)^T \}
\end{multline*}
Under C\ref{hyp:C3}\ref{hyp:C3:c}, $ \gamma_n \Xi_n \plim 0$. For the second
term, following the same lines as in Delyon~\cite[Proof of Theorem 24, Chapter
4]{delyon:2000}, it can be proved that the expectation of the second term is
upper bounded by
\[
C \ \gamma_{n+1}^{-1} \sum_{k=0}^{n-1} \gamma_{k+1}^2 \ \left|
  \psi_\star(n+1,k+2)\right|^2 \ \left( \gamma_k \PE\left[| \Xi_k | \right]
\right) \eqsp.
\]
Since $\lim_k \gamma_k \PE\left[| \Xi_k | \right] =0$,
Lemma~\ref{lem:Hurwitz:aver} implies that $V_{n}^{(2,b)}\un_{\A_m} \un_{\lim_q \theta_q = \theta_\star} \plim 0$.
This concludes the proof.

\begin{remark} \label{rem:relaxedD2}
  From the proof above, it can be seen that the assumption on the r.v.
  $D_n^{(2)}$ can be relaxed in
\[
\lim_n \gamma_n \PE\left[ \right| \sum_{k=1}^n D_k^{(2)} \un_{A_k} \un_{\A_{m,k}}
\left| \right] = 0 \eqsp.
\]
Observe indeed that in probability,
\[
\lim_n V_n^{(2,b)} \un_{\A_m} \un_{\lim_q \theta_q = \theta_\star} = \lim_n
\gamma_{n}^{-1}\sum_{k=1}^{n}\gamma_{k}^{2}\psi_{\star}(n,\ k+1) D_{k-1}^{(2)}
\psi_{\star}(n,\ k+1)^{T} \un_{\A_{m,k-1}} \un_{A_{k-1}} \eqsp.
\]
\end{remark}

\subsection{Proof of Proposition~\ref{prop:L2norm:theta}}
The proof is prefaced with a preliminary lemma.
\begin{lemma}
  \label{lem:averaging:2}
  Let $\{\gamma_n, n\geq 1\}$ is a (deterministic) positive sequence satisfying
  C\ref{hyp:C2}\ref{hyp:C2:a:1} and $A$ be a (deterministic) $d \times d$
  Hurwitz matrix. Let $\{x_n, n\geq 0\}$ be a sequence of $\Rset^d$-valued r.v.
  satisfying
\[
x_{n+1} = x_n + \gamma_{n+1} A x_n + \gamma_{n+1} \zeta_{n+1}^{(1)} +
\gamma_{n+1} \zeta_{n+1}^{(2)} \eqsp, \qquad n\geq 0 \eqsp,
\]
where
\begin{align*}
  & \sum_{k=1}^n \gamma_k \left( \prod_{j=k+1}^{n+1}
    \left(\Id + \gamma_{j} A \right) \right) \ \zeta_k^{(1)}\un_{\lim_q x_q =0} = \sqrt{\gamma_n} O_{w.p.1}(1) O_{L^2}(1) \eqsp,  \\
  & |\zeta_{n}^{(2)}| \un_{\lim_q x_q =0} = |x_n|^2 \ O_{w.p.1.}(1) \eqsp.
\end{align*}
Then
\[
\gamma_n^{-1} \ |x_n|^2 \un_{\lim_q x_q =0} = O_{w.p.1.}(1) O_{L^1}(1) \eqsp.
\]
\end{lemma}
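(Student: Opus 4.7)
The plan is to iterate the recursion explicitly and then use the Hurwitz decay of $\Phi(n,k):=\prod_{j=k}^n(\Id+\gamma_j A)$ to decouple the contributions of $\zeta^{(1)}$ and $\zeta^{(2)}$. Solving the linear part gives
\[
x_n = \Phi(n,1)\,x_0 + M_n + R_n, \qquad M_n := \sum_{k=1}^n \gamma_k \Phi(n,k+1)\zeta_k^{(1)},\quad R_n := \sum_{k=1}^n \gamma_k \Phi(n,k+1)\zeta_k^{(2)}.
\]
By Lemma~\ref{lem:Hurwitz:limit}, $\gamma_n^{-1}|\Phi(n,1)\,x_0|^2 \to 0$ deterministically under C\ref{hyp:C2}, while the hypothesis on $\zeta^{(1)}$ yields $\gamma_n^{-1}|M_n|^2 \un_{\lim_q x_q = 0} = O_{w.p.1}(1)\,O_{L^1}(1)$ by squaring the $O_{w.p.1}(1)\,O_{L^2}(1)$ bound. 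The remaining work is to absorb the nonlinear term $R_n$.

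For $R_n$ the plan is a bootstrap / discrete Gr\"onwall argument. Let $C(\omega)<\infty$ a.s.\ satisfy $|\zeta_k^{(2)}|\un_{\lim_q x_q=0}\leq C(\omega)|x_k|^2$, and let $K := \sup_n \gamma_n^{-1/2}\sum_{k=1}^n \gamma_k^{3/2}|\Phi(n,k+1)| < \infty$ by Lemma~\ref{lem:Hurwitz:aver}. Since $x_k\to 0$ on $\{\lim_q x_q = 0\}$, for a.e.\ such $\omega$ I can choose a random index $k_0(\omega)$ and threshold $\epsilon(\omega)>0$ so that $|x_k|\leq\epsilon(\omega)$ for $k\geq k_0$ and $2K\,C(\omega)\,\epsilon(\omega)\leq 1$. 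For these $k$ the bound $|\zeta_k^{(2)}|\leq C(\omega)\epsilon(\omega)|x_k|$ is linear in $|x_k|$, so restarting the iteration at $k_0$ and setting $w_n := \gamma_n^{-1/2}|x_n|\un_{\lim_q x_q=0}$ yields, for $n\geq k_0$,
\[
w_n \leq a_n + \frac{1}{2K}\gamma_n^{-1/2}\sum_{k=k_0+1}^n \gamma_k^{3/2}|\Phi(n,k+1)|\,w_k,
\]
where $a_n$ gathers $\gamma_n^{-1/2}|\Phi(n,k_0+1)x_{k_0}|$ (exponentially small) and $\gamma_n^{-1/2}|M_n^{(k_0)}|$ (handled as in the first paragraph). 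Taking $W_n := \sup_{k_0\leq k\leq n}w_k$ and applying the definition of $K$ gives $W_n \leq \sup_{k_0\leq k\leq n}a_k + \tfrac12 W_n$, hence $W_n \leq 2\sup_{k_0\leq k\leq n}a_k$.

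Squaring produces $\gamma_n^{-1}|x_n|^2\un_{\lim_q x_q=0}\leq 4\sup_k a_k^2$, and $a_k^2$ factors as a w.p.1\ bounded piece (the exponential decay together with the random constants $C(\omega),\epsilon(\omega)^{-1},k_0(\omega)$) times an $L^1$-bounded piece coming from $|M_k^{(k_0)}|^2/\gamma_k$; this matches the required $O_{w.p.1}(1)\,O_{L^1}(1)$ form. As a consistency check, once $w_n = O_{w.p.1}(1)$ is in hand one recovers $R_n$ directly: $|\zeta_k^{(2)}|\leq C(\omega)\gamma_k w_k^2$, so $|R_n|\leq C(\omega)(\sup_k w_k^2)\sum_k\gamma_k^2|\Phi(n,k+1)| = O_{w.p.1}(\gamma_n)$ by Lemma~\ref{lem:Hurwitz:aver}, whence $\gamma_n^{-1}|R_n|^2 = o_{w.p.1}(1)$. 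The main obstacle will be the Gr\"onwall step: the self-referential nature of the $\zeta^{(2)}$ bound means the nonlinear coupling has to be tamed by choosing $\epsilon(\omega)$ small, which forces the constants to be random and the estimate to be carried out $\omega$-by-$\omega$, and some care is then needed so that the w.p.1\ and $L^2$ components of the $\zeta^{(1)}$ bound propagate cleanly through the squaring step to produce the final $O_{w.p.1}(1)\,O_{L^1}(1)$ product.
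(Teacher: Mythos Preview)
Your decomposition $x_n=\Phi(n,1)x_0+M_n+R_n$ is exactly the paper's splitting $x_n\un_{\lim_q x_q=0}=y_n+z_n$ (with $y_n=M_n$ and $z_n$ absorbing the initial term and $R_n$), and the idea of linearising the $\zeta^{(2)}$ contribution once $x_k$ is small is also the same. The gap is in how you close the loop. Your Gr\"onwall inequality yields $w_n\leq 2\sup_{k_0<m\leq n}a_m$, and the dominant piece of $a_m$ is $\gamma_m^{-1/2}|M_m|$, which by hypothesis is only $O_{w.p.1}(1)\,O_{L^2}(1)$. A supremum over $m$ does not preserve either the $O_{w.p.1}$ or the $O_{L^2}$ structure: an $O_{L^2}(1)$ sequence need not be a.s.\ bounded, and $\sup_{m\leq n}|Q_m|^2$ need not be $O_{L^1}(1)$ when only $\sup_m\PE[Q_m^2]<\infty$ is known. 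So neither ``$w_n=O_{w.p.1}(1)$'' nor the squared bound ``$w_n^2\leq 4\sup_m a_m^2=O_{w.p.1}(1)O_{L^1}(1)$'' is justified. Your consistency check at the end presupposes the first of these, so it does not rescue the argument.

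The paper avoids this by never taking a supremum. It works with the \emph{one-step} recursion for $z_n$ (your $R_n$), namely $z_{n+1}=(\Id+\gamma_{n+1}A)z_n+\gamma_{n+1}\zeta_{n+1}^{(2)}\un$, first shows $z_n=o_{w.p.1}(1)$, and then uses $|\zeta_{n+1}^{(2)}|\un\leq C(\omega)(|y_n|^2+|z_n|^2)$ to obtain a scalar inequality $|z_{n+1}|\leq e^{-(L'-\delta)\gamma_{n+1}}|z_n|+O_{w.p.1}(1)\gamma_{n+1}|y_n|^2$ for $n$ past a random index. Iterating this gives $|z_n|$ bounded by a \emph{weighted sum} $\sum_k\gamma_k e^{-(L'-\delta)\sum_{j>k}\gamma_j}|y_{k-1}|^2$ rather than a supremum; since expectation commutes with the sum, Lemma~\ref{lem:Hurwitz:aver} and $|y_k|^2=\gamma_k\,O_{w.p.1}(1)O_{L^1}(1)$ immediately give $z_n=\gamma_n\,O_{w.p.1}(1)O_{L^1}(1)$. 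The conclusion then follows from $(y+z)^2\leq y^2+2|x||z|$ and $|x_n|\un=O_{w.p.1}(1)$. If you replace your global Gr\"onwall step by this one-step iteration of the recursion for $R_n$, the rest of your outline goes through.
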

\begin{proof}
  The proof is adapted from Delyon~\cite[Theorems 20 and 23]{delyon:2000}.  For $n
  \geq 0$, set $x_n \un_{\lim_q x_q =0} = y_n + z_n$ where
  \begin{equation}
    \label{eq:averaging:2:tool1}
    y_{n+1} = \left(\Id + \gamma_{n+1} A \right) y_n +
\gamma_{n+1}\zeta_{n+1}^{(1)}\un_{\lim_q x_q =0} \eqsp, \qquad n \geq 0 \eqsp,
  \end{equation}  
  and $y_0=0$.  The first step of the proof is to show
  \begin{equation}
    \label{eq:eq:averaging:2:tool2}
y_n = \sqrt{\gamma_n} O_{w.p.1}(1) O_{L^2}(1) \eqsp, \qquad z_n = \gamma_n O_{w.p.1}(1) O_{L^1}(1) \eqsp.
  \end{equation}
Then, upon noting that $(y+z)^2 \leq y^2 + 2 (y+z) z$, we write
\[
|x_n|^2 \un_{\lim_q x_q =0} \leq |y_n|^2 + 2 |x_n| |z_n| \un_{\lim_q x_q =0}
\leq \gamma_n O_{L^1}(1) + 2 \gamma_n O_{w.p.1}(1) \ O_{L^1}(1)
\]
since $ |x_n| \un_{\lim_q x_q =0} = O_{w.p.1.}(1)$.  This concludes the proof
of the Lemma. We turn to the proof of (\ref{eq:eq:averaging:2:tool2}).  By
iterating (\ref{eq:averaging:2:tool1}), we have
\[
y_{n+1} = \sum_{k=1}^{n+1} \gamma_k \left\{ \prod_{j=k+1}^{n+1} \left(\Id +
    \gamma_{j} A \right) \right\} \ \zeta_k^{(1)} \un_{\lim_q x_q =0} \eqsp.
\]
Lemmas~\ref{lem:Hurwitz:limit} and \ref{lem:Hurwitz:aver} imply that $y_n =
\sqrt{\gamma_n} O_{w.p.1}(1) O_{L^2}(1)$. It holds
\begin{align*}
  z_{n+1} &= x_{n+1} \un_{\lim_q x_q =0}- y_{n+1} \\
& = \left(\Id + \gamma_{n+1} A \right) (x_n \un_{\lim_q x_q =0}-y_n)
  +\gamma_{n+1} \zeta_{n+1}^{(2)} \un_{\lim_q x_q =0} \\
& = \left(\Id + \gamma_{n+1} A \right) z_n +
  \gamma_{n+1} \zeta_{n+1}^{(2)} \un_{\lim_q x_q =0} \eqsp.
\end{align*}
Under the stated assumptions, Lemmas~\ref{lem:Hurwitz:limit} and
\ref{lem:Hurwitz:aver} imply that $z_n = o_{w.p.1}(1)$. We thus also have $ y_n
= x_n \un_{\lim_q x_q =0} - z_n = o_{w.p.1}(1)$.  In addition,
\[
|z_{n+1}| \leq | \Id + \gamma_{n+1} A | \ |z_n| + \gamma_{n+1} \ |
\zeta_{n+1}^{(2)} |\un_{\lim_q x_q =0} \eqsp,
\]
and since $A$ is a Hurwitz matrix, there exists a constant $L'>0$ such that $|
\Id + \gamma_{n+1} A | \leq \exp(-L' \gamma_{n+1})$ (see
Lemma~\ref{lem:Hurwitz:limit}). Hence,
\begin{multline*}
  |z_{n+1}|  \leq \exp(-L' \gamma_{n+1}) |z_n| + O_{w.p.1}(1) \ \gamma_{n+1}
  \left( |y_n|^2 + |z_n|^2 \right)   \\
   \leq \exp(- L'\gamma_{n+1}) \left\{ 1 + O_{w.p.1}(1) \ \exp(L' \gamma_{n+1})
    \gamma_{n+1} |z_n| \right\} \ |z_n| + O_{w.p.1}(1) \ \gamma_{n+1} |y_n|^2
  \eqsp.
\end{multline*}
Let $\delta \in(0, L')$. Since $z_n = o_{w.p.1}(1)$, there exists a r.v. $K$
which is finite w.p.1. such that for any $k \geq K$, $|O_{w.p.1}(1) \exp(L'
\gamma_{k+1}) z_k| \leq \delta$.  Therefore, upon noting that for any $x \geq
0$, $1+ x \leq \exp(x)$, for any $n \geq K$,
\begin{align*}
  |z_{n+1}| & \leq \exp(-(L'-\delta) \gamma_{n+1}) |z_n| +  O_{w.p.1}(1) \ \gamma_{n+1} |y_n|^2 \\
  & \leq \exp\left( -(L'-\delta) \sum_{k=K+1}^{n+1} \gamma_k \right) |z_K|  \\
  & \qquad + O_{w.p.1}(1) \ \sum_{k=K+1}^{n+1} \gamma_k \exp\left( -(L'-\delta)
    \sum_{j=k+1}^{n+1} \gamma_k \right) |y_{k-1}|^2 \\
  & \leq O_{w.p.1}(1) \  \exp\left( -(L'-\delta) \sum_{k=1}^{n+1} \gamma_k \right) \\
  & \qquad + O_{w.p.1}(1) \ \sum_{k=1}^{n+1} \gamma_k \exp\left( -(L'-\delta)
    \sum_{j=k+1}^{n+1} \gamma_k \right) |y_{k-1}|^2 \\
  & \qquad  + O_{w.p.1}(1) \ \exp\left( -(L'-\delta) \sum_{j=K+1}^{n+1} \gamma_k \right)
  \eqsp.
\end{align*}
Since $y_n = \sqrt{\gamma_n} O_{L^2}(1)$, C\ref{hyp:C2}\ref{hyp:C2:a:1} and
Lemma~\ref{lem:Hurwitz:aver} imply that $z_n = \gamma_n O_{w.p.1}(1) \ 
O_{L^1}(1)$.
\end{proof}

\paragraph{Proof of Proposition~\ref{prop:L2norm:theta}}
By (\ref{eq:Taylor:h})
\begin{multline*}
  \theta_{n+1} - \theta_\star = \theta_n -\theta_\star + \gamma_{n+1} \nabla
  h(\theta_\star) \ \left( \theta_n -\theta_\star \right) \\
  + \gamma_{n+1} \left(e_{n+1} + r_{n+1} \right) + \gamma_{n+1} \left( \theta_n
    -\theta_\star \right)^T R^{(n)}_\bullet \left( \theta_n -\theta_\star
  \right)
\end{multline*}
Let $m \geq 1$. We apply Lemma~\ref{lem:averaging:2} with $x_n \leftarrow
(\theta_n - \theta_\star) \un_{\A_m}$, $A \leftarrow \nabla h(\theta_\star)$,
$\zeta_{n+1}^{(1)} = (e_{n+1} + r_{n+1}) \un_{\A_m}$ and $\zeta_{n+1}^{(2)} =
\left( \theta_n -\theta_\star \right)^T R^{(n)}_\bullet \left( \theta_n
  -\theta_\star \right) \un_{\A_m}$. Under C\ref{hyp:C1}\ref{hyp:C1:c}, $A$ is a Hurwitz
matrix and $|\zeta_{n+1}^{(2)}| \un_{\lim_q \theta_q = \theta_\star} =
O_{w.p.1}(1) \ |x_n |^2$.

We write $ \zeta_{n+1}^{(1)} = \left( e_{n+1} \un_{\A_{m,n}} + e_{n+1} \left(1
    - \un_{\A_{m,n}}\right) + r_{n+1} \right) \un_{\A_m}$.  Under
C\ref{hyp:C2}, AVER\ref{hyp:Aver3}\ref{hyp:Aver3:a}-\ref{hyp:Aver3:b}, Lemmas
\ref{lem:Hurwitz:limit} and \ref{lem:Hurwitz:aver} imply
\[
\sum_{k=1}^n \gamma_k \psi_\star(n+1,k+1) \ e_{k} \un_{\A_{m,k-1}} =
\sqrt{\gamma_n} O_{L^2}(1) \eqsp.
\]
Upon noting that $\un_{\A_m} \left(1- \un_{\A_{m,k}} \right) = 0$ for all $k
\geq K$ where $K$ is a r.v. finite w.p.1.
\begin{multline*}
  \left(\sum_{k=1}^n \gamma_k \psi_\star(n+1,k+1) \ e_{k} \left(1-
      \un_{\A_{m,k-1}} \right) \right)
  \un_{\A_m} \\
  = \left(\sum_{k=1}^K \gamma_k \psi_\star(n+1,k+1) \ e_{k} \left(1-
      \un_{\A_{m,k-1}} \right) \right) \un_{\A_m} \eqsp.
\end{multline*}
Therefore, by Lemma~\ref{lem:Hurwitz:aver}, this second term is
$\sqrt{\gamma_n} O_{w.p.1}(1)$. Finally, Lemma~\ref{lem:Hurwitz:aver} and
AVER\ref{hyp:Aver4}\ref{hyp:Aver4:a}-\ref{hyp:Aver4:a'} imply that the last
term is $\sqrt{\gamma_n} O_{w.p.1}(1) O_{L^2}(1)$ (the proof is on the same
lines as the proof of Proposition~\ref{prop:rho:rate} and details are omitted).

\subsection{Proof of Theorem~\ref{theo:clt:aver}}
The proof is adapted from the proof of Delyon~\cite[Theorem 26]{delyon:2000}.  Under
C\ref{hyp:C1}\ref{hyp:C1:c}, $\nabla h(\theta_\star)$ is invertible.  By
(\ref{eq:definition:theta}) and Lemma~\ref{lem:averaging} applied with $x_k
\leftarrow \theta_k - \theta_\star$ and $A \leftarrow \nabla h(\theta_\star)$,
we have 
\[
\sqrt{n} \left(\btheta_n - \theta_{\star} \right) = -\nabla
h(\theta_\star)^{-1} \frac{\sqrt{n}}{n+1} \sum_{k=0}^n e_{k+1} + \sqrt{n} Z_n
\]
where 
\begin{align*}
  \nabla h(\theta_\star) Z_n & \eqdef - \frac{1}{n+1} \sum_{k=0}^n r_{k+1} -
  \frac{1}{n+1} \sum_{k=0}^n \left( h(\theta_k) - \nabla h
    (\theta_\star) (\theta_k -\theta_\star)  \right)\\
  &\hspace{-1cm} + \frac{1}{n+1}
  \left(\frac{\theta_{n+1}-\theta_\star}{\gamma_{n+1}} -
    \frac{\theta_0-\theta_\star}{\gamma_1} \right) + \frac{1}{n+1} \sum_{k=1}^n
  \left( \frac{1}{\gamma_k} -\frac{1}{\gamma_{k+1}}\right) \left( \theta_k -
    \theta_\star \right) \eqsp.
\end{align*}
We prove that $\sqrt{n} Z_n \un_{\lim_q \theta_q =\theta_\star} \plim 0$;
combined with AVER\ref{hyp:Aver3}\ref{hyp:Aver3:c}, this will conclude the
proof. Since $\lim_m \PP(\A_m \vert \lim_q \theta_q = \theta_\star)=1$, it is
sufficient to prove that for any $m \geq 1$, $\sqrt{n} Z_n \un_{\A_m}
\un_{\lim_q \theta_q =\theta_\star} \plim 0$. Let $m \geq 1$. By
AVER\ref{hyp:Aver4}\ref{hyp:Aver4:b}, it holds $n^{-1/2} \sum_{k=0}^n r_{k+1}
\un_{\A_m} \un_{\lim_q \theta_q =\theta_\star} \plim 0$.  By
(\ref{eq:Taylor:h}),
\[
\frac{1}{n+1} \sum_{k=0}^n \left( h(\theta_k) - \nabla h (\theta_\star)
  (\theta_k -\theta_\star) \right)= \frac{1}{n+1} \sum_{k=0}^n \left( \theta_k
  -\theta_\star\right)^T R^{(k)}_\bullet \left( \theta_k -\theta_\star\right) \eqsp,
\]
and by C\ref{hyp:C1}\ref{hyp:C1:b},  $R^{(k)}_\bullet \un_{\lim_q \theta_q =\theta_\star} = O_{w.p.1}(1)$.
Therefore, by Proposition~\ref{prop:L2norm:theta}, 
\[
  \frac{\sqrt{n}}{n+1} \sum_{k=0}^n \left( h(\theta_k) - \nabla h
    (\theta_\star) (\theta_k -\theta_\star) \right) \un_{\A_m} \un_{\lim_q
    \theta_q =\theta_\star} = \left( \frac{\sqrt{n}}{n+1} \sum_{k=0}^n \gamma_k
    W_k \overline{W}_k \right) \eqsp,
\]
where $W_k = O_{w.p.1.}(1)$ and $\overline{W}_k = O_{L^1}(1)$.
AVER\ref{hyp:Aver2} implies that this term tends to zero in probability.
Proposition~\ref{prop:L2norm:theta} and AVER\ref{hyp:Aver2} imply that
\[
\un_{\A_m} \un_{\lim_q \theta_q =\theta_\star}\frac{\sqrt{n}}{n+1}
\left(\frac{\theta_{n+1}-\theta_\star}{\gamma_{n+1}} -
  \frac{\theta_0-\theta_\star}{\gamma_1} \right) = \frac{O_{L^1}(1)
  O_{w.p.1.}(1)}{\sqrt{(n+1) \gamma_{n+1}} } + o_{w.p.1.}(1) \plim 0 \eqsp.
\]

Finally, Proposition~\ref{prop:L2norm:theta} and
AVER\ref{hyp:Aver2} also imply that
\begin{multline*}
  \un_{\A_m} \un_{\lim_q \theta_q
    =\theta_\star}\frac{\sqrt{n}}{n+1}\sum_{k=1}^n \left( \frac{1}{\gamma_k}
    -\frac{1}{\gamma_{k+1}}\right) \left( \theta_k - \theta_\star \right)  \\
  = \left( \frac{1}{ \sqrt{n}}\sum_{k=1}^n \left| \frac{1}{\gamma_k}
      -\frac{1}{\gamma_{k+1}}\right| \gamma_k^{1/2} W_k \sqrt{\overline{W}_k} \right) 
\end{multline*}
where $W_k = O_{w.p.1.}(1)$ and $\overline{W}_k = O_{L^1}(1)$. This term tends
to zero in probability.

\begin{lemma}
\label{lem:AVER1}
C\ref{hyp:C3} and $\lim_n n \gamma_n >0$ imply AVER\ref{hyp:Aver3}.
\end{lemma}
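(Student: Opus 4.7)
The plan is to verify the three parts of AVER\ref{hyp:Aver3} in turn. Part (a) is immediate since C\ref{hyp:C3}\ref{hyp:C3:a} gives exactly the martingale-increment property. For part (b), I would take the same $\{\A_m,\A_{m,k}\}$ furnished by C\ref{hyp:C3}\ref{hyp:C3:b}; the $L^2$ bound then follows from the $(2+\tau)$-moment bound by Jensen's (Lyapunov's) inequality, and the convergence $\un_{\A_{m,k}}\to\un_{\A_m}$ on the set of interest is part of C\ref{hyp:C3}\ref{hyp:C3:b}.

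The real work is in (c), a CLT for the martingale-difference array $\mathcal{E}_{n+1}$. As in the proof of Theorem~\ref{theo:clt}, the issue is that $\{e_k\}$ is a martingale increment under $\PP$ but not under $\PP(\cdot\vert\lim_q\theta_q=\theta_\star)$. I would therefore use Lemma~\ref{lem:PierreP} to pick an $\F_k$-adapted sequence $\{A_k\}$ with $\un_{A_k}\to\un_{\lim_q\theta_q=\theta_\star}$ and apply the Hall--Heyde martingale CLT (\cite[Corollary~3.1]{hall:heyde:1980}) to the triangular array
\[
Y_{n,k}=\frac{e_{k+1}\un_{A_k}}{\sqrt{n+1}},\qquad k=0,\dots,n.
\]
Writing $e_{k+1}=e_{k+1}\un_{A_k}+e_{k+1}(1-\un_{A_k})$, the second piece is eventually null on $\{\lim_q\theta_q=\theta_\star\}$ (since $\un_{A_k}=\un_{\lim_q\theta_q=\theta_\star}$ for all $k$ larger than some almost-surely finite index), hence does not alter the limiting characteristic function weighted by $\un_{\lim_q\theta_q=\theta_\star}$.

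To check Lindeberg, I would use the crude bound $\un_{\{|Y_{n,k}|\geq\epsilon\}}\leq \epsilon^{-\tau}|Y_{n,k}|^\tau$, so that the Lindeberg sum on $\A_m\cap\{\lim_q\theta_q=\theta_\star\}$ is controlled by $(n+1)^{-(1+\tau/2)}\sum_{k=0}^n \PE[|e_{k+1}|^{2+\tau}\un_{\A_{m,k}}]$; this tends to $0$ because $\un_{\A_m}\un_{A_k}$ eventually coincides with $\un_{\A_{m,k}}$ and the supremum in C\ref{hyp:C3}\ref{hyp:C3:b} is finite. For the conditional variance, I would decompose $\PE[e_{k+1}e_{k+1}^T\vert\F_k]=U_\star+D_k^{(1)}+D_k^{(2)}$ using C\ref{hyp:C3}\ref{hyp:C3:c}, and show
\[
\frac{1}{n+1}\sum_{k=0}^n\bigl(U_\star+D_k^{(1)}+D_k^{(2)}\bigr)\un_{A_k}\plim U_\star\un_{\lim_q\theta_q=\theta_\star}
\]
on $\A_m\cap\{\lim_q\theta_q=\theta_\star\}$. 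The $U_\star$ term handles itself by Cesàro (using $\un_{A_k}\to\un_{\lim_q\theta_q=\theta_\star}$), the $D_k^{(1)}$ term vanishes by a Cesàro average of an almost-surely null sequence.

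The main obstacle is the $D_k^{(2)}$ term, because the hypothesis controls $\sum_{k=1}^n D_k^{(2)}$ without the indicator $\un_{A_k}$ and only in a weighted form. Here I would exploit $\lim_n n\gamma_n>0$: since
\[
\frac{1}{n+1}\PE\left[\bigl|\sum_{k=1}^n D_k^{(2)}\bigr|\un_{\lim_q\theta_q=\theta_\star}\un_{\A_m}\right]\leq \frac{1}{(n+1)\gamma_n}\cdot\gamma_n\PE\left[\bigl|\sum_{k=1}^n D_k^{(2)}\bigr|\un_{\lim_q\theta_q=\theta_\star}\un_{\A_m}\right],
\]
the right-hand side tends to $0$ by C\ref{hyp:C3}\ref{hyp:C3:c} together with $(n+1)\gamma_n$ bounded below. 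An Abel-summation-by-parts argument (in the spirit of the one at the end of Section~\ref{sec:proof:theo:clt}) then yields the same $L^1$ bound for the Cesàro average of $D_k^{(2)}\un_{A_k}$, after absorbing the finitely many indices where $\un_{A_k}\neq\un_{\lim_q\theta_q=\theta_\star}$ into a vanishing remainder. Combining these three contributions gives convergence of the conditional variance, and Hall--Heyde then delivers AVER\ref{hyp:Aver3}\ref{hyp:Aver3:c}.
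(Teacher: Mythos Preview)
Your approach is essentially the paper's: apply the Hall--Heyde martingale CLT to an adapted truncation of the increments, check Lindeberg via the $(2+\tau)$-moment bound of C\ref{hyp:C3}\ref{hyp:C3:b}, and handle the conditional variance through the decomposition $U_\star+D_k^{(1)}+D_k^{(2)}$ with Ces\`aro for $D_k^{(1)}$ and the bound $\tfrac{1}{n+1}\leq \tfrac{1}{(n+1)\gamma_n}\cdot\gamma_n$ for $D_k^{(2)}$. Two minor streamlinings in the paper: it inserts \emph{both} indicators $\un_{\A_{m,k}}\un_{A_k}$ into the array $T_{1,n}=(n+1)^{-1/2}\sum_k e_{k+1}\un_{\A_{m,k}}\un_{A_k}$, so that the Lindeberg estimate is a direct $L^1$ bound without any pathwise splitting; and for $D_k^{(2)}$ it first reduces (exactly via your ``finitely many indices where $\un_{A_k}\neq\un_{\lim_q\theta_q=\theta_\star}$'' observation) to the sum $\un_{\A_m}\un_{\lim_q\theta_q=\theta_\star}\tfrac{1}{n+1}\sum_k D_k^{(2)}$ and then applies your displayed inequality directly---no Abel transform is needed, since the weights here are constant in $k$.
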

\begin{proof}
  C\ref{hyp:C3} implies trivially
  AVER\ref{hyp:Aver3}\ref{hyp:Aver3:a}-\ref{hyp:Aver3:b}. We only have to check
  AVER\ref{hyp:Aver3}\ref{hyp:Aver3:c}, or equivalently, prove that for any $m \geq
  1$,
\[
\lim_n \PE\left[ \ \exp\left(i t^T \mathcal{E}_{n+1} \un_{\lim_q \theta_q =
      \theta_\star} \un_{\A_m} \right) \right] =\PE\left[ \ \exp\left(i t^T U_\star t
    \un_{\lim_q \theta_q = \theta_\star} \un_{\A_m}\right) \right] \eqsp.
\]
Write $ \mathcal{E}_{n+1}\un_{\lim_q \theta_q = \theta_\star} \un_{\A_m} =
T_{1,n} + T_{2,n}$ with $ T_{1,n} = (n+1)^{-1/2} \sum_{k=0}^n e_{k+1}
\un_{\A_{m,k}} \un_{A_k} $. By (\ref{eq:definition:Ansequence}) and
C\ref{hyp:C3}\ref{hyp:C3:b}, $T_{2,n} = o_{w.p.1.}(1)$. Observe that
$\PE\left[e_{k+1} \un_{\A_{m,k}} \un_{\A_k} \vert \F_k \right] =0$ so that the
convergence in distribution of $T_{1,n}$ will be established by applying
results on martingale-arrays: we check the assumptions of Hall and
Heyde~\cite[Corollary 3.1.]{hall:heyde:1980}.  By
C\ref{hyp:C3}\ref{hyp:Aver3:b}, it is easily checked that for any $\epsilon
>0$, there exists a constant $C$ such that for any $n$,
\[
\PE\left[ \frac{1}{n} \sum_{k=0}^n \PE\left[|e_{k+1}|^2 \un_{|e_{k+1}| \geq
      \epsilon \sqrt{n}}\vert \F_{k}\right] \un_{\A_{m,k}} \right] \leq
\frac{C}{n^{\tau/2}} \eqsp.
\]
Hence, $ n^{-1} \sum_{k=0}^n \PE\left[|e_{k+1}|^2 \un_{|e_{k+1}| \geq \epsilon
    \sqrt{n}}\vert \F_{k}\right] \un_{\A_{m,k}} \un_{A_k}\plim 0$. We now prove that
\begin{equation}
  \label{eq:tool0:C2toAVER2}
 \frac{1}{n+1} \sum_{k=0}^n \PE\left[ e_{k+1} e_{k+1}^T \vert \F_{k} \right]
\un_{\A_{m,k}}  \un_{A_k}\plim U_\star \un_{\A_m} \un_{\lim_q
  \theta_q = \theta_\star} \eqsp.
\end{equation}
As above, we claim that this is equivalent to the proof that for any $m\geq 1$,
\[
\un_{\lim_q \theta_q = \theta_\star} \un_{\A_m} \frac{1}{n+1} \sum_{k=0}^n
\left( \PE\left[ e_{k+1} e_{k+1}^T \vert \F_{k} \right] - U_\star \right) \plim
0 \eqsp.
\]
C\ref{hyp:C3}\ref{hyp:Aver3:c} and the Cesaro lemma imply that w.p.1, on the
set $\A_m \cap \{\lim_q \theta_q = \theta_\star \}$, $ (n+1)^{-1} \sum_{k=0}^n
D_{k}^{(1)} \aslim 0$. Finally, under C\ref{hyp:C3}\ref{hyp:Aver3:c},
\[
\frac{1}{n+1} \PE\left[ \left| \sum_{k=0}^n D_{k}^{(2)} \un_{\lim_q \theta_q =
      \theta_\star} \un_{\A_m} \right| \right] = \frac{o(1)}{n \gamma_n} 
\]
and the rhs tends to zero since $\lim_n n \gamma_n >0$. This concludes the
proof of (\ref{eq:tool0:C2toAVER2}) and the proof of the Lemma.
\end{proof}

\subsection{Technical lemmas}

\begin{lemma}
  \label{lem:prelim:PierreP}
  Let $(\Omega, \mathcal{A},\mu)$ be a measured space, where $\mu$ is a bounded
  positive measure. Let $\mathcal{G}$ be an algebra generating $\mathcal{A}$.
  Then for all $B \in \mathcal{A}$ and $\epsilon >0$, we can find $A \in
  \mathcal{G}$ such that $\mu(A \Delta B) < \epsilon$.
\end{lemma}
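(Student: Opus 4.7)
}
The plan is the classical ``good sets'' argument: define the family of measurable sets that can be approximated by members of the algebra, and show that this family is large enough to exhaust $\mathcal{A}$. Concretely, I would introduce
\[
\mathcal{D} \eqdef \left\{ B \in \mathcal{A} \ : \ \forall \epsilon >0, \ \exists A \in \mathcal{G} \text{ such that } \mu(A \Delta B) < \epsilon \right\} \eqsp,
\]
and verify that $\mathcal{G} \subseteq \mathcal{D} \subseteq \mathcal{A}$, the first inclusion being trivial (take $A = B$). The goal is then to prove that $\mathcal{D}$ is a monotone class; by the monotone class theorem, the monotone class generated by the algebra $\mathcal{G}$ coincides with $\sigma(\mathcal{G}) = \mathcal{A}$, which yields $\mathcal{D} = \mathcal{A}$ and concludes the proof.

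To show that $\mathcal{D}$ is a monotone class, I would rely on the two standard properties of the symmetric difference: $\mu(A^c \Delta B^c) = \mu(A \Delta B)$, and $\mu\bigl((A_1 \cup A_2) \Delta (B_1 \cup B_2)\bigr) \leq \mu(A_1 \Delta B_1) + \mu(A_2 \Delta B_2)$. Let $\{B_n, n\geq 1\} \subseteq \mathcal{D}$ be non-decreasing with union $B \in \mathcal{A}$. Since $\mu$ is bounded, $\mu(B \setminus B_n) \to 0$, so for any $\epsilon >0$ there exists $N$ with $\mu(B \Delta B_N) < \epsilon/2$; pick $A \in \mathcal{G}$ with $\mu(A \Delta B_N) < \epsilon/2$ and deduce $\mu(A \Delta B) < \epsilon$ by the triangle inequality for symmetric difference. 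The decreasing case is handled symmetrically, either via complementation or by the same argument applied to $\mu(B_n \setminus B) \to 0$, which again uses boundedness of $\mu$.

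The main (and rather mild) obstacle is precisely the appeal to boundedness of $\mu$ when passing to monotone limits: without it, $\mu(B \setminus B_n)$ need not tend to zero. Since the hypothesis provides a bounded measure, this step goes through without modification, and the proof reduces to invoking the monotone class theorem.
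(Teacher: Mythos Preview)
Your proof is correct and follows the same ``good sets'' principle as the paper: both define the family of sets approximable by elements of $\mathcal{G}$ and show it exhausts $\mathcal{A}$. The only difference is the structural theorem invoked. You show that $\mathcal{D}$ is a monotone class and appeal to the monotone class theorem; the paper instead proves directly that $\mathcal{D}$ is a $\sigma$-algebra, checking stability under complementation (via $A \Delta B = A^c \Delta B^c$), under finite union (via $(A_1 \cup A_2)\Delta(A_1' \cup A_2') \subseteq (A_1 \Delta A_1') \cup (A_2 \Delta A_2')$), and under countable union by first reducing to pairwise disjoint sets and then truncating, which is where finiteness of $\mu$ enters. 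Your route is slightly cleaner, since the monotone-limit step $\mu(B \Delta B_N) \to 0$ is immediate from continuity of a finite measure and avoids the disjointification argument; the paper's route is more self-contained, as it does not rely on the monotone class theorem as a black box. Either way the content is the same.
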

\begin{proof}
  Let $\mathcal{S}\eqdef \{ A \subset \Omega, \forall \epsilon >0, \exists A' \in
  \mathcal{G}, \mu(A \Delta A') \leq \epsilon\}$. We prove that $\mathcal{S}$
  is a $\sigma$-algebra; since it contains $\mathcal{G}$ by definition, this
  yields the result.
  
  $\Omega \in \mathcal{S}$ since $\Omega \in \mathcal{G}$.  Let $A \in
  \mathcal{S}$: we prove that $A^c \eqdef \Omega \setminus A \in \mathcal{S}$.
  Let $\epsilon >0$; there exists $A' \in \mathcal{G}$ such that $\mu(A \Delta
  A') \leq \epsilon$. Since $A \Delta B = A^c \Delta B^c$, it holds
\[
\mu(A^c \Delta (A')^c) = \mu(A \Delta A') \leq \epsilon;
\]
$(A')^c \in \mathcal{G}$ since $\mathcal{G}$ is an algebra, thus showing that
$A^c \in \mathcal{S}$.

Finally, we prove that $\mathcal{S}$ is stable by countable union. We first
prove it is stable by finite union, or equivalently by union of two elements.
Let $A_1, A_2$ be elements of $\mathcal{S}$ and fix $\epsilon >0$. There exists
$A'_k \in \mathcal{G}$ such that $\mu(A_k \Delta A'_k) \leq \epsilon /2$.  Upon
noting that
\begin{equation}
  \label{eq:Property:diffset}
  (A_1 \cup A_2) \Delta (A'_1 \cup A'_2) \subset (A_1 \Delta A'_1) \cup (A_2 \Delta A'_2)
\end{equation}
it holds
\[
\mu\left( (A_1 \cup A_2) \Delta (A'_1 \cup A'_2) \right) \leq \epsilon \eqsp.
\]
This concludes the proof since $A'_1 \cup A'_2 \in \mathcal{G}$. Let us
consider the countable case. Let $(A_k, k \geq 1)$ be a sequence of
$\mathcal{S}$ and fix $\epsilon >0$; since $\mathcal{S}$ is stable under
complement and finite union, we can assume without loss of generality that the
sets $A_k$ are pairwise disjoint.  For any $k$, there exists $A'_k \in
\mathcal{G}$ such that $\mu(A_k \Delta A'_k) \leq \epsilon 2^{-k}$. Since
$(A_k, k \geq 0)$ are pairwise disjoint
\[
\mu\left( \bigcup_k A_k\right) = \sum_{k \geq 1} \mu(A_k) \eqsp;
\]
since $\mu$ is finite, there exists $K_\epsilon$ such that $\mu(\sum_{k >
  K_\epsilon} A_k) \leq \epsilon/2$. Using again (\ref{eq:Property:diffset}) it
holds
\begin{align*}
\mu\left( (\bigcup_k A_k) \Delta (\bigcup_{k \leq K_\epsilon} A'_k)\right)& \leq
\mu\left( (\bigcup_{k \leq K_\epsilon} A_k) \Delta (\bigcup_{k \leq K_\epsilon}
  A'_k)\right) + \mu\left( \bigcup_{k > K_\epsilon} A_k\right)  \\
&\leq  \sum_{k =1}^{K_\epsilon} \mu\left( A_k \Delta A'_k \right) + \epsilon/2  \\
& \leq \epsilon \eqsp.
\end{align*}
Since $\bigcup_{k \leq K_\epsilon} A'_k \in \mathcal{G}$, this concludes the
proof of the sub-additivity.

\end{proof}

\begin{lemma}
  \label{lem:PierreP} Let  $( \Omega, \mathcal{A}, \PP, \{\F_n, n\geq 0\})$ be a filtered probability space and set $\F_\infty = \sigma(\F_n, n\geq 1)$.
  Let $B \in \F_\infty$.  There exists a $\F_n$-adapted sequence $\{A_n,n \geq
  0\}$ such that $\lim_n \un_{A_n} = \un_B$ $\PP$-a.s.
\end{lemma}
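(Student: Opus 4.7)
The plan is to use the previous lemma (Lemma~\ref{lem:prelim:PierreP}) to approximate $B$ by sets in the algebra $\mathcal{G} \eqdef \bigcup_{n \geq 0} \F_n$, then to diagonalize. Since $\{\F_n, n\geq 0\}$ is a filtration, $\mathcal{G}$ is genuinely an algebra (closed under complement and under finite union, because any two of its elements lie in a common $\F_m$), and by definition $\sigma(\mathcal{G}) = \F_\infty$.

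First, I would apply Lemma~\ref{lem:prelim:PierreP} with $\mu = \PP$, $\mathcal{A}=\F_\infty$ and $\epsilon = 2^{-k}$: for each $k \geq 1$ there exists $B_k \in \mathcal{G}$ with $\PP(B \Delta B_k) \leq 2^{-k}$. Because $B_k \in \mathcal{G}$, there exists an integer $n_k$ such that $B_k \in \F_{n_k}$; without loss of generality, the sequence $(n_k)$ is strictly increasing and $n_1 \geq 0$. I would then define the adapted sequence by
\[
A_n = \left\{
\begin{array}{ll}
\emptyset, & \text{if } 0 \leq n < n_1, \\
B_k, & \text{if } n_k \leq n < n_{k+1}.
\end{array}
\right.
\]
For each $n$ with $n_k \leq n < n_{k+1}$, $A_n = B_k \in \F_{n_k} \subseteq \F_n$, so the sequence is $\F_n$-adapted.

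It remains to show $\un_{A_n} \to \un_B$ almost surely. Since $\sum_k \PP(B \Delta B_k) \leq \sum_k 2^{-k} < \infty$, the Borel--Cantelli lemma gives $\PP(\limsup_k (B \Delta B_k)) = 0$; equivalently, $\PP$-a.s.\ there exists $K(\omega)$ such that $\omega \notin B \Delta B_k$ for all $k \geq K(\omega)$, i.e.\ $\un_{B_k}(\omega) = \un_B(\omega)$ for all $k \geq K(\omega)$. For such $\omega$, as soon as $n \geq n_{K(\omega)}$, we have $A_n = B_{k(n)}$ with $k(n) \geq K(\omega)$, hence $\un_{A_n}(\omega) = \un_B(\omega)$. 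This yields $\lim_n \un_{A_n} = \un_B$ almost surely.

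No real obstacle: the only subtlety is checking that $\mathcal{G}$ is an algebra (which uses that the $\F_n$ are nested) so that Lemma~\ref{lem:prelim:PierreP} actually applies, and then turning the summable-in-$k$ approximation in $\F_{n_k}$ into an $\F_n$-adapted sequence by the piecewise-constant interpolation above; Borel--Cantelli then upgrades $L^1$ approximation to almost-sure convergence of the indicators.
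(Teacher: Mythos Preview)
Your proof is correct and follows essentially the same approach as the paper: approximate $B$ by sets in the algebra $\bigcup_n \F_n$ via Lemma~\ref{lem:prelim:PierreP}, then interpolate piecewise-constantly to obtain an adapted sequence. The only cosmetic difference is that the paper uses approximation rate $1/n$ and passes to an a.s.\ convergent subsequence, whereas you choose a summable rate $2^{-k}$ from the outset and invoke Borel--Cantelli directly, which is slightly more streamlined.
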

  \begin{proof}
    For any $\epsilon >0$, there exist $m \geq 1$ and $\tilde A \in \F_m$ such
    that $\PE\left[| \un_{\tilde A} - \un_B| \right] \leq \epsilon$ (see
    Lemma~\ref{lem:prelim:PierreP}). Therefore, for any $n \geq 1$, there exist
    sets $\tilde A_{n} \in \F_{m_n}$ such that $\PE\left[| \un_{\tilde A_n} -
      \un_B| \right] \leq 1/n$. This implies almost-sure convergence of a
    subsequence $\{\tilde A_{\phi_n}, n\geq 0\}$ to $\un_B$, with $\tilde
    A_{\phi_n} \in \F_{m_{\phi_n}}$.  Note that we can assume without loss of
    generality that the sequence $\{m_{\phi_n} n\geq 1 \}$ is non decreasing.
    For any $k \in [m_{\phi_n}, m_{\phi_{n+1}}[$, set $A_k = \tilde
    A_{\phi_n}$. Then, $A_k \in \F_{m_{\phi_n}} \subseteq \F_k$ and
\[
\lim_k \un_{A_k} = \lim_n \un_{\tilde A_{\phi_n}}= \un_B \eqsp.
\]
  \end{proof}

\begin{lemma}
\label{lem:Hurwitz:limit}
Let $|\cdot|$ be any matrix norm. Let $\{A_{k},\ k\geq 0\}$ be a sequence of
square matrix such that $ \lim_{k}| A_{k}-A|=0$ where $A$ is a Hurwitz matrix.
Denote by $-L$, $L>0$, the largest real part of its eigenvalues. Let
$\{\gamma_{k},\ k\geq 0\}$ be a positive sequence such that $ \lim_k \gamma_k
=0$.  For any $0<L'<L$, there exists a positive constant $C$ such that for any
$k\leq n$
\[
\left| (\mathrm{Id}+\gamma_{n}A_{n}) \cdots (\mathrm{Id}+\gamma_{k+1}A_{k+1})
  (\mathrm{Id}+\gamma_{k}A_{k}) \right|\leq C\exp(- L'\sum_{j=k}^{n}\gamma_{j})
\eqsp.
\]
\end{lemma}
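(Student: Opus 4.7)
\textbf{Proof plan for Lemma~\ref{lem:Hurwitz:limit}.} The plan is to reduce everything to a well-chosen Lyapunov norm in which the one-step operator $\Id+\gamma_k A_k$ is a strict contraction of rate $\exp(-L'\gamma_k)$ for all $k$ large enough. Since all norms on the space of $d\times d$ matrices are equivalent and we allow the constant $C$ to absorb the equivalence factor, it suffices to prove the estimate in one convenient norm.

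First I would fix an intermediate rate $L''\in(L',L)$ and invoke the Lyapunov theorem: since $A$ is Hurwitz with largest eigenvalue-real-part $-L$, the matrix $-(A^TP+PA)$ is positive definite for some symmetric positive definite $P$, and $P$ can be chosen so that $\langle x,Ax\rangle_P\leq -L''|x|_P^2$ for every $x\in\Rset^d$, where $\langle\cdot,\cdot\rangle_P$ and $|\cdot|_P$ denote the inner product and norm associated with $P$. Expanding
\[
|(\Id+\gamma A_k)x|_P^2 = |x|_P^2 + 2\gamma\langle x,A_k x\rangle_P + \gamma^2 |A_k x|_P^2,
\]
and writing $A_k=A+(A_k-A)$, I obtain the bound
\[
|(\Id+\gamma A_k)x|_P^2 \leq \bigl(1 + 2\gamma(-L''+\varepsilon_k) + \gamma^2 M^2\bigr)|x|_P^2,
\]
where $\varepsilon_k\eqdef|A_k-A|_P\to 0$ and $M\eqdef\sup_k|A_k|_P<\infty$.

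Next, because $\gamma_k\to 0$ and $\varepsilon_k\to 0$, there exists $k_1$ such that for every $k\geq k_1$,
\[
2\varepsilon_k + \gamma_k M^2 \leq 2(L''-L'),
\]
which gives $|\Id+\gamma_k A_k|_P^2 \leq 1-2L'\gamma_k \leq \exp(-2L'\gamma_k)$, hence $|\Id+\gamma_k A_k|_P\leq \exp(-L'\gamma_k)$. Taking the product over $j=k_1,\ldots,n$ yields the claimed exponential decay for all pairs $k_1\leq k\leq n$. For pairs with $k<k_1\leq n$, the missing initial factor $(\Id+\gamma_{k_1-1}A_{k_1-1})\cdots(\Id+\gamma_k A_k)$ involves at most $k_1$ terms, each of bounded $P$-norm, and the correction $\exp(L'\sum_{j=k}^{k_1-1}\gamma_j)$ is bounded by a finite constant depending only on $k_1$ and $\gamma_0,\dots,\gamma_{k_1-1}$. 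Both contributions are absorbed into the constant $C$, and passing back to the original matrix norm via norm equivalence concludes the argument.

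The main technical obstacle is the construction of the Lyapunov norm with the sharp rate $L''$ arbitrarily close to $L$; everything else is a careful but routine expansion plus a sum truncation. The perturbation $A_k-A$ is handled cheaply because the quadratic-in-$\gamma$ error term $\gamma^2M^2$ and the first-order error $2\gamma\varepsilon_k$ can both be made smaller than the buffer $2(L''-L')\gamma_k$ provided by the gap between $L''$ and $L'$, as soon as $k$ is large enough.
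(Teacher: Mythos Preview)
Your proof is correct. The construction of the Lyapunov inner product with rate $L''$ arbitrarily close to $L$ is sound: since $A+L''\mathrm{Id}$ is still Hurwitz, the Lyapunov equation $(A+L''\mathrm{Id})^T P + P(A+L''\mathrm{Id}) = -\mathrm{Id}$ has a positive definite solution $P$, and this $P$ gives exactly $\langle x,Ax\rangle_P \leq -L''|x|_P^2$. The remaining expansion, the absorption of the perturbation and the quadratic term into the gap $L''-L'$, and the handling of the finitely many initial indices are all routine and handled correctly.

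However, your route differs genuinely from the paper's. The paper does not use a Lyapunov quadratic form; instead it takes the Jordan decomposition $A = S J S^{-1}$ and conjugates by the diagonal matrix $D_t = \mathrm{diag}(t,t^2,\dots,t^d)$ so that $D_t^{-1} J D_t = \Lambda + R_t$ with $\Lambda$ diagonal and $|R_t|\to 0$ as $t\to 0$. Working in the norm $|\,(SD_t)^{-1}\cdot(SD_t)\,|$, the diagonal part gives $|\mathrm{Id}+\gamma_\ell\Lambda|\leq 1-\gamma_\ell L''$ directly from the eigenvalues, while $R_t$ and $A_\ell-A$ are small perturbations. Both arguments ultimately produce a norm in which $\mathrm{Id}+\gamma_\ell A_\ell$ is a contraction of the right rate; the paper's approach is more explicit about the spectral structure, whereas yours is shorter and avoids Jordan blocks entirely by packaging the spectral information into the existence of $P$.
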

\begin{proof}
  Let $\lambda_i, i \leq d$ be the eigenvalues of $A$.  By using the Jordan
  decomposition, we write $A = S J S^{-1}$ where $S$ is a non-singular matrix,
  and $J$ is a Jordan matrix (as defined by Horn and Johnson~\cite[Definition
  3.1.1]{horn:johnson:1985} - note that the diagonal entries of $J$ are
  $\lambda_i$). 
  
  For any $t>0$, denote by $D_t$ the diagonal matrix with diagonal entries
  $(t,t^2, \cdots, t^d)$ and set
\[
A = (S D_t) \left( D_t^{-1} J D_t \right) \left(S D_t \right)^{-1} = (S D_t)
\left( \Lambda + R_t\right) \left(S D_t \right)^{-1}
\]
with $\Lambda \eqdef \mathrm{diag}(\lambda_i)$, upon noting that
\[
D_t^{-1} J D_t = \left[\begin{matrix}
   \lambda_1 & t u_1 & 0 & \cdot & 0 \\
    0 & \lambda_2 & t u_2  & \cdot & 0 \\
\cdot & & & & \cdot \\
0 & \cdot & \cdot & \lambda_{d-1} & t u_{d-1} \\
0 & \cdot & & \cdot & \lambda_d 
\end{matrix} \right]  \eqsp.
\]
Note also that $|R_t| \to 0$ as $t \to 0$.  We write
\begin{align*}
  \left(S D_t \right)^{-1} \left(I + \gamma_\ell A_\ell \right) \left(S D_t
  \right) & = \left(S D_t \right)^{-1} \left(I + \gamma_\ell A \right) \left(S
    D_t \right) + \gamma_\ell \left(S D_t \right)^{-1} \left( A_\ell -A \right)
  \left(S D_t \right)  \\
  & = I + \gamma_\ell \, D_t^{-1} J D_t + \gamma_\ell \left(S D_t \right)^{-1}
  \left( A_\ell -A \right) \left(S D_t \right)  \\
  & = I + \gamma_\ell \, \Lambda + \gamma_\ell R_t + \gamma_\ell \left(S D_t
  \right)^{-1} \left( A_\ell -A \right) \left(S D_t \right) \eqsp.
\end{align*}
Therefore, 
\begin{align*}
  \left| \left(S D_t \right)^{-1} \left(I + \gamma_\ell A_\ell \right) \left(S
      D_t \right)\right| & \leq \left| I + \gamma_\ell \, \Lambda\right| +
  \gamma_\ell \left| R_t \right| + \gamma_\ell \left| A_\ell -A \right| \left|S
    D_t \right| \left| (S D_t)^{-1} \right| \eqsp.
\end{align*}
Let $0 < L < L'' < L$. There exists $t_0$ such that for any $t \in (0, t_0)$,
$|R_t| \leq (L'' -L')/2$; and there exists $K$ such that for any $\ell \geq K$
and any $t \leq t_0$, $\left|I + \gamma_\ell \, \Lambda\right| \leq 1
-\gamma_\ell L''$ and $|A_\ell -A| \left|S D_t \right| \left| (S D_t)^{-1}
\right| \leq (L''-L')/2$. Therefore, for any $\ell \geq K$ and any $t \in
(0,t_0)$
\[
\left| \left(S D_t \right)^{-1} \left(I + \gamma_\ell A_\ell \right) \left(S
    D_t \right)\right| \leq 1 - \gamma_\ell L' \eqsp.
\]
Now we write for $K \leq k<n$ and $t \leq t_0$,
\begin{align*}
  \left|\left(I + \gamma_n A_n\right) \cdots \left(I + \gamma_k A_k\right) =
  \right| \leq \left|S D_t \right| \left| (S D_t)^{-1} \right| \ \prod_{\ell
    =k}^n \left(1 - \gamma_\ell L' \right)
\end{align*}
which concludes the proof.
\end{proof}

\begin{lemma}
\label{lem:Hurwitz:aver}
Let $\gamma_{k}$ be a positive sequence such that $ \lim_{k}\gamma_{k}=0$ and
$\sum_{k}\gamma_{k}=\infty$.  Let $\{e_{k},\ k\geq 0\}$ be a non-negative
sequence. Then
\[
\limsup_n \ \gamma_{n}^{-p}\sum_{k=1}^{n}\gamma_{k}^{p+1} \ e_{k} \ 
\exp(-b\sum_{j=k+1}^{n}\gamma_{j})\leq\frac{1}{C(b,p)}\limsup_n e_{n} \eqsp,
\]
\begin{enumerate}[(i)]
\item with $C(b,p)=b$, for any $b>0,p\geq 0$ if
  $\log(\gamma_{k-1}/\gamma_{k})=o(\gamma_{k})$.
\item with $C(b,p)=b-p/\gamma_{\star}$, for any $b\gamma_{\star}>p\geq 0$ if
  there exists $\gamma_{\star}>0$ such that
  $\log(\gamma_{k-1}/\gamma_{k})\sim\gamma_{k}/\gamma_{\star}$.
\end{enumerate}
By convention, $\sum_{j=n+1}^{n}\gamma_{j}=0$.
\end{lemma}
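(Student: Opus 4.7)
The plan is to set $M := \limsup_n e_n$, fix $\epsilon>0$, and choose $N$ so that $e_k \leq M+\epsilon$ for all $k\ge N$. I will then split the sum into a ``head'' ($1\le k<N$) and a ``tail'' ($N\le k\le n$), and analyze the two pieces separately.

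For the head, since the number of summands is bounded, it suffices to show that for every fixed $k$, $\gamma_n^{-p}\exp(-b\sum_{j=k+1}^n \gamma_j)\to 0$ as $n\to\infty$. Writing $-\log\gamma_n^p = p\sum_{j=2}^n \log(\gamma_{j-1}/\gamma_j) + \log\gamma_1^{-p}$, the hypothesis in case (i) gives $-\log\gamma_n^p = o(\sum_j\gamma_j)$ while case (ii) gives $-\log\gamma_n^p \sim (p/\gamma_\star)\sum_j\gamma_j$; combined with the factor $\exp(-b\sum_j\gamma_j)$ and the assumption $b\gamma_\star>p$ in case (ii), the head tends to $0$ exponentially fast in $\sum_j\gamma_j$.

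For the tail, define $U_n := \gamma_n^{-p}\sum_{k=N}^n \gamma_k^{p+1}\exp(-b\sum_{j=k+1}^n\gamma_j)$. Telescoping $\sum_{j=k+1}^{n+1}\gamma_j = \gamma_{n+1} + \sum_{j=k+1}^n \gamma_j$ gives the one-step recursion
\[
U_{n+1} = \gamma_{n+1} + (\gamma_n/\gamma_{n+1})^p \exp(-b\gamma_{n+1})\, U_n.
\]
In case (i), $(\gamma_n/\gamma_{n+1})^p = \exp(p\log(\gamma_n/\gamma_{n+1})) = 1 + o(\gamma_{n+1})$, so the coefficient is $1-b\gamma_{n+1}+o(\gamma_{n+1})$. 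In case (ii), $(\gamma_n/\gamma_{n+1})^p = 1 + (p/\gamma_\star)\gamma_{n+1}+o(\gamma_{n+1})$, so the coefficient is $1-(b-p/\gamma_\star)\gamma_{n+1}+o(\gamma_{n+1})$. In both cases, for any $\epsilon>0$ and $n$ sufficiently large,
\[
U_{n+1} \leq (1-\beta_{n+1})U_n + \beta_{n+1}\cdot \frac{1}{C(b,p)-\epsilon}, \qquad \beta_{n+1} := (C(b,p)-\epsilon)\gamma_{n+1},
\]
with $\beta_{n+1}\in(0,1)$ eventually and $\sum_n\beta_n=\infty$.

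I will then finish with the elementary comparison lemma: if $U_{n+1}\leq(1-\beta_n)U_n+\beta_n c$ with $\beta_n\in(0,1)$ and $\sum\beta_n=\infty$, then $(U_{n+1}-c)_+ \leq (1-\beta_n)(U_n-c)_+$, and iterating gives $\limsup_n U_n\leq c$. Multiplying through by $M+\epsilon$ and letting $\epsilon\downarrow 0$ yields the stated bound $\limsup_n\gamma_n^{-p}\sum_{k=1}^n\gamma_k^{p+1}e_k\exp(-b\sum_{j=k+1}^n\gamma_j)\leq M/C(b,p)$. The only subtle step is the careful expansion of $(\gamma_n/\gamma_{n+1})^p\exp(-b\gamma_{n+1})$ in each of the two step-size regimes, which is precisely where the distinction between the constants $C(b,p)=b$ and $C(b,p)=b-p/\gamma_\star$ appears; everything else is a standard Gronwall-type iteration on a scalar recursion.
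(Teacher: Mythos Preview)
Your proof is correct and follows essentially the same route as the paper's: both derive the one-step recursion $u_{n} = (\gamma_{n-1}/\gamma_n)^p\exp(-b\gamma_n)\,u_{n-1} + (\text{source term})$ for the normalized sum, expand the multiplier as $1 - C(b,p)\gamma_n + o(\gamma_n)$ in each step-size regime, and finish with the $(u_n-v)_+\le(1-\beta_n)(u_{n-1}-v)_+$ iteration. The only cosmetic difference is that you bound $e_k\le M+\epsilon$ and split off a finite head first, whereas the paper keeps the $e_k$ inside the recursion and applies the comparison lemma directly to $u_n=\gamma_n^{-p}x_n$ with $v\ge\limsup_n b_n^{-1}e_n$.
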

\begin{proof}
  The proof is from Delyon~\cite[Theorem 19, Chapter 4]{delyon:2000}.  Let $\{x_{n},\ 
  n \geq 0\}$ be defined by
  $x_{n}=\exp(-b\gamma_{n})x_{n-1}+\gamma_{n}^{p+1}e_{n}$ where $x_{0}=0$. Then
  by a trivial recursion, it holds
\[
x_{n}=\sum_{k=1}^{n}\gamma_{k}^{p+1}e_{k}\exp(-b\sum_{j=k+1}^{n}\gamma_{j})
\eqsp.
\]
Set $u_{n} \eqdef \gamma_{n}^{-p}x_{n}$. Then
\begin{align*}
  u_{n} & =\left(\frac{\gamma_{n-1}}{\gamma_{n}}\right)^{p}\exp(-b\gamma_{n})u_{n-1}+\gamma_{n}e_{n} \\
  & =\exp(p\log(\gamma_{n-1}/\gamma_{n})-b\gamma_{n})u_{n-1}+\gamma_{n}e_{n} \\
  & =(1-b_{n}\gamma_{n})u_{n-1}+b_{n}\gamma_{n}(b_{n}^{-1}e_{n}) \eqsp,
\end{align*}
where $b_{n}\gamma_{n} \eqdef
1-\exp(p\log(\gamma_{n-1}/\gamma_{n})-b\gamma_{n})$.  Observe that $\gamma_{n} b_{n}\sim
1-\exp(-b\gamma_{n})$ in case ({\it i}) and $\gamma_{n} b_{n}\sim
1-\exp(-(b-p/\gamma_{\star})\gamma_{n})$ in case ({\it ii}). Therefore,
$\lim_{n} b_{n}=b$ (resp. $b-p/\gamma_{\star}$) in case ({\it i})
(resp. $({\it ii})$).

Let $v \geq \limsup_{n}b_{n}^{-1}e_{n}$. We have
\[
u_{n}-v=(1-b_{n}\gamma_{n})(u_{n-1}-v)+b_{n}\gamma_{n}(b_{n}^{-1}e_{n}-v)
\]
and upon noting that $(a+b)_{+}\leq a_++b_+$, it holds
\[
\left(u_{n}-v\right)_{+}\leq(1-b_{n}\gamma_{n})(u_{n-1}-v)_{+}+b_{n}\gamma_{n}(b_{n}^{-1}e_{n}-v)_{+}\leq(1-b_{n}\gamma_{n})(u_{n-1}-v)_{+} \eqsp.
\]
Since $\lim_n \gamma_n b_n =0$ and $ \sum_{n}b_{n}\gamma_{n}=+\infty,\ 
\lim_{n}(u_{n}-v)_{+}=0$ thus implying that $ \limsup_{n}u_{n}\leq v$. This
holds for any $v \geq \limsup_{n}b_{n}^{-1}e_{n}$ thus concluding the proof.
\end{proof}

\begin{lemma}
\label{lem:matrices}
For any matrices $A, B, C$
\[
\left| ABA^{T}-CBC^{T} \right|= \left|(A-C)BA^{T}-CB(C-A)^{T} \right|\leq
\left| A-C\right| \ \left| B\right| \ (\left| A\right|+\left| C\right|) \eqsp.
\]
\end{lemma}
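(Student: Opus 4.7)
The plan is to establish the displayed identity by the standard ``add and subtract'' trick, and then obtain the inequality from the triangle inequality together with submultiplicativity of the matrix norm.

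More precisely, I would insert the auxiliary term $CBA^T$ to write
\[
ABA^T - CBC^T = \left(ABA^T - CBA^T\right) + \left(CBA^T - CBC^T\right) = (A-C)BA^T + CB(A^T - C^T).
\]
Since $A^T - C^T = -(C-A)^T$, the second summand equals $-CB(C-A)^T$, which yields the stated identity
\[
ABA^T - CBC^T = (A-C)BA^T - CB(C-A)^T.
\]

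For the inequality, I would apply the triangle inequality $|X+Y| \leq |X| + |Y|$ to the two summands and then use submultiplicativity $|XYZ| \leq |X|\,|Y|\,|Z|$ of the matrix norm, noting that $|(C-A)^T| = |C-A| = |A-C|$ (which holds for the matrix norms implicitly in use throughout the paper, such as the operator or Frobenius norm, all of which are invariant under transpose). This produces
\[
|ABA^T - CBC^T| \leq |A-C|\,|B|\,|A| + |C|\,|B|\,|A-C| = |A-C|\,|B|\,(|A|+|C|),
\]
which is exactly the claim.

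There is no real obstacle here: the lemma is purely algebraic and its proof is a one-line manipulation followed by routine norm estimates. The only point worth mentioning is the implicit assumption on the matrix norm (submultiplicativity and invariance under transpose), which is standard and consistent with the norms used elsewhere in the paper.
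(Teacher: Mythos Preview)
Your argument is correct and is exactly the natural proof: add and subtract $CBA^T$, then apply the triangle inequality and submultiplicativity. The paper itself gives no proof of this lemma (it is stated without a proof environment and immediately followed by the next lemma), so your write-up in fact supplies what the paper leaves implicit.
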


\begin{lemma}
  \label{lem:LyapunovLimit}
  Let $U_{\star}$ be a positive definite matrix.
  \begin{enumerate}[(a)]
  \item Assume C\ref{hyp:C1}\ref{hyp:C1:b}-\ref{hyp:C1:c} and C\ref{hyp:C2}\ref{hyp:C2:a:1}. Consider the equation
\[
v_{n+1}=v_{n}+
\gamma_{n}f(v_{n})+\frac{\gamma_{n}-\gamma_{n+1}}{\gamma_{n+1}}v_{n}+(\gamma_{n+1}-\gamma_{n})U_{\star}+\gamma_{n}\gamma_{n+1}\nabla
h(\theta_{\star})v_{n}\nabla h(\theta_{\star})^{T} \eqsp,
\]
where $f(v) \eqdef U_{\star}+\nabla h(\theta_{\star})v+v\nabla
h(\theta_{\star})^{T}$. Then there exists an unique positive definite matrix
$V$ such that $f(V)=0$ and $\lim_{n}v_{n}=V$.
\item Assume C\ref{hyp:C1}\ref{hyp:C1:b}-\ref{hyp:C1:c} and
  C\ref{hyp:C2}\ref{hyp:C2:a:2}. Consider the equation
\[
v_{n+1}=v_{n}+\gamma_{n}f(v_{n})+(\gamma_{n+1}-\gamma_{n})U_{\star}+\gamma_{n}\gamma_{n+1}\nabla
h(\theta_{\star})v_{n}\nabla h(\theta_{\star})^{T} \eqsp,
\]
where $f(v) \eqdef U_{\star}+\nabla h(\theta_{\star})v+v\nabla
h(\theta_{\star})^{T}+\gamma_{\star}^{-1}v$. Then there exists an unique
positive definite matrix $V$ such that $f(V)=0$ and $ \lim_{n}v_{n}=V$.
  \end{enumerate}
\end{lemma}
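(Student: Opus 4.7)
}
The plan is to identify the fixed point $V$, subtract it, and reduce the study of $w_n := v_n - V$ to the Hurwitz-type Lemmas~\ref{lem:Hurwitz:limit}--\ref{lem:Hurwitz:aver} applied to the Lyapunov operator on matrix space. Existence and uniqueness of $V$ follow from the Lyapunov theorem: in case~(a), $f(V)=0$ is $\nabla h(\theta_\star) V + V\nabla h(\theta_\star)^T = -U_\star$, which under C\ref{hyp:C1}\ref{hyp:C1:c} and the positive definiteness of $U_\star$ has a unique positive definite solution by Horn and Johnson~\cite[Theorem 2.2.3]{horn:johnson:1994}; in case~(b), the same equation appears with $\nabla h(\theta_\star)$ replaced by $\tilde A := \nabla h(\theta_\star) + (2\gamma_\star)^{-1}\Id$, whose spectrum is shifted so that the requirement $\gamma_\star > 1/(2L)$ from C\ref{hyp:C2}\ref{hyp:C2:a:2} is exactly what is needed to keep $\tilde A$ Hurwitz.

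Next I would set $w_n := v_n - V$ and use affinity of $f$ together with $f(V)=0$; after substituting $v_n = V + w_n$ in every remainder carrying an extra $v_n$, both recursions collapse to
\[
w_{n+1} = \bigl(\Id + \gamma_n \mathcal{L} + G_n\bigr)\, w_n + s_n \eqsp,
\]
where $\mathcal{L}(W) = B W + W B^T$ is the Lyapunov operator associated with $B = \nabla h(\theta_\star)$ (case~(a)) or $B = \tilde A$ (case~(b)), the operator $G_n$ gathers the $w_n$-multiplicative remainders, and $s_n$ collects the $w_n$-free pieces of $((\gamma_n-\gamma_{n+1})/\gamma_{n+1}) V$, $(\gamma_{n+1}-\gamma_n) U_\star$, and $\gamma_n\gamma_{n+1}\nabla h(\theta_\star) V \nabla h(\theta_\star)^T$. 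In case~(a), C\ref{hyp:C2}\ref{hyp:C2:a:1} gives $\gamma_n/\gamma_{n+1}-1 = o(\gamma_n)$, while $\gamma_n\gamma_{n+1} = O(\gamma_n^2)$; in case~(b) the first of these remainders is absent and $\gamma_{n+1}-\gamma_n = O(\gamma_n^2)$ by C\ref{hyp:C2}\ref{hyp:C2:a:2}. In either case one reads off $|G_n| = o(\gamma_n)$ and $|s_n| = o(\gamma_n)$.

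After vectorisation, $\mathcal{L}$ becomes a Hurwitz matrix on $\Rset^{d^2}$ whose eigenvalues are $\lambda_i+\bar\lambda_j$ (case~(a)) or $\lambda_i+\bar\lambda_j+\gamma_\star^{-1}$ (case~(b)), for $\lambda_i$ the eigenvalues of $\nabla h(\theta_\star)$; both families lie in a strict left half-plane, the case~(b) shift being controlled precisely by the condition $\gamma_\star>1/(2L)$. Since $G_n/\gamma_n \to 0$, Lemma~\ref{lem:Hurwitz:limit} applies to $\mathcal{L} + G_n/\gamma_n$ and furnishes, for some $L'>0$ and all $1\leq k \leq n$,
\[
\left| \Phi(n,k) \right| \leq C\,\exp\Bigl(-L'\sum_{j=k}^n \gamma_j\Bigr)\eqsp, \qquad \Phi(n,k) \eqdef \prod_{j=k}^n (\Id + \gamma_j \mathcal{L} + G_j)\eqsp.
\]
Iterating the recursion yields $w_{n+1} = \Phi(n+1,1)\,w_0 + \sum_{k=1}^n \Phi(n+1,k+1)\, s_k$: the first term decays exponentially since $\sum_k \gamma_k = \infty$, and the second, bounded by $C\sum_{k=1}^n \gamma_k \exp(-L'\sum_{j=k+1}^{n+1}\gamma_j)\,(|s_k|/\gamma_k)$, is sent to zero by Lemma~\ref{lem:Hurwitz:aver} applied with $p=0$ to the sequence $e_k := |s_k|/\gamma_k \to 0$. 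The main bookkeeping obstacle is, in case~(a), cleanly separating the $w_n$-multiplicative and $w_n$-free parts of $((\gamma_n-\gamma_{n+1})/\gamma_{n+1}) v_n$ and $\gamma_n\gamma_{n+1} \nabla h(\theta_\star) v_n \nabla h(\theta_\star)^T$, and checking that Lemmas~\ref{lem:Hurwitz:limit}--\ref{lem:Hurwitz:aver}, stated for $d\times d$ matrices, transfer verbatim through vectorisation to the Lyapunov operator acting on the space of matrices.
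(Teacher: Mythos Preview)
Your proposal is correct and follows essentially the same route as the paper: subtract the Lyapunov fixed point $V$, vectorise the resulting affine recursion so that the main linear part becomes a Hurwitz operator on $\Rset^{d^2}$, and conclude via Lemmas~\ref{lem:Hurwitz:limit}--\ref{lem:Hurwitz:aver}. The only cosmetic difference is that the paper passes through the scalar inequality $|v_{n+1}-V|\leq(1-\gamma_n L')|v_n-V|+\gamma_n\epsilon_n$ and then invokes the proof technique of Lemma~\ref{lem:Hurwitz:aver} directly, whereas you write out the iterated product $\Phi(n,k)$ explicitly before applying the two lemmas; the substance is the same.
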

\begin{proof}
\textit{(a)} Let $V$ such that $f(V) =0$. We have
\begin{multline*}
  v_{n+1} -V = v_n-V + \gamma_n\left( H(v_n-V) + (v_n-V) H^T \right) \\ +
  \frac{\gamma_n -\gamma_{n+1}}{\gamma_{n+1}} \left(v_n -V \right) + \gamma_n
  \gamma_{n+1} \nabla h(\theta_\star) (v_n-V) \nabla
  h(\theta_\star)^T  \\
  + \frac{\gamma_n -\gamma_{n+1}}{\gamma_{n+1}} V + \gamma_n \gamma_{n+1}
  \nabla h(\theta_\star) V \nabla h(\theta_\star)^T + \left( \gamma_{n+1} -
    \gamma_n \right) U_\star \eqsp.
\end{multline*}
Under C\ref{hyp:C2}\ref{hyp:C2:a:1},
$|\gamma_{n}/\gamma_{n+1}-1|=o(\gamma_{n})$ and
$\gamma_{n}-\gamma_{n+1}=o(\gamma_{n}^{2})$. Then, denoting by $\bar v_{n}$ the
vectorialized form of the matrix $v_n -V$, this yields
\[
\bar v_{n+1} = \left( \Id + \gamma_n A_n\right) \bar v_n + B_n
\]
where $\{A_n, n\geq 0\}$ is a sequence of Hurwitz matrix that converges to a
Hurwitz matrix $A$, and $B_n = o(\gamma_n) $. Then, there exists $L'>0$ such
that
 \[
 | v_{n+1}-V|\leq(1-\gamma_{n}L') | v_{n}-V|+\gamma_{n}\epsilon_{n} \eqsp,
\]
where $\epsilon_{n}=o(1)$. As in the proof of Lemma~\ref{lem:Hurwitz:aver}, it
can be proved that $ \limsup_{n}| v_{n+1}-V|\leq \limsup_{n}\epsilon_{n}=0$.

\textit{(b)} Under C\ref{hyp:C2}\ref{hyp:C2:a:2},
$\frac{\gamma_{n}-\gamma_{n+1}}{\gamma_{n+1}}= \gamma_{n}/\gamma_{\star} +
o(\gamma_n)$ and $\gamma_{n+1}-\gamma_{n}=O(\gamma_{n}^{2})$.  As in the
previous case, we write
\[
v_{n+1}-V =v_n -V + + \gamma_n \tilde H (v_n-V) + \gamma_n (v_n -V) H^T+  o(\gamma_n)
\]
where $\tilde H \eqdef \nabla h(\theta_\star) + (2 \gamma_\star)^{-1} \Id$.
Under the assumptions on $\gamma_\star$, $\tilde H$ is a Hurwitz matrix.  As in
the proof of Lemma~\ref{lem:Hurwitz:aver}, it can be proved that $\limsup_{n}|
v_{n+1}-V|\leq o(1) =0$.
\end{proof}

\begin{lemma}
\label{lem:averaging}
  Define the sequence $\{x_n, n\geq 0\}$ by
\[
x_{n+1} = x_n + \gamma_{n+1} A x_n + \gamma_{n+1} \zeta_{n+1} \eqsp, \qquad x_0
\in \Rset^d \eqsp,
\]
where $\{\gamma_n, n\geq 1\}$ is a positive sequence, $\{\zeta_n, n\geq 1\}$ is
a $\Rset^d$-valued sequence and $A$ is a $d \times d$ matrix. Then
\begin{align*}
  A \sum_{k=0}^n x_k = - \sum_{k=0}^n \zeta_{k+1} +
  \left(\frac{x_{n+1}}{\gamma_{n+1}} - \frac{x_0}{\gamma_1} \right) +
  \sum_{k=1}^n \left( \frac{1}{\gamma_k} -\frac{1}{\gamma_{k+1}}\right) x_k
  \eqsp.
\end{align*}
\end{lemma}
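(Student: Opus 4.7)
The plan is to use the recursion directly to isolate $A x_n$ and then perform Abel summation (summation by parts) on the resulting telescoping expression. This is a purely algebraic identity, so no probabilistic tools are needed.

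First, I would rewrite the defining recursion as
\[
A x_n = \frac{x_{n+1} - x_n}{\gamma_{n+1}} - \zeta_{n+1} \eqsp,
\]
valid for every $n \geq 0$. Summing this identity from $k = 0$ to $k = n$ immediately yields
\[
A \sum_{k=0}^n x_k = \sum_{k=0}^n \frac{x_{k+1} - x_k}{\gamma_{k+1}} - \sum_{k=0}^n \zeta_{k+1} \eqsp,
\]
which already isolates the $\zeta$-term appearing on the right-hand side of the claimed identity.

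The remaining task is to rewrite the telescoping sum $\sum_{k=0}^n (x_{k+1} - x_k)/\gamma_{k+1}$ in the form displayed in the lemma. I would split it as $\sum_{k=0}^n x_{k+1}/\gamma_{k+1} - \sum_{k=0}^n x_k/\gamma_{k+1}$, reindex the first via $j = k+1$ to obtain $\sum_{j=1}^{n+1} x_j/\gamma_j$, peel off the extreme terms $x_{n+1}/\gamma_{n+1}$ and $x_0/\gamma_1$, and then recombine the overlapping ranges $k = 1, \dots, n$. The residual sum collapses to $\sum_{k=1}^n (1/\gamma_k - 1/\gamma_{k+1}) x_k$, which matches the claim exactly.

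There is no real obstacle here: the entire argument is mechanical Abel summation. The only thing to be careful about is keeping the index conventions straight when reindexing so that the boundary terms $x_{n+1}/\gamma_{n+1}$ and $x_0/\gamma_1$ appear with the correct signs, and no assumption on $\{\gamma_n\}$ or on $A$ (invertibility, positivity, etc.) is used at any stage.
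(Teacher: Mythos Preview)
Your proposal is correct and follows essentially the same approach as the paper's own proof: isolate $A x_n = (x_{n+1}-x_n)/\gamma_{n+1} - \zeta_{n+1}$ from the recursion, sum over $k$, and finish with an Abel transform. In fact you spell out the Abel summation step in more detail than the paper does, which simply says ``We then conclude by the Abel transform.''
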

\begin{proof}
  By definition of $\{x_n, n\geq 0 \}$, for any $n \geq 0$ it holds
\[
A x_n = \frac{1}{\gamma_{n+1}} (x_{n+1} -x_n) - \zeta_{n+1} \eqsp.
\]
Therefore,
\[
A \sum_{k=0}^n x_k = \sum_{k=0}^n \frac{1}{\gamma_{k+1}} (x_{k+1} -x_k)
-\sum_{k=0}^n \zeta_{k+1} \eqsp.
\]
We then conclude by the Abel transform.
\end{proof}

  \begin{lemma} \label{lem:AVER3}
    Let $\{\gamma_n, n \geq 1 \}$ be a positive non-increasing sequence. Then
\[
\lim_n n \gamma_n =+\infty \Longrightarrow \lim_n
\frac{1}{\sqrt{n}}\sum_{k=1}^{n}\gamma_{k}^{-1/2}
\left|1-\frac{\gamma_{k}}{\gamma_{k+1}} \right|=0 \eqsp.
\]
  \end{lemma}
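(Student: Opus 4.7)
}
The starting observation is that monotonicity strips the absolute value: since $\gamma_k \ge \gamma_{k+1} > 0$, one has $|1-\gamma_k/\gamma_{k+1}| = (\gamma_k-\gamma_{k+1})/\gamma_{k+1}$. Introducing the non-decreasing sequence $b_k \eqdef 1/\gamma_k$, each summand rewrites as $(b_{k+1}-b_k)/\sqrt{b_k}$, and the strategy is to compare $\sum_{k=1}^n (b_{k+1}-b_k)/\sqrt{b_k}$ to the telescoping sum $\sum_{k=1}^n 2(\sqrt{b_{k+1}}-\sqrt{b_k}) = 2/\sqrt{\gamma_{n+1}} - 2/\sqrt{\gamma_1}$.

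The key algebraic identity I would exploit is
\[
\frac{b_{k+1}-b_k}{\sqrt{b_k}} \;=\; 2\bigl(\sqrt{b_{k+1}}-\sqrt{b_k}\bigr) + \frac{\bigl(\sqrt{b_{k+1}}-\sqrt{b_k}\bigr)^2}{\sqrt{b_k}},
\]
obtained by factoring $b_{k+1}-b_k = (\sqrt{b_{k+1}}-\sqrt{b_k})(\sqrt{b_{k+1}}+\sqrt{b_k})$ and splitting $\sqrt{b_{k+1}}+\sqrt{b_k} = 2\sqrt{b_k}+(\sqrt{b_{k+1}}-\sqrt{b_k})$. Summing over $k \leq n$ yields a leading telescoping piece bounded by $2/\sqrt{\gamma_{n+1}}$ and a nonnegative residual $R_n \eqdef \sum_{k=1}^n (\sqrt{b_{k+1}}-\sqrt{b_k})^2/\sqrt{b_k}$. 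After dividing by $\sqrt{n}$, the leading piece is at most $2/\sqrt{n\gamma_{n+1}}$, which tends to $0$ since $n\gamma_n\to\infty$ and monotonicity give $n\gamma_{n+1} = \tfrac{n}{n+1}(n+1)\gamma_{n+1}\to\infty$.

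The hard part will be controlling $R_n$. Rewriting its $k$-th summand as $(\sqrt{b_{k+1}/b_k}-1)(\sqrt{b_{k+1}}-\sqrt{b_k})$ shows that, as soon as the consecutive ratio $\gamma_k/\gamma_{k+1}$ stays bounded by some constant $M$---which is the case in every step-size family relevant to this paper, for instance $\gamma_n = \gamma_\star n^{-a}$ with $a\in(0,1]$, for which $\gamma_k/\gamma_{k+1}\to 1$---one has $\sqrt{b_{k+1}/b_k}-1 \leq \sqrt{M}-1$, so $R_n$ itself telescopes and is bounded by $(\sqrt{M}-1)(\sqrt{b_{n+1}}-\sqrt{b_1}) \leq (\sqrt{M}-1)/\sqrt{\gamma_{n+1}}$. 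Dividing by $\sqrt{n}$ then gives $R_n/\sqrt{n} \to 0$ by exactly the same argument as for the leading telescoping piece, and summing the two contributions yields the claim at the combined rate $O(1/\sqrt{n\gamma_n})$.
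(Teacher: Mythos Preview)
Your decomposition into a telescoping piece $2(\sqrt{b_{n+1}}-\sqrt{b_1})$ plus the residual $R_n=\sum_k(\sqrt{b_{k+1}}-\sqrt{b_k})^2/\sqrt{b_k}$ is essentially the same strategy the paper uses: after an Abel transform the paper arrives at the same two ingredients (its modified sum telescopes to $1/\sqrt{\gamma_{n+1}}-1/\sqrt{\gamma_1}$, and the difference between the original and modified sums is exactly your $R_n$). The divergence between the two proofs is in how the residual is handled.

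You control $R_n$ under the \emph{extra} hypothesis that the ratios $\gamma_k/\gamma_{k+1}$ stay bounded. This is the right move, because without it the lemma as stated is actually false. For instance, take $\gamma_n=1/j!$ for $n\in(n_{j-1},n_j]$ with $n_j=j\,(j+1)!$: then $\gamma_n$ is positive non-increasing and $n\gamma_n\ge j-1\to\infty$ on the $j$-th block, yet the only nonzero summands occur at $k=n_j$ and equal $j\sqrt{j!}$; evaluating the normalized sum at $n=n_J$ gives
\[
\frac{1}{\sqrt{n_J}}\sum_{k=1}^{n_J}\gamma_k^{-1/2}\Bigl(\frac{\gamma_k}{\gamma_{k+1}}-1\Bigr)\;\ge\;\frac{J\sqrt{J!}}{\sqrt{J(J+1)!}}\;=\;\sqrt{\frac{J}{J+1}}\;\longrightarrow\;1,
\]
so the conclusion fails.

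The paper's own proof slips at exactly the place where you insert the bounded-ratio assumption: it replaces the factor $\gamma_k^{-1}$ in the residual sum by the \emph{smaller} factor $\gamma_k^{-1/2}\gamma_{k-1}^{-1/2}$ and treats this as an upper bound, but since each summand is nonnegative the inequality goes the wrong way. In your notation this amounts to claiming $R_n\le 1/\sqrt{\gamma_1}$, which the example above disproves. Your added hypothesis is precisely what makes $R_n$ controllable, and it holds for every step-size family the paper actually uses (polynomial $\gamma_n\sim\gamma_\star n^{-a}$, where $\gamma_k/\gamma_{k+1}\to 1$), so your argument covers the intended applications even though it does not---and cannot---prove the lemma in the generality stated.
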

  \begin{proof}
    The following proof can be found in the proof of Delyon~\cite[Theorem 26, Chapter
    4]{delyon:2000}. We have
\begin{multline*}
  \sum_{k=1}^{n}\gamma_{k}^{-1/2} \left(\frac{\gamma_{k}}{\gamma_{k+1}} -1
  \right) =\sum_{k=1}^{n}\gamma_{k}^{1/2} \left(\frac{1}{\gamma_{k+1}}
    -\frac{1}{\gamma_{k}} \right) \\
  = -\sum_{k=2}^{n+1} \gamma_k^{-1} \left( \sqrt{\gamma_k} - \sqrt{\gamma_{k-1}}
  \right) - \frac{1}{\sqrt{\gamma_1}} + \frac{1}{\sqrt{\gamma_{n+1}}} \\
  \leq - \sum_{k=2}^{n+1} \gamma_k^{-1/2} \gamma_{k-1}^{-1/2}\left(
    \sqrt{\gamma_k} - \sqrt{\gamma_{k-1}} \right) +
  \frac{1}{\sqrt{\gamma_{n+1}}} = \frac{1}{\sqrt{\gamma_n}} +
  \frac{1}{\sqrt{\gamma_{n+1}}} \eqsp.
\end{multline*}
  \end{proof}


\end{document}